\documentclass
[
%draft,
11pt]
{amsart}%

\usepackage{hyperref}
\usepackage{cite}
\usepackage{amssymb}
\usepackage{amsmath}
\usepackage{amsthm}
\usepackage{amsfonts}
\usepackage{graphicx}
\usepackage{bbm}
\usepackage{xcolor}
\usepackage[toc,page]{appendix}
\usepackage{subfigure}
\usepackage{mathtools}
\usepackage{bm}
\usepackage{comment}

% equations
%

\newcommand{\beq}{\begin{equation}}
\newcommand{\eeq}{\end{equation}}
\newcommand{\bea}{\begin{eqnarray}}
\newcommand{\eea}{\end{eqnarray}}
\newcommand{\beas}{\begin{eqnarray*}}
\newcommand{\eeas}{\end{eqnarray*}}

%
% theorem/proposition/etc.
%
\newtheorem{theorem}{Theorem}[section]

\newtheorem{assumption}[theorem]{Assumption}

\newtheorem{proposition}[theorem]{Proposition}
\newtheorem{prop}[theorem]{Proposition}

\newtheorem{lemma}[theorem]{Lemma}
\newtheorem{remark}[theorem]{Remark}
\newtheorem{example}[theorem]{Example}
\newtheorem{examples}[theorem]{Examples}
\newtheorem{foo}[theorem]{Remarks}

\newtheorem{notation}[theorem]{Notation}

%
% proof environment
%

  % angular brackets for projection

    % round brackets
   % curly brackets
     % edgy brackets

%\newcommand{\p}[1]{{\rm P}\left[#1\right]}

\newcommand{\var}[1]{{\rm Var}\left(#1\right)}

     % Norm
     % absolute value

  % angular brackets for projection

    % round brackets
   % curly brackets
     % edgy brackets

%\newcommand{\p}[1]{{\rm P}\left[#1\right]}

     % Norm
     % absolute value
%\newcommand{\R}[1]{\mathbb{R}}     % absolute value

\newcommand{\p}{\partial}

\newcommand{\R}{\mathbb R}

\newcommand{\masha}[1]{}
\newcommand{\fabrice}[1]{}
\newcommand{\tai}[1]{}

\begin{document}

\title[Quasi-invariance for Kolmogorov diffusions]{Quasi-invariance for infinite-dimensional Kolmogorov diffusions}

\author[Baudoin]{Fabrice Baudoin{$^{\ast}$}}
\thanks{\footnotemark {$\ast$} This research was supported in part by NSF Grant DMS-1901315.}
\address{Department of Mathematics\\
University of Connecticut\\
Storrs, CT 06269, USA} \email{fabrice.baudoin@uconn.edu}

\author[Gordina]{Maria Gordina{$^{\ast\ast}$}}
\thanks{\footnotemark {$\ast\ast$} This research was supported in part by NSF Grant DMS-1954264.}
\address{Department of Mathematics\\
University of Connecticut\\
Storrs, CT 06269, USA} \email{maria.gordina@uconn.edu}

\author[Melcher]{Tai Melcher{$^{\dagger}$}}
\thanks{\footnotemark {$\dagger$} This research was supported in part by NSF Grant DMS-1255574.}
\address{Department of Mathematics\\
University of Virginia \\
Charlottesville, VA 22903, USA} \email{melcher@virginia.edu}

\date{\today \ \emph{File:\jobname{.tex}}}

\begin{abstract}
%We study functional inequalities for the diffusion semigroup generated by Kolmogorov-type operators and use these inequalities to prove quasi-invariance properties for heat kernel measures of Kolmogorov-type diffusions on infinite-dimensional spaces.
We prove Cameron-Martin type quasi-invariance results for the heat kernel measure of infinite-dimensional Kolmogorov and related diffusions. We first study quantitative functional inequalities for appropriate finite-dimensional approximations of these diffusions, and we prove these inequalities hold with dimension-independent coefficients. Applying an approach developed in \cite{BaudoinGordinaMelcher2013,DriverGordina2009, Gordina2017}, these uniform bounds may then be used to prove that the heat kernel measure for certain of these infinite-dimensional diffusions is quasi-invariant under changes of the initial state.
\end{abstract}

\keywords{quasi-invariance, hypoellipticity, Kolmogorov diffusion, Wang's Harnack inequality}
\maketitle
\tableofcontents

\subjclass{Primary 60J60, 28C20; Secondary 35H10}

\renewcommand{\contentsname}{Table of Contents}

\section{Introduction}

Smoothness properties are of classical interest in the study of measures in infinite dimensions. These properties have been a particular focus for measures associated to diffusions, either the measure induced on the path space or the end point distribution (that is, the heat kernel measure) for diffusions taking values in an infinite-dimensional space. In finite dimensions, hypoellipticity of the generator is a standard hypothesis to assume for (or is in some sense equivalent to) regularity properties of the heat kernel measure. It is natural to consider analogues of various hypoelliptic constructions in infinite dimensions, keeping in mind that even the definition of hypoellipticity is not easily available in such a setting.

In \cite{Hormander1967a} L.~H\"ormander put forward $L=\frac{1}{2}\frac{\partial^2}{\partial p^2} + p \frac{\p}{\p\xi}$ on $\R^2$ as the simplest example of a hypoelliptic second order differential operator. This operator generates the standard Kolmogorov diffusion on $\R^2$, a Gaussian process which may be written as a standard real-valued Brownian motion paired with its integral in time
\[
X_t = \left(B_t,\int_0^t B_s\,ds\right).
\]
The operator $L$ had previously been introduced by A.N.\,Kolmogorov in \cite{Kolmogorov1934}, in which he obtained the explicit expression for the (smooth) density of $\mathrm{Law}(X_t)$ with respect to Lebesgue measure
\[
p_t(p,\xi) = \frac{\sqrt{3}}{\pi t^2} \exp\left(-\frac{2p^2}{t}+\frac{6p\xi}{t^2}-\frac{6\xi^2}{t^3}\right).
\]
Even in the finite-dimensional setting, despite having an explicit Gaussian heat kernel, it is challenging to derive typical functional inequalities for the associated semigroup.
\begin{comment}
\masha{not sure what the point of mentioning Hamilton's notes here is. Just can't remember.} We refer for instance to R.\,Hamilton’s notes \cite{Hamilton2011}, where Riccati-type equations are used to prove Li-Yau and parabolic Harnack inequalities.
\end{comment}

In the present paper, we consider a natural infinite-dimensional analogue of the Kolmogorov diffusion as well as a variation of that construction. Note that in the infinite-dimensional setting we can not rely on standard definitions of hypoellipticity. In particular, the heat kernel measure is not a measure with a density with respect to Lebesgue measure anymore. Still we can prove quasi-invariance results to reflect regularity of the heat kernel measure. More specifically, let
$\left( W, H, \mu \right)$ be an abstract Wiener space, and $\{B_t\}_{t \geqslant  0}$ be a Brownian motion on $W$ with covariance structure determined by
\[
\mathbb{E}\left[\langle B_s, h\rangle_H \langle B_t, k\rangle_H\right]
    = \langle h, k \rangle_H \min(s,t),
\]
for all $s,t \geqslant  0$ and $h, k \in W^{\ast}\subset H$. For more details on the setting  we refer to Section \ref{s.infinite}. We define an \emph{infinite-dimensional Kolmogorov diffusion} as the $W \times W$-valued process given by
\[
X_t=\left(B_t , \int_0^t B_s ds \right).
\]
The process $X_t$ is Gaussian with the covariance determined by
\begin{align*}
 & \mathbb{E}\left[\langle X_s,h_1 \otimes h_2 \rangle_{H\otimes H} \langle X_t,k_1 \otimes k_2 \rangle_{H \otimes H}\right] \\
    = &  \langle h_1,k_1 \rangle_H \min(s,t) +\langle h_1,k_2 \rangle_H\int_0^t \min(v,s)dv  +\langle k_1,h_2 \rangle_H \int_0^s \min (u,t) du \\
     & +\langle h_2,k_2 \rangle_H \int_0^t \int_0^s \min(u,v)\, du\,dv  ,
\end{align*}
for all $s,t \geqslant  0$ and $h_1,k_1,h_2,k_2\in W^*$.

For $x\in W\times W$, let $X_t^x$ denote a Kolmogorov diffusion started at $x$, and let $\nu_t^x$ denote its distribution, with $\nu_t^0=:\nu_t$. We will prove that for $ t >0 $ the measure $\nu_t^x$ is equivalent to $\nu_t$ when $x\in H\times H$. That is, for $x=(h,k)$ we have
\[
X_t^{h,k}=X_t+\left(h,k +t h \right),
\]
%{\color{red} Perhaps compare this to the linear shift $X_t+ (h,k)$ for fixed $t$.}
and we will show that $\nu_t^{(h,k)}=\operatorname{Law}(X_t^{(h,k)})$ is mutually absolutely continuous with $\nu_t$ when $(h,k)\in H\times H$. We will also give bounds on the $L^p$ norms of the associated Radon-Nikodym derivatives. We then consider an analogous non-Gaussian process
\begin{equation}\label{e.KolmogrovDiff}
X_t=\left(B_t , \int_0^t F(B_s) ds \right)
\end{equation}
for functions $F$ taking values in finite- or infinite-dimensional spaces, and we prove similar results in this case under the assumption of certain derivative bounds for $F$.

\begin{comment}
\textbf{Two approaches:}
\begin{itemize}
\item Use infinite-dimensional Gaussian analysis and  well-known criteria about equivalences of Gaussian measures in infinite dimension (looks tedious and cumbersome, but has to be done for completeness); check references to make sure it has not been done before.
\item  Use Wang Harnack inequality (more robust).
\end{itemize}
\fabrice{We can then maybe try to turn to the study of more general non-Gaussian Kolmogorov type diffusions

\[
X_t=\left(B_t , \int_0^t f( B_s ) ds \right).
\]
Formally, finite-dimensional computations and dimension independent Wang Harnack inequality seem to work when $f:W \to \mathbb{R}^n$ is a $C^1$-Lipschitz submersion, but infnite-dimensional analysis and related topological issues may be tougher here (worth trying)...
}
\end{comment}

%\subsection*{Discussion}
Before further describing the structure of the paper and the main results, let us mention several papers most relevant to the techniques we are using. In \cite{BaudoinGordinaMariano2020} the first two authors and Ph.~Mariano study gradient bounds and other related functional inequalities for Kolmogorov-type diffusions in finite dimensions via both the generalized $\Gamma$-calculus techniques employed in the present paper as well as through coupling methods. The method we follow for proving quasi-invariance for measures in infinite dimensions via functional inequalities was developed in \cite{BaudoinGordinaMelcher2013,DriverGordina2009, Gordina2017}. This approach relies on certain inequalities holding with dimension-independent coefficients on appropriate finite-dimensional approximations. These methods were successfully applied to infinite-dimensional Heisenberg groups in \cite{DriverGordina2008} in the elliptic case and \cite{BaudoinGordinaMelcher2013} in the hypoelliptic case. A different proof for the hypoelliptic Heisenberg setting was given in \cite{DriverEldredgeMelcher2016}; in that paper, it was in fact proved that the hypoelliptic heat kernel measure satisfied stronger smoothness criteria. The paper \cite{BaudoinFengGordina2019} gives another example of using geometric methods to prove quasi-invariance for path spaces over totally geodesic Riemannian foliations equipped with a sub-Riemannian structure.

Like a diffusion in the Heisenberg group, the Kolmogorov diffusion stands as one of the simplest examples of a hypoelliptic diffusion. Still, given the obstacles which are often unique to each construction of infinite-dimensional diffusions, the proof of dimension-independent estimates typically requires ad hoc methods. In particular, the Kolmogorov diffusion provides an example of an operator satisfying a weak H\"ormander's condition, where the drift is required to generate the full span of the tangent space. This means that techniques recently developed to study diffusions in sub-Riemannian manifolds are not directly applicable.  Also, in the case of the generalized Kolmogorov diffusions we consider, there is no underlying group structure and so a shift in the initial state of the process does not correspond to any group action. Thus the results in the current paper can not be proven by directly applying  the approach developed in \cite{DriverGordina2009, Gordina2017}.

It is additionally worth noting that in this paper we consider a non-Gaussian generalization of the Kolmogorov diffusion, to demonstrate the application of the method to a non-Gaussian diffusion.
%However, there are other generalizations that we could have considered for which one can establish the requisite functional inequalities, using slight modifications of the arguments given in this present paper. This includes the settings considered in \cite{BaudoinGordinaMariano2020} where the first component was a Brownian motion in a complete Riemannian manifold and iterated Kolmogorov diffusions. Both cases would require considerable technical details to be considered in infinite dimensions, and therefore we chose to limit the consideration to a generalized Kolmogorov diffusion described by Equation \eqref{e.KolmogrovDiff}.
%\tai{masha, here please?} \masha{Please check}.
Thus, the method of proof we use is robust and not restricted to Gaussian situations.

The paper is organized as follows. In Section \ref{s.finite}, we establish the necessary functional inequalities in the finite-dimensional settings. In Section \ref{s.infinite}, we construct the processes in infinite dimensions and use results proved in Section \ref{s.finite} to demonstrate the existence of a Radon-Nikodym derivative for the ``shifted'' heat kernel measure with respect to the heat kernel measure started at $0$. Theorem \ref{t.kqi} proves quasi-invariance for the ``standard'' infinite-dimensional Kolmogorov diffusion. Remark \ref{r.CMM} shows how one can prove the quasi-invariance result in this case as an application of the standard Cameron-Martin-Maruyama theorem on path space.
 Theorem \ref{t.gkqi3} gives conditions under which quasi-invariance holds for generalized Kolmogorov diffusions of the form (\ref{e.KolmogrovDiff}) where $F$ takes values in a finite-dimensional vector space. Theorem \ref{t.gkqi4} treats the case where $F$ takes values in $W$. Example \ref{ex.genex} gives some examples of where Theorem \ref{t.gkqi4} may be applied, and revisits the standard case, showing that one may actually get a better upper bound on the Radon-Nikodym derivative than the estimate proved in Theorem \ref{t.kqi}.

\textit{Acknowledgement.} The authors are grateful to Nate Eldredge for several helpful discussions during the writing of this paper.

\section{Wang's Harnack inequality for finite-dimensional diffusions}
\label{s.finite}

We first establish the necessary functional inequalities for diffusions taking values in finite dimensions. We initially treat the case of the standard Kolmogorov diffusion before considering a non-Gaussian generalization of this construction.

\subsection{Kolmogorov diffusions}
\label{s.fk}

Let $X_t=(B_t,\int_0^tB_s ds)$ denote the finite-dimensional Kolmogorov diffusion in $\mathbb{R}^d \times \mathbb{R}^d$, where $B$ is a standard Brownian motion on $\mathbb{R}^d$. We consider functions $f=f \left( p, \xi\right)$ on $\mathbb{R}^d \times \mathbb{R}^{d}$. For $f \in C^{2}\left( \mathbb{R}^d \times \mathbb{R}^{d} \right)$, let
\begin{align*}
	\left( Lf \right) \left( p, \xi \right) &:= \langle p, \nabla_\xi f \left( p, \xi\right) \rangle+\frac{1}{2} \Delta_pf \left( p, \xi \right) \\
	&= \sum_{j=1}^{d} p_{j}\frac{\partial f}{\partial \xi_{j}}\left( p, \xi \right)+\frac{1}{2} \sum_{j=1}^{d} \frac{\partial^2f}{\partial p_j^2} \left( p, \xi \right),
\end{align*}
so $\Delta_p$ is the usual Laplacian on $\mathbb{R}^d$ acting on the variable $p$ and $\nabla_\xi $ is the gradient on $\mathbb{R}^d$ acting on the variable $\xi$. The operator $L$ is hypoelliptic and generates the Markov process $\{X_{t}\}_{t\ge0}$. We let $P_t$ denote the semigroup generated by $L$ and
 $\Gamma\left( f \right)=\frac{1}{2}Lf^2 -fLf $ denote the \emph{carr\'e du champ} operator for $L$. One may show that
\[
\Gamma(f)= \frac{1}{2} \| \nabla_p f \|^2.
\]
The following is \cite[Proposition 2.8]{BaudoinGordinaMariano2020} with $\sigma=1$.

\begin{proposition}[Reverse log Sobolev inequality]\label{p.rlsi}
Let $f \in C^1(\mathbb{R}^d \times \mathbb{R}^{d})$ be a nonnegative bounded function. One has
\begin{multline*}
 \sum_{i=1}^d \left( \frac{\partial \ln P_t f}{\partial p_i } -\frac{1}{2} t \frac{\partial \ln P_t f}{\partial \xi_i } \right)^2 +\frac{t^2}{12}  \left( \frac{\partial \ln P_t f}{\partial \xi_i }\right)^2
\\
 \leqslant \frac{2}{ t P_t f } (P_t (f \ln f)  - P_tf \ln P_t f ).
\end{multline*}
\end{proposition}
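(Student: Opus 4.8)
The plan is to combine the entropy--dissipation identity for $P_t$ with an interpolation along the semigroup, using a generalized $\Gamma$-calculus adapted to the hypoelliptic structure. The whole argument rests on the two commutation relations
\[
[\partial_{p_i}, L] = \partial_{\xi_i}, \qquad [\partial_{\xi_i}, L] = 0,
\]
which follow at once from the explicit form of $L$ and which encode the weak H\"ormander structure (the drift $\sum_j p_j\partial_{\xi_j}$ regenerates the missing $\xi$-directions). Throughout I would assume $f$ is bounded below by a positive constant, so that $h:=\ln P_t f$ is smooth and all manipulations are legitimate; the general nonnegative bounded case follows by applying the inequality to $f+\varepsilon$ and letting $\varepsilon\downarrow 0$.

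First I would record the entropy--dissipation identity. Writing $u=P_{t-s}f$ and $h=\ln u$, differentiating $s\mapsto P_s(u\ln u)$ and using the diffusion property together with $\partial_s u=-Lu$ gives, pointwise,
\[
P_t(f\ln f)-P_t f\,\ln P_t f=\int_0^t P_s\!\left(\frac{\Gamma(P_{t-s}f)}{P_{t-s}f}\right)ds=\int_0^t P_s\big(u\,\Gamma(h)\big)\,ds,
\]
where I used $\Gamma(u)=u^2\Gamma(h)$. Since $\Gamma(h)=\tfrac12\sum_i(\partial_{p_i}h)^2$, the right side is, up to the factor $\tfrac2t$, exactly the target bound, so it remains to bound $\int_0^t P_s\big(u\sum_i(\partial_{p_i}h)^2\big)\,ds$ from below by the left-hand quadratic form.

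The heart of the matter is an interpolation functional. Set $A=\sum_i(\partial_{p_i}h)^2$, $M=\sum_i\partial_{p_i}h\,\partial_{\xi_i}h$, $V=\sum_i(\partial_{\xi_i}h)^2$, and define $\Phi(s)=P_s\big(u\,\Omega_{t-s}\big)$ with $\Omega_\tau=a(\tau)A+b(\tau)M+c(\tau)V$, where $a,b,c$ vanish at $\tau=0$. Using $\frac{d}{ds}P_s(uG)=P_s\big(u[LG+\sum_j\partial_{p_j}h\,\partial_{p_j}G+\partial_s G]\big)$ (valid since $\partial_{p_j}u=u\,\partial_{p_j}h$), the computation reduces to applying the operator $\mathcal D_0 G:=LG+\sum_j\partial_{p_j}h\,\partial_{p_j}G+\partial_s G$ (with $\partial_s h=-(Lh+\Gamma(h))$) to $A$, $M$, $V$. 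Here the commutation relations enter: a lengthy but clean calculation, in which all cubic terms (first derivative times Hessian of $h$) cancel, yields
\[
\mathcal D_0 A=\|\mathrm{Hess}_{pp}h\|^2-2M,\quad \mathcal D_0 M=\langle\mathrm{Hess}_{pp}h,\mathrm{Hess}_{p\xi}h\rangle-V,\quad \mathcal D_0 V=\|\mathrm{Hess}_{p\xi}h\|^2,
\]
where $\mathrm{Hess}_{pp}h=(\partial_{p_k}\partial_{p_i}h)$, $\mathrm{Hess}_{p\xi}h=(\partial_{p_k}\partial_{\xi_i}h)$ and $\langle\cdot,\cdot\rangle,\|\cdot\|$ are the Hilbert--Schmidt product and norm. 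Collecting terms, and writing $'=d/d\tau$,
\[
\frac{d}{ds}\Phi(s)=P_s\Big(u\big[\mathcal Q_H-a'A+(-2a-b')M+(-b-c')V\big]\Big),\quad \mathcal Q_H=a\|\mathrm{Hess}_{pp}h\|^2+b\langle\mathrm{Hess}_{pp}h,\mathrm{Hess}_{p\xi}h\rangle+c\|\mathrm{Hess}_{p\xi}h\|^2.
\]

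Finally I would make the decisive choice $a(\tau)=\tfrac{\tau}{t}$, $b(\tau)=-\tfrac{\tau^2}{t}$, $c(\tau)=\tfrac{\tau^3}{3t}$. This single choice does three things at once: it satisfies $a(t)=1,b(t)=-t,c(t)=\tfrac{t^2}{3}$, which are exactly the coefficients in $\sum_i(\partial_{p_i}h-\tfrac{t}{2}\partial_{\xi_i}h)^2+\tfrac{t^2}{12}\sum_i(\partial_{\xi_i}h)^2=A-tM+\tfrac{t^2}{3}V$; it kills the $M$ and $V$ first-order coefficients, since $b'=-2a$ and $c'=-b$; and it keeps $\mathcal Q_H\geqslant 0$, because $a,c\geqslant 0$ and $b^2=\tau^4/t^2\leqslant \tfrac43\tau^4/t^2=4ac$. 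With this choice $\frac{d}{ds}\Phi=P_s(u\mathcal Q_H)-\tfrac1t P_s(uA)\geqslant -\tfrac1t P_s(uA)$. Integrating over $[0,t]$, using $\Phi(t)=0$ (as $\Omega_0\equiv 0$) and $\Phi(0)=P_t f\cdot(A-tM+\tfrac{t^2}{3}V)\big|_{h=\ln P_t f}$, gives
\[
P_t f\Big(A-tM+\tfrac{t^2}{3}V\Big)\leqslant \frac1t\int_0^t P_s(uA)\,ds=\frac{2}{t}\big(P_t(f\ln f)-P_t f\ln P_t f\big),
\]
which is the claim after dividing by $P_t f>0$. The main obstacle is the computation of $\mathcal D_0$ on $A$, $M$, $V$ and the verification that all cubic terms cancel: this is where the coupling $[\partial_{p_i},L]=\partial_{\xi_i}$ produces the cascade $A\to M\to V$ responsible for the powers of $t$. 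Once those three identities are established, the choice of $a,b,c$ and the positivity $b^2\leqslant 4ac$ are the only remaining nontrivial checks.
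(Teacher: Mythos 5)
Your proof is correct, and it is essentially the same argument the paper relies on: your functional $\Phi(s)=P_s\bigl(u\,\Omega_{t-s}\bigr)$ equals $\tfrac1t\,(t-s)\,P_s\bigl((P_{t-s}f)\,\Gamma^{\alpha(s),\beta(s)}(\ln P_{t-s}f)\bigr)$ with $\alpha(s)=\tfrac12(t-s)$ and $\beta(s)=\tfrac1{12}(t-s)^2$, which is precisely the interpolation functional used in the paper's proof of the generalized reverse log Sobolev inequality (Proposition \ref{reverse log-sob}); the paper defers Proposition \ref{p.rlsi} itself to the citation \cite{BaudoinGordinaMariano2020}, where the same technique is applied. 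Your cascade identities for $\mathcal{D}_0$ acting on $A$, $M$, $V$, together with the kill-off of the first-order coefficients and the discriminant condition $b^2\leqslant 4ac$, are exactly the expanded-coefficient form of the paper's $\Gamma_2^{\alpha,\beta}$ lower bound obtained by completing squares (Proposition \ref{p.key} with $m=M=1$), so the two proofs differ only in bookkeeping, not in substance.
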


The fact that reverse log Sobolev inequalities imply the Wang-type Harnack inequalities for general Markov operators is by now well-known (see for example \cite[Proposition 3.4]{BaudoinBonnefont2012} and in the infinite-dimensional context \cite[Propositions 2.10 and 4.9]{BaudoinGordinaMelcher2013}). We deduce then the following statement.

\begin{theorem}[Wang-type Harnack inequality]\label{t.wang}
Suppose $f$ be a nonnegative bounded Borel function on $\mathbb{R}^d \times \mathbb{R}^{d}$, then for every $t>0$, $(p,\xi),(p',\xi') \in \mathbb{R}^d \times \mathbb{R}$ and $\alpha\in(1,\infty)$
\begin{align*}
& (P_t f)^\alpha (p,\xi)
\\
& \leqslant (P_t f^\alpha )(p',\xi') \exp \left( \frac{3}{4-\sqrt{13}}  \frac{\alpha}{\alpha -1} \left( \frac{\| p-p'\|^2}{t} +\frac{\| \xi-\xi'\|^2}{t^3} \right) \right).
\end{align*}
\end{theorem}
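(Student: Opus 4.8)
The plan is to derive the Harnack inequality from the reverse log-Sobolev inequality of Proposition \ref{p.rlsi} by the interpolation method of F.-Y.\ Wang. First I would reduce to the case where $f$ is smooth, bounded, and bounded below by a positive constant, so that $P_t f^\beta > 0$ and $\ln P_t f^\beta$ is smooth in $(p,\xi)$ for $t>0$ by hypoellipticity of $L$; the general nonnegative bounded Borel case is then recovered at the end by a routine monotone approximation. Fix $x = (p,\xi)$, $y = (p',\xi')$ and $\alpha\in(1,\infty)$, and interpolate simultaneously in the base point and in the exponent: take the straight-line path $\gamma(s) = (1-s)x + sy$ and $\beta(s) = 1 + s(\alpha-1)$, and set
\[
\psi(s) := \frac{1}{\beta(s)}\ln\left(P_t f^{\beta(s)}\right)(\gamma(s)), \qquad s\in[0,1],
\]
so that $\psi(0) = \ln P_t f(x)$ and $\psi(1) = \tfrac{1}{\alpha}\ln P_t f^\alpha(y)$. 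The target is exactly $\alpha\bigl(\psi(0)-\psi(1)\bigr) \le (\text{exponent})$, so everything reduces to a lower bound on $\int_0^1\psi'(s)\,ds$.

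Differentiating and writing $g = f^{\beta(s)}$, I expect to obtain
\[
\psi'(s) = \frac{\beta'}{\beta^2}\,\mathcal{E}(s) + \frac{1}{\beta}\left\langle \nabla\ln P_t g,\gamma'\right\rangle, \qquad \mathcal{E}(s) := \frac{P_t(g\ln g)}{P_t g} - \ln P_t g,
\]
the two contributions coming from differentiating the exponent $\beta(s)$ and the base point $\gamma(s)$, respectively. The point is that $\mathcal{E}\ge 0$ is precisely the entropy on the right of Proposition \ref{p.rlsi}, so that inequality reads $G \le \tfrac{2}{t}\mathcal{E}$ with $G := \sum_i\bigl[(\partial_{p_i}\ln P_t g - \tfrac t2\partial_{\xi_i}\ln P_t g)^2 + \tfrac{t^2}{12}(\partial_{\xi_i}\ln P_t g)^2\bigr]$. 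Setting $a_i = \partial_{p_i}\ln P_t g - \tfrac t2\partial_{\xi_i}\ln P_t g$, $b_i = \partial_{\xi_i}\ln P_t g$, and $c_i = \dot\gamma_{p,i}$, $d_i = \tfrac t2\dot\gamma_{p,i} + \dot\gamma_{\xi,i}$, one rewrites $\langle\nabla\ln P_t g,\gamma'\rangle = \sum_i(a_ic_i + b_id_i)$ and applies Cauchy-Schwarz with weights matched to $G$, giving $|\langle\nabla\ln P_t g,\gamma'\rangle| \le \sqrt{G\,K}$ with $K := \sum_i(c_i^2 + \tfrac{12}{t^2}d_i^2)$. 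Combining with $G\le\tfrac2t\mathcal{E}$ yields $\psi'(s)\ge \tfrac{\beta'}{\beta^2}\mathcal{E} - \tfrac{1}{\beta}\sqrt{\tfrac{2K\mathcal{E}}{t}}$, and minimizing the right-hand side over $\mathcal{E}\ge 0$ (a scalar quadratic in $\sqrt{\mathcal{E}}$) gives the pointwise bound $\psi'(s) \ge -\tfrac{K}{2t\beta'}$.

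Integrating and using that for the straight line $\gamma'$ is constant, so $K = 4\|\Delta p\|^2 + \tfrac{12}{t}\langle\Delta p,\Delta\xi\rangle + \tfrac{12}{t^2}\|\Delta\xi\|^2$ is constant (with $\Delta p = p'-p$, $\Delta\xi = \xi'-\xi$) while $\beta' = \alpha - 1$, I obtain $\psi(0)-\psi(1)\le \tfrac{K}{2t(\alpha-1)}$, hence
\[
(P_t f)^\alpha(x) \le (P_t f^\alpha)(y)\,\exp\left(\frac{\alpha}{\alpha-1}\,\frac{K}{2t}\right).
\]
It remains to bound $\tfrac{K}{2t}$ by the stated expression: writing $u = \|\Delta p\|/\sqrt t$ and $v = \|\Delta\xi\|/t^{3/2}$ one has $\tfrac{K}{2t} \le 2u^2 + 6uv + 6v^2$, and the sharp estimate $2u^2 + 6uv + 6v^2\le(4+\sqrt{13})(u^2+v^2)$, where $4+\sqrt{13}$ is the top eigenvalue of $\left(\begin{smallmatrix}2&3\\3&6\end{smallmatrix}\right)$, gives the constant since $4+\sqrt{13} = \tfrac{3}{4-\sqrt{13}}$. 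I expect the main obstacles to be (i) correctly matching the Cauchy-Schwarz weights to the anisotropic quadratic form in Proposition \ref{p.rlsi}, which is what couples the $p$- and $\xi$-directions with the right powers of $t$, and (ii) the final optimization producing the sharp constant; the differentiation of $\psi$ and the approximation argument are routine.
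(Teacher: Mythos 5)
Your proof is correct, and it rests on the same two pillars as the paper's: the reverse log-Sobolev inequality of Proposition \ref{p.rlsi} and Wang's semigroup interpolation. The difference is in where the loss of sharpness is taken, and your version is more self-contained. The paper first applies the elementary completing-the-square inequality at the level of the quadratic form, bounding the anisotropic expression below by $\frac{4-\sqrt{13}}{6}\left(\|\nabla_p \ln P_t f\|^2 + t^2\|\nabla_\xi \ln P_t f\|^2\right)$, and then simply cites the integration argument of \cite[Proposition 3.4]{BaudoinBonnefont2012} for the isotropic metric. You instead carry the anisotropic form through the interpolation itself, using Cauchy-Schwarz with weights matched to $G$, which yields the sharper intermediate Harnack inequality with exponent $\frac{\alpha}{\alpha-1}\frac{K}{2t}$, where $\frac{K}{2t} = \frac{2\|\Delta p\|^2}{t} + \frac{6\langle \Delta p,\Delta\xi\rangle}{t^2} + \frac{6\|\Delta\xi\|^2}{t^3}$ is exactly the squared control distance of $\Gamma^{t/2,\,t^2/12}$; only then do you relax to the stated isotropic bound via the top eigenvalue of $\left(\begin{smallmatrix}2&3\\3&6\end{smallmatrix}\right)$. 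This is in fact the route the paper itself follows in the generalized setting (Theorem \ref{t.whi} via Lemma \ref{l.2.4}), where the control distance with its cross term is kept explicit. That both orderings produce the identical constant is no accident: your $4+\sqrt{13}$ and the paper's $\frac{6}{4-\sqrt{13}}$ (up to the factor of $2$ in normalization) solve dual eigenvalue problems for the form and its inverse, and $\frac{3}{4-\sqrt{13}} = 4+\sqrt{13}$. What your route buys is a self-contained proof (the interpolation is written out rather than cited) and a strictly stronger intermediate inequality retaining the cross term $\langle \Delta p, \Delta\xi\rangle$; what the paper's route buys is brevity, since after the one-line quadratic-form estimate the remaining work is a verbatim appeal to the literature.
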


\begin{proof}
Note first that, for any $a, b\in \R$, we have the elementary inequality
\begin{multline*}
 \left( a- \frac{1}{2}tb \right)^{2}+\frac{t^{2}}{12}b^{2}=a^{2}-atb+\frac{t^{2}b^{2}}{3}
\\
\geqslant a^{2}-\frac{\nu a^{2}}{2}-\frac{t^{2}b^{2}}{2\nu}+\frac{t^{2}b^{2}}{3}=\left( 1-\frac{\nu}{2}\right)a^{2}
+ \left( \frac{1}{3}-\frac{1}{2\nu}\right)t^{2}b^{2}
\end{multline*}
for any $\nu>0$. Setting the coefficients equal we get the minimum at
\[
1-\frac{\nu}{2}= \frac{1}{3}-\frac{1}{2\nu}=\frac{4-\sqrt{13}}{6}.
\]
Thus, we have that
\begin{multline*}
\sum_{i=1}^d \left( \frac{\partial \ln P_t f}{\partial p_i } -\frac{1}{2} t \frac{\partial \ln P_t f}{\partial \xi_i } \right)^2 +\frac{t^2}{12}  \left( \frac{\partial \ln P_t f}{\partial \xi_i }\right)^2 \\
\geqslant \frac{4-\sqrt{13}}{6} \left( \|\nabla_p \ln P_t f \|^2+ t^2 \|\nabla_\xi \ln P_t f \|^2  \right).
 \end{multline*}
Therefore, the reverse log Sobolev inequality in Proposition \ref{p.rlsi} implies that
\[
 \frac{4-\sqrt{13}}{6} \left( \|\nabla_p \ln P_t f \|^2+ t^2 \|\nabla_\xi \ln P_t f \|^2  \right) \leqslant \frac{2}{ t P_t f } (P_t (f \ln f)  -P_tf \ln P_t f ).
 \]
One may now integrate this inequality as in the proof of \cite[Proposition 3.4]{BaudoinBonnefont2012} or \cite[Proposition 2.10]{BaudoinGordinaMelcher2013} to get the desired result.
\end{proof}

\iffalse
\begin{align*}
	\left(a-\frac{1}{2}b\right)^2 + \frac{1}{12} b^2
	 &= a^2 -ab + \frac{1}{3}b^2 \\
	 &\geqslant  a^2 -\frac{\lambda a^2}{2} - \frac{b^2}{2\lambda} + \frac{1}{3}b^2 \\
	 &= \left(1-\frac{\lambda}{2}\right)a^2 + \left(\frac{1}{3} - \frac{1}{2\lambda}\right)b^2
\end{align*}
for any $\lambda>0$. This inequality is optimized by
\[ \max_{\lambda>0} \min\left\{1-\frac{\lambda}{2}, \frac{1}{3} - \frac{1}{2\lambda}\right\}
	= \frac{4-\sqrt{13}}{6}, \]
where $1-\frac{\lambda}{2} = \frac{1}{3} - \frac{1}{2\lambda}$.

\tai{ I don't see where the extra factor of $\alpha$ should show up (cf. proof of Prop 2.10 in \cite{BaudoinGordinaMelcher2013}). Should this be
\begin{equation}
\label{e.wang2}
(P_t f)^\alpha (p,\xi) \leqslant (P_t f^\alpha )(p',\xi') \exp \left( \frac{3}{4-\sqrt{13}}  \frac{1}{\alpha -1} \left( \frac{\| p-p'\|^2}{t} +\frac{\| \xi-\xi'\|^2}{t^3} \right) \right)?
\end{equation}
}
\masha{there it came from $b\left( 1 \right)=p$, or our $\alpha$.}
\fi

It is known that Wang-type Harnack inequalities as in Theorem \ref{t.wang} are equivalent to certain $L^q$ estimates for the heat kernel of the diffusion. More specifically, for $t>0$, we denote by $p_t(\cdot,\cdot):\mathbb{R}^{2d}\times\mathbb{R}^{2d}\rightarrow\mathbb{R}$ the fundamental solution for the Kolmogorov operator on $\mathbb{R}^{2d}$, and also when appropriate $p_t(\cdot)=p_t(0,\cdot):\mathbb{R}^{2d}\rightarrow \mathbb{R}$. The following result is an immediate consequence of Theorem \ref{t.wang}; see for example  \cite[Lemma 2.11]{BaudoinGordinaMelcher2013} taking $q=1/(\alpha-1)$ for $\alpha\in(1, 2)$ and $y=0$.

\begin{prop}[Integrated Harnack inequality]\label{p.intharnack}
For any $t>0$, $(p,\xi)\in\mathbb{R}^d \times \mathbb{R}^{d}$, and $q\in(1,\infty)$,
\begin{multline*}
\left(\int_{\mathbb{R}^d\times\mathbb{R}^d} \left[\frac{p_t((p,\xi),(p',\xi'))}{p_t(p',\xi')}\right]^{q} p_t(p',\xi')\,dp'\,d\xi' \right)^{1/q} \\
	\leqslant \exp \left( \frac{3(1+q)}{4-\sqrt{13}}   \left( \frac{\| p\|^2}{t} +\frac{\| \xi\|^2}{t^3} \right) \right)
\end{multline*}
\end{prop}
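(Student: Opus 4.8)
The plan is to read the integrated estimate off the pointwise Wang--Harnack inequality of Theorem~\ref{t.wang}, through the standard equivalence between Wang-type Harnack inequalities and $L^q$ bounds on heat kernels. Write $x=(p,\xi)$ for the shifted initial point, keep $0$ as the base point, and put
\[
\Phi(z):=\frac{p_t(x,z)}{p_t(0,z)},\qquad \nu_t(dz):=p_t(0,z)\,dz,
\]
so that the left-hand side of the proposition is exactly $\|\Phi\|_{L^q(\nu_t)}$. Because in finite dimensions $p_t(x,\cdot)$ and $p_t(0,\cdot)$ are the explicit smooth and strictly positive Gaussian densities attached to $L$, the function $\Phi$ is a genuine everywhere-positive Radon--Nikodym derivative and $\nu_t$ is a probability measure, so no integrability needs to be assumed in advance.

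Following the hint I take $\alpha\in(1,2)$ with $q=1/(\alpha-1)$, equivalently $\alpha=(q+1)/q$, which is precisely the H\"older conjugate of $q+1$. For a nonnegative bounded Borel function $\psi$ one has $P_t\psi(x)=\int \psi\,p_t(x,\cdot)\,dz=\int \psi\,\Phi\,d\nu_t$ and $P_t(\psi^\alpha)(0)=\int \psi^\alpha\,d\nu_t=\|\psi\|_{L^\alpha(\nu_t)}^{\alpha}$, so taking $\alpha$-th roots in Theorem~\ref{t.wang} with base point $0$ gives
\[
\int \psi\,\Phi\,d\nu_t=P_t\psi(x)\le \|\psi\|_{L^\alpha(\nu_t)}\exp\left(\frac{3}{4-\sqrt{13}}\,\frac{1}{\alpha-1}\left(\frac{\|p\|^2}{t}+\frac{\|\xi\|^2}{t^3}\right)\right).
\]
Taking the supremum over all such $\psi$ with $\|\psi\|_{L^\alpha(\nu_t)}\le 1$ and using the Riesz duality characterization of the $L^{q+1}$-norm (the conjugate exponent of $L^\alpha$), valid for nonnegative measurable $\Phi$ with values in $[0,\infty]$, yields $\|\Phi\|_{L^{q+1}(\nu_t)}$ bounded by the same exponential. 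Since $\tfrac{1}{\alpha-1}=q\le q+1$ and, $\nu_t$ being a probability measure, $\|\Phi\|_{L^q(\nu_t)}\le\|\Phi\|_{L^{q+1}(\nu_t)}$, the stated inequality follows; the displayed constant $3(1+q)/(4-\sqrt{13})$ is just the Wang factor $\frac{3}{4-\sqrt{13}}\frac{\alpha}{\alpha-1}$ with $\frac{\alpha}{\alpha-1}=q+1$. Equivalently, one may substitute $f=\Phi^{q}$ directly into the integrated form of Theorem~\ref{t.wang}: the identity $q\alpha=q+1$ then makes both sides the same moment $\int\Phi^{q+1}\,d\nu_t$ and the inequality collapses to the same bound, which is the computation of \cite[Lemma~2.11]{BaudoinGordinaMelcher2013}.

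The argument is essentially bookkeeping, so I expect no real obstacle; the only points needing care are (i) that Theorem~\ref{t.wang} is stated for bounded $f$, which is why I run the duality over bounded nonnegative $\psi$ and pass to the general density by monotone convergence rather than plugging in the (possibly unbounded) extremizer directly, and (ii) correctly matching the exponent relationship $q=1/(\alpha-1)$ so that the Wang factor $\frac{\alpha}{\alpha-1}$ produces the claimed $1+q$. The substantive content lies entirely in Theorem~\ref{t.wang}: since its constant $\frac{3}{4-\sqrt{13}}\frac{\alpha}{\alpha-1}$ is independent of the dimension $d$, so is the bound obtained here, which is exactly the feature that will let the finite-dimensional estimates survive the limit in Section~\ref{s.infinite}.
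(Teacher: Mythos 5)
Your proposal is correct and takes essentially the same route as the paper: the paper obtains Proposition \ref{p.intharnack} as an immediate consequence of Theorem \ref{t.wang} via the standard equivalence between Wang-type Harnack inequalities and $L^q$ heat-kernel bounds (citing \cite[Lemma 2.11]{BaudoinGordinaMelcher2013} with $q=1/(\alpha-1)$, $\alpha\in(1,2)$, $y=0$), which is exactly the H\"older-duality computation you write out explicitly. Your bookkeeping in fact yields the slightly stronger estimate $\|\Phi\|_{L^{q+1}(\nu_t)}\leqslant \exp\left(\tfrac{3q}{4-\sqrt{13}}\left(\tfrac{\|p\|^2}{t}+\tfrac{\|\xi\|^2}{t^3}\right)\right)$, from which the stated inequality follows by monotonicity of $L^p$ norms under the probability measure $\nu_t$ and of the constant in $q$.
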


\subsection{Generalized Kolmogorov diffusions}

We now consider the non-Gaussian diffusions
\[
Y_t=\left(B_t , \int_0^t F( B_s ) ds \right),
\]
where $F: \R^d\rightarrow\mathbb{R}^r$ is a $C^1$-function.
\begin{comment}-Lipschitz submersion (that is,
\[ |F(x)-F(y)|\leqslant C\|x-y\|_W \]
and
$dF: \mathrm{ker}(dF)^\perp\rightarrow\mathbb{R}^N$ is an isometry.)
\tai{ does this mean that, for the finite-dimensional approximations, we require that $dF|_{\mathbb{R}^d}: \mathrm{ker}(dF|_{\mathbb{R}^d})^\perp\rightarrow\mathbb{R}^N$ is an isometry? is this automatic? }
The generator of $X_t$ is given by
\[ \mathcal{L}=\Delta_p + F(p)\cdot\nabla_\xi. \]
Perhaps more generally one can treat the case
\[ X_t=\left(Y_t , \int_0^t F( Y_s ) ds \right) \]
where $Y$ is a Markov diffusion in $\mathbb{R}^d$ with generator
\[ L = \sum_{i=1}^d V_i^2 + V_0 \]
where $V_i$ for $i=0,1,\ldots,d$ are smooth vector fields, and $F$ satisfies the Lipschitz-type condition
\[ \sum_{i=1}^d\sum_{j=1}^N (V_iF_j)^2 <\infty. \]
\end{comment}
Similar diffusions were considered in \cite{BaudoinGordinaMariano2020}. These processes have infinitesimal generator
\[
L=\frac{1}{2}\Delta_p+F(p) \cdot\frac{\partial}{\partial \xi}.
\]

\subsubsection{$F:\R^d\to\R$}
We initially restrict our attention to the case $r=1$. In this case, for each $\alpha,\beta \geqslant 0$, we define a first-order differential symmetric bilinear form $\Gamma^{\alpha,\beta}: C^{\infty}\left( \mathbb{R}^d \times \mathbb{R} \right) \times C^{\infty}\left( \mathbb{R}^d \times \mathbb{R} \right) \rightarrow \mathbb{R}$ associated to the diffusion operator $L$ by
\begin{equation*}
	\Gamma^{\alpha,\beta} (f,g):=\sum_{i=1}^d \left( \frac{\partial f}{\partial p_i } -\alpha \frac{\partial f}{\partial \xi } \right)\left( \frac{\partial g}{\partial p_i } -\alpha \frac{\partial g}{\partial \xi } \right) +\beta  \left( \frac{\partial f}{\partial \xi }\right)\left( \frac{\partial g}{\partial \xi }\right).
\end{equation*}
In particular,
\begin{equation*}
	\Gamma^{\alpha,\beta} (f):=\Gamma^{\alpha,\beta} (f,f)=\sum_{i=1}^d \left( \frac{\partial f}{\partial p_i } -\alpha \frac{\partial f}{\partial \xi } \right)^2 +\beta \left( \frac{\partial f}{\partial \xi }\right)^2.
\end{equation*}
Note that the \emph{carr\'e du champ} operator of $L$ is given by
\[
\Gamma(f)=\sum_{i=1}^d \left( \frac{\partial f}{\partial p_i }  \right)^2=\Gamma^{0,0} (f).
\]
We also consider
\begin{equation*}
\Gamma_2^{\alpha,\beta} (f):= \frac{1}{2} L \Gamma^{\alpha,\beta} (f) -\Gamma^{\alpha,\beta} (f,Lf).
\end{equation*}
Recall that the \emph{intrinsic (control) distance} associated to $\Gamma^{\alpha,\beta}$ is defined as
\begin{multline*}
 d_{\alpha,\beta} ( (p,\xi), (p^{\prime},\xi^{\prime}))
\\
: =\sup \left\{ f(p,\xi) - f(p^{\prime},\xi^{\prime}): \, f \in C^{\infty}\left( \mathbb{R}^d \times \mathbb{R} \right), \,   \Gamma^{\alpha,\beta} (f) \leqslant  1\right\}.
\end{multline*}
The control distance may be computed explicitly as follows.

\begin{lemma}\label{l.2.4} For $\alpha\geqslant 0$ and $\beta>0$,
\[
d_{\alpha,\beta} ( (p,\xi), (p',\xi'))^2=\frac{1}{\beta} \left( \alpha \sum_{i=1}^d (p'_i-p_i) +\xi'-\xi  \right)^2+\sum_{i=1}^d (p_i'-p_i)^2.
\]
\end{lemma}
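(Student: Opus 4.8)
The plan is to compute the control distance by recognizing that the supremum defining $d_{\alpha,\beta}$ is a constrained optimization problem that reduces to a linear-algebraic eigenvalue computation. First I would observe that $\Gamma^{\alpha,\beta}(f)$ is a positive-definite quadratic form in the gradient $(\nabla_p f, \partial_\xi f) \in \mathbb{R}^{d+1}$, so the constraint $\Gamma^{\alpha,\beta}(f) \leqslant 1$ is equivalent to requiring the gradient of $f$ to lie in an ellipsoid determined by the symmetric matrix associated to this form. The intrinsic distance associated to such a Riemannian (in fact flat) metric is then the geodesic distance for the dual metric, which for a constant-coefficient form is simply the norm of the displacement vector measured in the inverse quadratic form.

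The key steps, in order, are as follows. I would write $\Gamma^{\alpha,\beta}(f) = \langle A \nabla f, \nabla f \rangle$ where $A$ is the $(d+1)\times(d+1)$ symmetric matrix encoding the form, and $\nabla f = (\partial_{p_1} f, \dots, \partial_{p_d} f, \partial_\xi f)$. Next, using the fact that for a constant flat metric the control distance between two points equals the $A^{-1}$-norm of their difference, I would reduce the problem to computing
\[
d_{\alpha,\beta}((p,\xi),(p',\xi'))^2 = \langle A^{-1} v, v \rangle,
\quad v = (p'-p, \xi'-\xi).
\]
Then I would compute $A^{-1}$ explicitly by inverting the matrix, or equivalently solve the geodesic/optimization directly: maximize a linear functional $\langle c, \nabla f \rangle$ subject to the ellipsoid constraint $\langle A \nabla f, \nabla f\rangle \leqslant 1$, whose optimal value is $\sqrt{\langle A^{-1} c, c\rangle}$. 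Carrying out the inversion and matching to the stated formula would complete the identity.

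Alternatively, and perhaps more transparently, I would solve the problem directly by a change of variables. Setting $u_i = \partial_{p_i} f - \alpha\, \partial_\xi f$ and $w = \partial_\xi f$, the constraint becomes $\sum_i u_i^2 + \beta w^2 \leqslant 1$, a standard Euclidean ball in the coordinates $(u, \sqrt{\beta}\, w)$, so the $u$ and $w$ variables decouple completely. I would then express the increment $f(p,\xi) - f(p',\xi')$ along a straight-line path as a line integral of $\langle \nabla f, v\rangle$ and rewrite this pairing in terms of $u$ and $w$: since $\partial_{p_i} f = u_i + \alpha w$, the pairing becomes a linear combination whose coefficients pick out $\sum_i(p_i - p_i')u_i$ and $\big(\alpha\sum_i(p_i-p_i') + (\xi-\xi')\big)w$. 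Maximizing this over the decoupled ball by Cauchy–Schwarz yields exactly the two terms in the claimed formula, with the factor $1/\beta$ arising from the rescaling of $w$.

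The main obstacle I anticipate is justifying that the supremum over smooth $f$ with the pointwise constraint $\Gamma^{\alpha,\beta}(f) \leqslant 1$ is actually achieved by (or approximated by) the linear test functions that the Cauchy–Schwarz argument produces, i.e. that the intrinsic distance for this sub-Riemannian-looking form coincides with the naive linear-algebra answer rather than being strictly smaller or larger. Because $\Gamma^{\alpha,\beta}$ has constant coefficients and $\beta > 0$ makes the form non-degenerate (genuinely elliptic in all $d+1$ directions), the associated metric is flat and geodesics are straight lines, so the pointwise constraint integrates cleanly to a global bound along any path; the subtlety is merely to confirm that linear functions $f(p,\xi) = \langle a, p\rangle + b\xi$ saturate the constraint and realize the optimum, which should follow since for such $f$ the constraint $\Gamma^{\alpha,\beta}(f) \leqslant 1$ holds globally and the increment along the optimal direction matches the upper bound exactly.
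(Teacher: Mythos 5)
Your second (change-of-variables) argument is correct and is essentially the paper's own proof: the paper makes the same decoupling substitution, defining $g$ by $g\bigl(p, \tfrac{1}{\sqrt{\beta}}(\alpha \sum_{i=1}^d p_i + \xi)\bigr) = f(p,\xi)$ so that $\Gamma^{\alpha,\beta}(f) = \sum_{i=1}^d (\partial g/\partial p_i)^2 + (\partial g/\partial \xi)^2$, and then invokes the standard Euclidean case --- precisely the fact you prove by hand via the line integral, Cauchy--Schwarz, and linear maximizers. The only difference is that you carry out the Euclidean base case explicitly (which also settles the attainment issue you flag, since linear functions saturate the constraint globally and realize the bound), whereas the paper simply cites it.
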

\begin{proof}
Let $ f \in C^{\infty}\left( \mathbb{R}^d \times \mathbb{R} \right)$. Consider the function $ g \in C^{\infty}\left( \mathbb{R}^d \times \mathbb{R} \right)$ such that
\[
g\left( p, \frac{1}{\sqrt{\beta}} \left( \alpha \sum_{i=1}^d p_i +\xi \right) \right)=f( p,\xi).
\]
We have then
\[
\Gamma^{\alpha,\beta} (f)= \sum_{i=1}^d \left( \frac{\partial g}{\partial p_i }\right)^2 +\left( \frac{\partial g}{\partial \xi }\right)^2.
\]
Therefore
\begin{align*}
	d_{\alpha,\beta} &( (p,\xi), (p',\xi'))  \\
	&=\sup  \left\{  g\left( p, \frac{1}{\sqrt{\beta}} \left( \alpha \sum_{i=1}^d p_i +\xi \right) \right) - g\left( p', \frac{1}{\sqrt{\beta}} \left( \alpha \sum_{i=1}^d p_i' +\xi' \right) \right) : \right. \\
& \qquad\qquad\left. \, g \in C^{\infty}\left( \mathbb{R}^d \times \mathbb{R} \right), \,    \sum_{i=1}^d \left( \frac{\partial g}{\partial p_i }\right)^2 +\left( \frac{\partial g}{\partial \xi }\right)^2 \leqslant 1\right\}
\end{align*}
and the conclusion follows from the standard Euclidean case.
\end{proof}

For now, we make the following assumption on the function $F$ defining the diffusion.

\begin{assumption}\label{A} For $F:\mathbb{R}^d\to\mathbb{R}$, there exist constants $m,M >0$ such that for every $i=1,\cdots,d$ and $p\in \mathbb{R}^d$
\begin{align*}
m \leqslant \frac{\partial F}{\partial p_i} (p) \leqslant  M.
\end{align*}
\end{assumption}

Note that the lower bound on the derivatives implies for example that the range of $F$ is unbounded. Under this assumption, we have the following key proposition.

\begin{proposition}\label{p.key}
Suppose that $F:\mathbb{R}^d\to\mathbb{R}$ satisfies Assumption \ref{A}. Then for every $f \in C^{\infty}\left( \mathbb{R}^d \times \mathbb{R} \right)$,
\[
\Gamma_2^{\alpha,\beta} (f) \geqslant -\frac{M-m}{4\alpha} \Gamma(f) + m \sum_{i=1}^d \left( \alpha \left(\frac{\partial f}{\partial \xi }\right)^2 -\frac{\partial f}{\partial \xi } \frac{\partial f}{\partial p_i }\right).
\]
\end{proposition}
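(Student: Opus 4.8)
The plan is to compute $\Gamma_2^{\alpha,\beta}(f)$ explicitly through a Bochner-type expansion, isolate a manifestly nonnegative ``Hessian'' part together with a single lower-order term carrying the first derivatives of $F$, and then control that term using Assumption~\ref{A} and an elementary scalar inequality. Throughout I write $\partial_{p_i} = \partial/\partial p_i$, $\partial_\xi = \partial/\partial\xi$, and set $G_i := \partial_{p_i}f - \alpha\,\partial_\xi f$, so that $\Gamma^{\alpha,\beta}(f) = \sum_{i=1}^d G_i^2 + \beta (\partial_\xi f)^2$, recalling $L = \tfrac12\Delta_p + F\,\partial_\xi$.

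The bulk of the work is to expand $\tfrac12 L\,\Gamma^{\alpha,\beta}(f)$ and $\Gamma^{\alpha,\beta}(f,Lf)$ term by term and track the cancellations. Applying $\tfrac12\Delta_p$ to $\Gamma^{\alpha,\beta}(f)$ produces the sum of squares $\tfrac12\sum_{i,j}(\partial_{p_i}\partial_{p_j}f - \alpha\,\partial_{p_j}\partial_\xi f)^2 + \tfrac{\beta}{2}\sum_j (\partial_{p_j}\partial_\xi f)^2$ together with third-order terms of the shape $G_i(\partial_{p_i}\Delta_p f - \alpha\,\partial_\xi\Delta_p f)$ and $(\partial_\xi f)\,\partial_\xi\Delta_p f$; applying $F\,\partial_\xi$ produces $F$-weighted second-order terms of the shape $F\,G_i(\partial_{p_i}\partial_\xi f - \alpha\,\partial_\xi^2 f)$ and $F(\partial_\xi f)\,\partial_\xi^2 f$. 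Differentiating $Lf = \tfrac12\Delta_p f + F\,\partial_\xi f$ and pairing with $\Gamma^{\alpha,\beta}$ reproduces exactly the same third-order and $F$-weighted second-order contributions, the only extra term arising from $\partial_{p_i}(F\,\partial_\xi f) = (\partial_{p_i}F)\,\partial_\xi f + F\,\partial_{p_i}\partial_\xi f$. The key point is that every third-order term and every term in which $F$ appears undifferentiated cancels between the two expressions, leaving the clean identity
\[
\Gamma_2^{\alpha,\beta}(f) = \frac12\sum_{i,j=1}^d\left(\frac{\partial^2 f}{\partial p_i\partial p_j} - \alpha\frac{\partial^2 f}{\partial p_j\partial\xi}\right)^2 + \frac{\beta}{2}\sum_{j=1}^d\left(\frac{\partial^2 f}{\partial p_j\partial\xi}\right)^2 - \frac{\partial f}{\partial\xi}\sum_{i=1}^d \frac{\partial F}{\partial p_i}\left(\frac{\partial f}{\partial p_i} - \alpha\frac{\partial f}{\partial\xi}\right).
\]

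Since the first two groups of terms form a sum of squares and $\beta\geqslant 0$, they are nonnegative and may be discarded, giving
\[
\Gamma_2^{\alpha,\beta}(f) \geqslant \sum_{i=1}^d \frac{\partial F}{\partial p_i}\left(\alpha\Big(\frac{\partial f}{\partial\xi}\Big)^2 - \frac{\partial f}{\partial\xi}\frac{\partial f}{\partial p_i}\right).
\]
To finish I would write $\partial_{p_i}F = m + (\partial_{p_i}F - m)$, where $0 \leqslant \partial_{p_i}F - m \leqslant M - m$ by Assumption~\ref{A}. The $m$-part reproduces exactly the term $m\sum_i(\alpha(\partial_\xi f)^2 - (\partial_\xi f)(\partial_{p_i} f))$ appearing in the claim. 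For the remainder, the scalar inequality $\alpha u^2 - uv = \alpha\left(u - \tfrac{v}{2\alpha}\right)^2 - \tfrac{v^2}{4\alpha} \geqslant -\tfrac{v^2}{4\alpha}$, valid for $\alpha>0$, applied with $u = \partial_\xi f$ and $v = \partial_{p_i} f$, together with $0\leqslant \partial_{p_i}F - m \leqslant M-m$, yields term by term
\[
\left(\frac{\partial F}{\partial p_i} - m\right)\left(\alpha\Big(\frac{\partial f}{\partial\xi}\Big)^2 - \frac{\partial f}{\partial\xi}\frac{\partial f}{\partial p_i}\right) \geqslant -\frac{M-m}{4\alpha}\Big(\frac{\partial f}{\partial p_i}\Big)^2.
\]
Summing over $i$ and using $\Gamma(f) = \sum_i(\partial_{p_i} f)^2$ gives the stated inequality.

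The only real obstacle is the bookkeeping in the $\Gamma_2$ expansion: one must carefully verify that all third-order terms cancel (these arise from the top-order part of $L$ on both sides) and that all second-order terms weighted by the undifferentiated $F$ cancel, so that the sole survivor involving $F$ is the term proportional to $\partial_{p_i}F$. Once this identity is in hand, the sum-of-squares positivity and the one-variable quadratic estimate make the remainder routine.
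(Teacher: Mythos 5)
Your proposal is correct and follows essentially the same route as the paper's proof: compute $\Gamma_2^{\alpha,\beta}$ explicitly, discard the nonnegative second-order (Hessian) part, and handle the surviving term $\sum_i \frac{\partial F}{\partial p_i}\bigl(\alpha(\partial_\xi f)^2 - \partial_\xi f\,\partial_{p_i}f\bigr)$ by completing the square and applying the bounds $m \leqslant \partial_{p_i}F \leqslant M$ (your splitting $\partial_{p_i}F = m + (\partial_{p_i}F - m)$ is algebraically the same estimate the paper performs termwise). The only cosmetic discrepancy is a factor of $2$ in the quadratic Hessian terms, reflecting whether one takes $L=\tfrac12\Delta_p + F\,\partial_\xi$ (your identity) or $\Delta_p + F\,\partial_\xi$ (the paper's displayed expansion); since those terms are discarded in either normalization and the $F$-dependent term is identical, the conclusion is unaffected.
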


\begin{proof}
Direct computation shows that
\begin{align*}
& \Gamma_2^{\alpha,\beta}(f)=\sum_{i,j=1}^d \left( \frac{\partial^2 f}{\partial p_i \partial p_j} \right)^2-2\alpha \sum_{i,j=1}^d  \frac{\partial^2 f}{\partial p_i \partial p_j} \frac{\partial^2 f}{\partial p_i \partial \xi}
\\
&
+(d \alpha^2+\beta)\sum_{i=1}^d \left( \frac{\partial^2 f}{\partial p_i \partial \xi} \right)^2
-\langle \nabla_p f , \nabla_p F \rangle \frac{\partial f}{\partial \xi}+\alpha \left( \frac{\partial f}{\partial \xi }\right)^2 \left( \sum_{i=1}^d \frac{\partial F}{\partial p_i} \right).
\end{align*}
Completing the squares and using $\beta \geqslant 0$ implies that
\[
\sum_{i,j=1}^d \left( \frac{\partial^2 f}{\partial p_i \partial p_j} \right)^2-2\alpha \sum_{i,j=1}^d  \frac{\partial^2 f}{\partial p_i \partial p_j} \frac{\partial^2 f}{\partial p_i \partial \xi} +(d \alpha^2+\beta)\sum_{i=1}^d \left( \frac{\partial^2 f}{\partial p_i \partial \xi} \right)^2 \geqslant 0.
\]
Therefore
\begin{align*}
\Gamma_2^{\alpha,\beta}(f) \geqslant -\langle \nabla_p f , \nabla_p F \rangle \frac{\partial f}{\partial \xi}+\alpha \left( \frac{\partial f}{\partial \xi }\right)^2 \left( \sum_{i=1}^d \frac{\partial F}{\partial p_i} \right).
\end{align*}
We then observe that for each $i=1, ..., d$
\begin{align*}
\frac{\partial F}{\partial p_i} \left( \alpha \left( \frac{\partial f}{\partial \xi }\right)^2  -\frac{\partial f}{\partial \xi } \frac{\partial f}{\partial p_i }\right)&= \frac{\partial F}{\partial p_i} \left( \left( \sqrt{\alpha} \frac{\partial f}{\partial \xi } -\frac{1}{2\sqrt{\alpha}} \frac{\partial f}{\partial p_i } \right)^2-\frac{1}{4\alpha} \left( \frac{\partial f}{\partial p_i }\right)^2 \right) \\
 & \geqslant  m \left( \sqrt{\alpha} \frac{\partial f}{\partial \xi } -\frac{1}{2\sqrt{\alpha}} \frac{\partial f}{\partial p_i } \right)^2-\frac{M}{4\alpha} \left( \frac{\partial f}{\partial p_i }\right)^2 \\
 &= m \left( \alpha \left( \frac{\partial f}{\partial \xi }\right)^2  -\frac{\partial f}{\partial \xi } \frac{\partial f}{\partial p_i }\right)- \frac{M-m}{4\alpha} \left( \frac{\partial f}{\partial p_i }\right)^2
\end{align*}
and the desired result follows from summing over $i$.
\end{proof}

We will use Proposition \ref{p.key} and the fact that  $\Gamma^{\alpha, \beta}$ and $\Gamma$ satisfy the commutation relation
\[
\Gamma^{\alpha, \beta}(f, \Gamma(f))= \Gamma( f, \Gamma^{\alpha,\beta}(f))
\]
to prove a reverse log Sobolev inequality for the generalized Kolmogorov operator. This commutation relation is similar to \cite[Hypothesis 1.2]{BaudoinGarofalo2017}.

\begin{proposition}[Reverse log Sobolev inequality]\label{reverse log-sob}
	Suppose that $F:\mathbb{R}^d\to\mathbb{R}$ satisfies Assumption \ref{A}. Then for all nonnegative bounded $f \in C^1(\mathbb{R}^d \times \mathbb{R})$ and $t>0$
	\begin{multline*}
 \sum_{i=1}^d \left( \frac{\partial \ln P_t f}{\partial p_i } -\frac{m}{2} t \frac{\partial \ln P_t f}{\partial \xi } \right)^2 +\frac{m^2}{12} t^2  \left( \frac{\partial \ln P_t f}{\partial \xi }\right)^2 \\ \leqslant  \frac{M}{ m t P_t f } (P_t (f \ln f)  -P_tf \ln P_t f ).
	\end{multline*}
\end{proposition}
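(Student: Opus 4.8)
The plan is to run the generalized $\Gamma_2$ (Bakry--\'Emery) entropy method, exactly as in the Gaussian prototype of Proposition \ref{p.rlsi} (that is, \cite[Proposition 2.8]{BaudoinGordinaMariano2020}), now feeding in the one-sided curvature bound of Proposition \ref{p.key} in place of the exact $\Gamma_2$ identity available when $F$ is affine. First I would reduce to the case of $f$ bounded below by a positive constant (replace $f$ by $f+\varepsilon$ and let $\varepsilon\downarrow 0$ at the end), so that $\ln P_{t-s}f$ is a legitimate smooth function along the flow. Fix $t>0$ and a base point, write $u_s:=P_{t-s}f$ and $\psi_s:=\ln u_s$ for $s\in[0,t]$, write $\Gamma_L:=\tfrac12\Gamma$ for the carr\'e du champ of $L$, and introduce the interpolation functional
\[
\Phi(s):=P_s\!\left(u_s\,\Gamma^{a(s),b(s)}(\psi_s)\right),
\]
where $a,b:[0,t]\to[0,\infty)$ are to be chosen with boundary values $a(t)=b(t)=0$ and $a(0)=\tfrac{m}{2}t$, $b(0)=\tfrac{m^2}{12}t^2$, so that $\Phi(0)=P_tf\cdot\Gamma^{\frac m2 t,\frac{m^2}{12}t^2}(\ln P_tf)$ is exactly $P_tf$ times the left-hand side of the claimed inequality.

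The computational core is the formula for $\Phi'(s)$. Using the diffusion product rule $L(uG)=uLG+GLu+2\Gamma_L(u,G)$, the chain rule $\partial_s\psi_s=-L\psi_s-\Gamma_L(\psi_s)$, and the definition of $\Gamma_2^{\alpha,\beta}$, the third-order terms coming from $2\Gamma_L(\psi,\Gamma^{a,b}(\psi))$ and $2\Gamma^{a,b}(\psi,\Gamma_L(\psi))$ cancel precisely because of the commutation relation $\Gamma(f,\Gamma^{\alpha,\beta}(f))=\Gamma^{\alpha,\beta}(f,\Gamma(f))$ quoted above. What survives is
\[
\Phi'(s)=P_s\!\left(u_s\Big[\,2\,\Gamma_2^{a(s),b(s)}(\psi_s)+\dot a(s)\,\partial_\alpha\Gamma^{\alpha,\beta}(\psi_s)+\dot b(s)\,(\partial_\xi\psi_s)^2\,\Big]\right),
\]
with the coefficient derivatives evaluated at $(\alpha,\beta)=(a(s),b(s))$. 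I would then insert the lower bound of Proposition \ref{p.key}, which replaces $2\Gamma_2^{a,b}$ by the sum of the defect term $-\tfrac{M-m}{2a}\Gamma(\psi)$ and the curvature-type term $2m\sum_i\big(a(\partial_\xi\psi)^2-\partial_\xi\psi\,\partial_{p_i}\psi\big)$.

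The heart of the matter is then to choose $a(s),b(s)$ so that the indefinite cross terms $\partial_\xi\psi\,\partial_{p_i}\psi$ contributed both by Proposition \ref{p.key} and by $\dot a\,\partial_\alpha\Gamma^{\alpha,\beta}$ cancel, while the remaining $(\partial_\xi\psi)^2$ terms, together with $\dot b$, reorganize into a nonnegative expression. This is the same linear-in-$s$/quadratic-in-$s$ interpolation that makes the Gaussian case work, and it is exactly what pins down the endpoint constants $\tfrac{m}{2}t$ and $\tfrac{m^2}{12}t^2$; the only new feature is the defect $-\tfrac{M-m}{2a(s)}\Gamma(\psi_s)$, which is the source of the factor $M/m$ in the final constant and is absent when $m=M$, recovering Proposition \ref{p.rlsi}. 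With such a choice one obtains a differential comparison of $\Phi$ with the entropy-dissipation density $\tfrac12 P_s(u_s\Gamma(\psi_s))$; integrating over $[0,t]$ and using the dissipation identity $P_t(f\ln f)-P_tf\ln P_tf=\tfrac12\int_0^t P_s(u_s\Gamma(\psi_s))\,ds$ yields the stated inequality after dividing by $P_tf$.

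I expect the main obstacle to be precisely the bookkeeping in this coefficient choice: one must verify that a single pair $(a(s),b(s))$ simultaneously annihilates the cross terms, keeps the $(\partial_\xi\psi)^2$ contribution of the correct sign, respects the boundary data, and leaves behind exactly a controllable multiple of $\Gamma(\psi_s)$, so that integration produces the clean constant $M/(mt)$ rather than something dimension-dependent (in particular the spurious factor $d$ appearing in the term $a\,d\,(\partial_\xi\psi)^2$ must drop out after the cancellation). As a consistency check I would specialize to $m=M=1$ and confirm that the argument collapses to the proof of Proposition \ref{p.rlsi}; the commutation relation, which plays the same role as \cite[Hypothesis 1.2]{BaudoinGarofalo2017}, is what guarantees that no uncontrolled higher-order terms obstruct this reduction.
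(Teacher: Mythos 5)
Your overall strategy (an interpolation functional built from $\Gamma^{a(s),b(s)}$, the curvature lower bound of Proposition \ref{p.key}, the commutation relation to cancel third--order terms, and the entropy--dissipation identity) is the same as the paper's, but the specific functional you set up does not close, and the obstruction is structural rather than bookkeeping. Write $\psi_s=\ln P_{t-s}f$ and $u_s=P_{t-s}f$. First, your boundary conditions $a(t)=b(t)=0$ do \emph{not} make the functional vanish at $s=t$: since $\Gamma^{0,0}=\Gamma$, you get $\Phi(t)=P_t\left(f\,\Gamma(\ln f)\right)$, a quantity involving the derivatives of $f$ itself which is in no way controlled by the entropy $P_t(f\ln f)-P_tf\ln P_tf$ (take $f$ bounded between $1-\varepsilon$ and $1+\varepsilon$ but rapidly oscillating: the entropy stays $O(\varepsilon^2)$ while $P_t(f\,\Gamma(\ln f))$ is arbitrarily large). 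Integrating your differential inequality over $[0,t]$ therefore gives $\Phi(0)\leqslant \Phi(t)+C\cdot\bigl(P_t(f\ln f)-P_tf\ln P_tf\bigr)$, with the uncontrolled nonnegative term $\Phi(t)$ on the right, and the argument stalls. Second, the cross terms cannot be removed by any admissible choice of $a$: collecting the coefficient of $\sum_{i}\partial_\xi\psi_s\,\partial_{p_i}\psi_s$ in your formula for $\Phi'(s)$ (the term $-2m\sum_i\partial_\xi\psi_s\partial_{p_i}\psi_s$ from Proposition \ref{p.key} plus $-2\dot a(s)\sum_i\partial_{p_i}\psi_s\partial_\xi\psi_s$ from $\dot a\,\partial_\alpha\Gamma^{\alpha,\beta}$) yields $-2m-2\dot a(s)$, so cancellation forces $\dot a\equiv -m$, hence $a(0)-a(t)=mt$, which is incompatible with your data $a(0)=\tfrac{m}{2}t$, $a(t)=0$ (these force average slope $-m/2$). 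Keeping $a(0)=\tfrac m2 t$ with $\dot a\equiv-m$ instead drives $a(s)$ negative on $(t/2,t]$, outside the regime where Proposition \ref{p.key} even makes sense (its proof completes squares with $\sqrt{\alpha}$ and a term $\tfrac{M-m}{4\alpha}$).

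The missing device is the scalar prefactor $(t-s)$, and it is exactly what the paper uses: with $\alpha(s)=\tfrac m2(t-s)$, $\beta(s)=\tfrac{m^2}{12}(t-s)^2$ and $\phi(s)=P_s\left(u_s\,\Gamma^{\alpha(s),\beta(s)}(\psi_s)\right)$, one works with $\varphi(s)=(t-s)\phi(s)$. This repairs both defects at once. The factor $(t-s)$ vanishes at $s=t$, so there is no boundary term to control; and $\varphi'=-\phi+(t-s)\phi'$, where the extra $-\phi(s)$, upon expanding $\Gamma^{\alpha,\beta}$, contributes $+2\alpha(s)\sum_i\partial_{p_i}\psi_s\,\partial_\xi\psi_s=+m(t-s)\sum_i\partial_{p_i}\psi_s\,\partial_\xi\psi_s$, precisely cancelling the residual cross term $-m(t-s)\sum_i\partial_{p_i}\psi_s\,\partial_\xi\psi_s$ coming from $(t-s)\phi'(s)$, and also contributes $-\Gamma(\psi_s)$, which combines with the defect $-(t-s)\tfrac{M-m}{2\alpha(s)}\Gamma(\psi_s)=-\tfrac{M-m}{m}\Gamma(\psi_s)$ to produce the clean coefficient $-\tfrac Mm$. (Note also that the dimension-dependent terms do not ``drop out'' as you hoped; they survive as $\tfrac{3m^2}{4}(t-s)^2(d-1)\left(\partial_\xi\psi_s\right)^2$, which is harmless because it is nonnegative.) One then gets $\varphi'(s)\geqslant -\tfrac Mm P_s\left(u_s\Gamma(\psi_s)\right)$, and integrating from $0$ to $t$ with $\varphi(t)=0$ gives $t\,P_tf\,\Gamma^{\frac m2 t,\frac{m^2}{12}t^2}(\ln P_tf)=\varphi(0)\leqslant \tfrac Mm\left(P_t(f\ln f)-P_tf\ln P_tf\right)$, which is the proposition. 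With this modification your outline coincides with the paper's proof; without it, the proof as proposed fails.
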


\begin{proof}
We can assume that $f$ is smooth and rapidly decreasing.
Let $t>0$ and for $\alpha\left(  \cdot \right), \beta\left( \cdot \right) \in C^{1}\left( [0, t]\right)$, consider the functional
\[
\phi\left( s \right):=P_s ( (P_{t-s} f) \Gamma^{\alpha (s), \beta (s) } (\ln P_{t-s} f)), \quad 0 \leqslant  s \leqslant  t.
\]
Straightforward computation by the chain rule shows that
\begin{align*}
 \phi^{\prime}\left( s \right)=&2P_s ((P_{t-s} f)  \Gamma_2^{\alpha (s), \beta (s) } (\ln P_{t-s} f))
\\
& -2\alpha^{\prime}(s)\sum_{i=1}^d P_s \left( (P_{t-s} f) \frac{\partial \ln P_{t-s} f}{\partial \xi } \frac{\partial \ln P_{t-s}f}{\partial p_i } \right) \\
& +2\alpha(s)\alpha^{\prime}(s)P_s \left( (P_{t-s} f)  \left(  \frac{\partial \ln P_{t-s} f}{\partial \xi} \right)^2\right) \\
&+\beta^{\prime}(s) P_s \left( (P_{t-s} f)  \left(  \frac{\partial \ln P_{t-s} f}{\partial \xi} \right)^2\right).
\end{align*}
In particular, taking $\alpha(s)=\frac{m}{2} (t-s)$ and $\beta (s)=\frac{m^2}{12} (t-s)^2$ and
\[
	\varphi (s)=(t-s)\phi(s), \quad 0 \leqslant  s \leqslant  t,
\]
we have
\begin{align*}
	\varphi^{\prime}(s)  &=- \phi(s) + (t-s)\phi'(s)
\\
	&\geqslant  -\frac{M}{m} P_s( (P_{t-s} f)  \| \nabla_p \ln P_{t-s} f \|^2) =  -\frac{M}{m} P_s( (P_{t-s} f)  \Gamma(\ln P_{t-s} f)),
\end{align*}
by Proposition \ref{p.key}. Therefore,
\[
\varphi (0) \leqslant  \frac{M}{m} \int_0^t P_s( (P_{t-s} f) \Gamma(\ln P_{t-s} f)) ds.
\]
We now observe that
\[
 \int_0^t P_s( (P_{t-s} f) \Gamma(\ln P_{t-s} f)) ds= P_t(f \ln f)  -P_tf \ln P_t f,
\]
and thus
	\[
t (P_t f )  \Gamma^{\alpha (0), \beta (0) } (\ln P_{t} f) \leqslant  \frac{M}{m} (P_t(f \ln f)  -P_tf \ln P_t f)
\]
which may be re-written to give the desired result.
\end{proof}

As in Section \ref{s.fk}, a Wang-type Harnack inequality now follows from the reverse log Sobolev inequality.

%Let $B(\mathbb{R}^d \times \mathbb{R}^{d})$ be the space of Borel functions on $\mathbb{R}^d \times %\mathbb{R}^{d}$.
\begin{theorem}[Wang-type Harnack inequality]\label{t.whi}
	Suppose that $F:\mathbb{R}^d\to\mathbb{R}$ satisfies Assumption \ref{A}. Then for every nonnegative Borel bounded $f$ on $\mathbb{R}^d \times \mathbb{R}$, $t>0$, $(p,\xi),(p',\xi') \in \mathbb{R}^d \times \mathbb{R}$, and $\alpha >1$, we have
\begin{align*}
 & (P_t f)^\alpha \left( p,\xi \right) \leqslant  C_{\alpha}\left( t, \left( p,\xi \right), \left( p^{\prime},\xi^{\prime} \right)\right)(P_t f^\alpha )(p^{\prime},\xi^{\prime}),
\end{align*}
where
\begin{multline*}
C_{\alpha}\left( t, \left( p,\xi \right), \left( p^{\prime},\xi^{\prime} \right)\right)
\\
:=\exp \left(  \frac{\alpha M}{4m(\alpha-1)t} \left(  \frac{12}{m^2t^2}  \left( \frac{mt}{2} \sum_{i=1}^d(p_i-p_i') +(\xi-\xi')\right)^2+\|p-p'\|^2\right)  \right).
\end{multline*}
\end{theorem}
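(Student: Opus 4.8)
The plan is to derive the Harnack inequality from the reverse log Sobolev inequality of Proposition \ref{reverse log-sob} by the geodesic interpolation method of Wang, exactly as in the references cited after Theorem \ref{t.wang}. The first observation is purely notational but decisive: writing $\alpha_0 := \frac{m}{2}t$ and $\beta_0 := \frac{m^2}{12}t^2$ (not to be confused with the Harnack exponent $\alpha>1$ in the statement), the left-hand side of Proposition \ref{reverse log-sob} is \emph{precisely} $\Gamma^{\alpha_0,\beta_0}(\ln P_t f)$, so that proposition reads
\[
\Gamma^{\alpha_0,\beta_0}(\ln P_t f) \leqslant \frac{M}{m t\, P_t f}\bigl(P_t(f\ln f) - P_t f\ln P_t f\bigr).
\]
By Lemma \ref{l.2.4}, $d_{\alpha_0,\beta_0}((p,\xi),(p',\xi'))^2$ is exactly the quantity appearing inside $C_\alpha$; moreover the change of variables from the proof of Lemma \ref{l.2.4} turns $\Gamma^{\alpha_0,\beta_0}$ into the standard Euclidean cometric on $\mathbb{R}^{d+1}$, so the metric is flat, minimizing geodesics are straight segments, and for every smooth $\phi$ and every constant-speed geodesic $\gamma$ one has the pointwise Cauchy--Schwarz bound $\bigl|\tfrac{d}{ds}\phi(\gamma(s))\bigr| \leqslant \sqrt{\Gamma^{\alpha_0,\beta_0}(\phi)(\gamma(s))}\; d_{\alpha_0,\beta_0}(\gamma(0),\gamma(1))$.

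After a routine reduction (mollification and addition of a small positive constant, using that $P_t$ is positivity preserving) it suffices to treat $f$ smooth, bounded, and bounded below by a positive constant, so that $P_t(f^q)>0$ is smooth for all $q\in[1,\alpha]$ and all logarithms below are legitimate. Fix $x=(p,\xi)$, $y=(p',\xi')$, let $\gamma:[0,1]\to\mathbb{R}^d\times\mathbb{R}$ be the constant-speed geodesic with $\gamma(0)=x$, $\gamma(1)=y$, put $d:=d_{\alpha_0,\beta_0}(x,y)$, and set $q(s):=1+(\alpha-1)s$. I would then study the two-parameter interpolation functional
\[
\Lambda(s):=\frac{1}{q(s)}\ln P_t\bigl(f^{q(s)}\bigr)(\gamma(s)), \qquad 0\leqslant s\leqslant 1,
\]
whose endpoints are $\Lambda(0)=\ln P_t f(x)$ and $\Lambda(1)=\frac{1}{\alpha}\ln P_t f^\alpha(y)$.

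Differentiating $\Lambda$ produces two groups of terms: the variation of the exponent yields $\frac{q'}{q^2}\bigl(\frac{q\,P_t(f^q\ln f)}{P_t(f^q)}-\ln P_t(f^q)\bigr)$, whose bracket is exactly the entropy ratio bounded below by $\frac{mt}{M}\,\Gamma^{\alpha_0,\beta_0}(\ln P_t f^q)$ through the reverse log Sobolev inequality applied to $f^{q(s)}$; the motion along $\gamma$ contributes $\frac1q\langle\nabla\ln P_t(f^q),\dot\gamma\rangle$, bounded in absolute value by $\frac1q\sqrt{\Gamma^{\alpha_0,\beta_0}(\ln P_t f^q)}\,d$. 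Writing $G:=\sqrt{\Gamma^{\alpha_0,\beta_0}(\ln P_t f^q)(\gamma(s))}$ and using $q'>0$, these combine to
\[
\Lambda'(s)\geqslant \frac{q'(s)\,mt}{M\,q(s)^2}\,G^2-\frac{d}{q(s)}\,G \geqslant -\frac{M\,d^2}{4mt\,q'(s)},
\]
the last step being the minimization of $aG^2-bG$ over $G\geqslant 0$. With $q'(s)\equiv\alpha-1$ this gives $\Lambda'(s)\geqslant -\frac{Md^2}{4mt(\alpha-1)}$; integrating over $[0,1]$, multiplying by $\alpha$, and exponentiating yields
\[
(P_t f)^\alpha(x)\leqslant (P_t f^\alpha)(y)\exp\Bigl(\frac{\alpha M}{4m(\alpha-1)t}\,d^2\Bigr),
\]
which is the claim once $d^2$ is replaced by its value from Lemma \ref{l.2.4}. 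The affine choice of $q$ is in fact optimal, since by Cauchy--Schwarz $\int_0^1 q'(s)^{-1}\,ds\geqslant (\alpha-1)^{-1}$ under $q(0)=1$, $q(1)=\alpha$.

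The genuinely technical points are the approximation legitimizing the manipulation of $\ln P_t(f^q)$ and the interchange of $\frac{d}{ds}$ with $P_t$, both standard for this smooth-coefficient, non-degenerate diffusion. The main conceptual obstacle, which the \emph{simultaneous} interpolation in the exponent $q(s)$ and along the geodesic $\gamma(s)$ is designed to resolve, is arranging the cancellation so that the entropy bracket is absorbed against the spatial gradient term and the sharp constant $\frac{M}{4mt(\alpha-1)}$ emerges carrying precisely the $\Gamma^{\alpha_0,\beta_0}$-distance identified in Lemma \ref{l.2.4}.
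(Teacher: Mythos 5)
Your proof is correct and takes essentially the same approach as the paper: rewrite the reverse log Sobolev inequality of Proposition \ref{reverse log-sob} as the bound $\Gamma^{\frac{m}{2}t,\frac{m^2}{12}t^2}(\ln P_t f)\leqslant \frac{M}{mt\,P_tf}(P_t(f\ln f)-P_tf\ln P_tf)$, integrate it via the Wang interpolation (simultaneously in the exponent $q(s)$ and along a geodesic of the control distance $d_{\frac{m}{2}t,\frac{m^2}{12}t^2}$), and then compute that distance by Lemma \ref{l.2.4}. The only difference is that the paper delegates the interpolation step to the cited references (\cite[Proposition 3.4]{BaudoinBonnefont2012}, \cite[Proposition 2.10]{BaudoinGordinaMelcher2013}), whereas you carry it out explicitly, with the correct optimization yielding the constant $\frac{\alpha M}{4m(\alpha-1)t}$.
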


\begin{proof}
As before we assume that $f$ is rapidly decreasing. Let $t>0$ be fixed and $(p,\xi),(p',\xi') \in \mathbb{R}^d \times \mathbb{R}^d$. We observe first that the reverse log Sobolev inequality in Proposition \ref{reverse log-sob} can be rewritten
\[
\Gamma^{\frac{m}{2}t, \frac{m^2}{12}t^2} (\ln P_t f)\leqslant   \frac{M}{ tm P_t f } (P_t (f \ln f)  -P_tf \ln P_t f ).
\]
We can now integrate the previous inequality as in the proof of \cite[Proposition 3.4]{BaudoinBonnefont2012} or \cite[Proposition 2.10]{BaudoinGordinaMelcher2013} and deduce
\[
(P_t f)^\alpha (p,\xi) \leqslant  (P_t f^\alpha )(p',\xi') \exp \left( \frac{\alpha M}{4m(\alpha-1)} \frac{d_t^2((p,\xi),(p',\xi'))}{t}  \right).
\]
	where $d_t$ is the control distance associated to the gradient $\Gamma^{\frac{mt}{2}, \frac{m^2t^2}{12}}$. The result now follows by Lemma \ref{l.2.4} with the chosen $\alpha$ and $\beta$.
%An elementary computation shows that
%\begin{align*}
% d_t^2((p,\xi),(p',\xi'))
%= \frac{12}{m^2t^2}  \left( \frac{1}{2} mt \sum_{i=1}^d (p_i'-p_i) +(\xi'-\xi)\right)^2+\sum_{i=1}^d (p_i'-p_i)^2
%\end{align*}
%and the proof is complete.
\end{proof}

Now we modify Assumption \ref{A} as follows.

\begin{assumption}\label{A2} For $F:\mathbb{R}^d\to\mathbb{R}$, there exist constants $m,M >0$ and a non-empty subset $I \subset \{ 1, \cdots, d\}$ such that for every $i \in I$ and $p\in \mathbb{R}^d$
\begin{align*}
m \leqslant \frac{\partial F}{\partial p_i} (p) \leqslant  M
\end{align*}
and for every $i \notin I$ and $p\in \mathbb{R}^d$,  $\frac{\partial F}{\partial p_i} (p) =0$.
\end{assumption}

\begin{notation}
	For a subset $J\subset \{ 1, \cdots, d\}$ and $p\in\R^d$, let $p_J:= (p_i)_{i\in J}\in \R^J$ and
	\[ \|p\|_J^2:= \|p_J\|_{\mathbb{R}^J}^2 = \sum_{i\in J} p_i^2. \]
\end{notation}

\begin{theorem}[Wang-type Harnack inequality]\label{t.whi2}
	Suppose that $F:\mathbb{R}^d\to\mathbb{R}$ satisfies Assumption \ref{A2}. Then for every nonnegative Borel bounded $f$ on $\mathbb{R}^d \times \mathbb{R}$, $t>0$, $(p,\xi),(p',\xi') \in \mathbb{R}^d \times \mathbb{R}$, and $\alpha >1$, we have
\begin{align*}
 & (P_t f)^\alpha \left( p,\xi \right) \leqslant  C_{\alpha,I}\left( t, \left( p,\xi \right), \left( p^{\prime},\xi^{\prime} \right)\right)(P_t f^\alpha )(p^{\prime},\xi^{\prime}),
\end{align*}
with
\begin{align*}
 &C_{\alpha,I}\left( t, \left( p,\xi \right), \left( p^{\prime},\xi^{\prime} \right)\right)
\\
	&:= \exp \left(  \frac{\alpha M}{4m(\alpha-1)t} \left(  \frac{12}{m^2t^2}  \left( \frac{mt}{2} \sum_{i \in I} (p_i-p_i') +(\xi-\xi')\right)^2+\|p-p'\|_I^2\right)  \right)
\\
 &\qquad\times \exp \left(  \frac{\alpha }{4(\alpha-1)t} \|p-p'\|_{I^c}^2 \right).
\end{align*}
\end{theorem}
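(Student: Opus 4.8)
The plan is to exploit the structural consequence of Assumption \ref{A2}: since $\partial F/\partial p_i \equiv 0$ for $i\notin I$, the drift depends only on the coordinates $p_I=(p_i)_{i\in I}$, so $F=F(p_I)$. Hence the generator splits as $L = L_I + \tfrac12\Delta_{p_{I^c}}$, where $L_I = \tfrac12\Delta_{p_I} + F(p_I)\,\partial_\xi$ acts only on the variables $(p_I,\xi)\in\R^I\times\R$ and $\tfrac12\Delta_{p_{I^c}}$ is the ordinary Laplacian in the remaining variables $p_{I^c}$. These two operators act on disjoint sets of variables and commute, so the diffusion decouples: $(p_I,\xi)$ is a generalized Kolmogorov process on $\R^I\times\R$ whose drift $F$ satisfies Assumption \ref{A} on $\R^I$ (with the same $m,M$), while $p_{I^c}$ is an independent standard Brownian motion. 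At the level of semigroups this reads $P_t = Q_tR_t=R_tQ_t$, with $Q_t=e^{tL_I}$ and $R_t=e^{t\Delta_{p_{I^c}}/2}$, and the transition kernel factors correspondingly.

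First I would record the two factor inequalities. Theorem \ref{t.whi}, applied to $Q_t$ with $d$ replaced by $|I|$, yields exactly the first of the two exponential factors defining $C_{\alpha,I}$. For $R_t$ I would invoke the classical dimension-free Wang--Harnack inequality for the heat semigroup, equivalently the curvature-zero case of the reverse log-Sobolev argument used to prove Theorem \ref{t.whi}, now with carr\'e du champ $\sum_{i\in I^c}(\partial_{p_i}\cdot)^2$; this gives $(R_t g)^\alpha(p_{I^c}) \leqslant (R_t g^\alpha)(p_{I^c}')\exp\!\big(\tfrac{\alpha}{4(\alpha-1)t}\|p-p'\|_{I^c}^2\big)$, the second factor. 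I would then chain them: writing $P_t f = Q_t(R_t f)$, apply the $Q_t$-inequality to the function $(p_I,\xi)\mapsto (R_t f)(p_I,p_{I^c},\xi)$ with $p_{I^c}$ held fixed to move from $(p_I,\xi)$ to $(p_I',\xi')$; then inside the resulting $Q_t$-integral apply the $R_t$-inequality to $p_{I^c}\mapsto f(p_I'',p_{I^c},\xi'')$ to move from $p_{I^c}$ to $p_{I^c}'$; finally recombine the two kernels via $P_t=Q_tR_t$ to recognize $(P_t f^\alpha)(p',\xi')$. The two multiplicative constants survive unchanged and their product is precisely $C_{\alpha,I}$.

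The step needing care is the chaining: the second constant must be pulled out of the intermediate $Q_t$-integral uniformly in the integration variable, which is legitimate because it depends only on $(p_{I^c},p_{I^c}')$ and not on $(p_I'',\xi'')$, and because both Harnack inequalities hold for arbitrary nonnegative Borel functions. One must also justify the kernel factorization, most transparently from the probabilistic independence of $(p_I,\xi)$ and $p_{I^c}$. As an alternative that stays entirely within this section's framework, one can instead run the reverse log-Sobolev argument of Proposition \ref{reverse log-sob} directly with the modified bilinear form $\sum_{i\in I}(\partial_{p_i}f-\alpha\,\partial_\xi f)^2 + \beta(\partial_\xi f)^2 + \tfrac{M}{m}\sum_{i\in I^c}(\partial_{p_i}f)^2$: Proposition \ref{p.key} goes through with all first-order $F$-terms restricted to $I$ (the $i\notin I$ terms vanish since $\partial_{p_i}F=0$, while the $\partial_{p_i}$ with $i\in I^c$ commute with $L$ and contribute nonnegative second-order terms), the choices $\alpha(s)=\tfrac m2(t-s)$, $\beta(s)=\tfrac{m^2}{12}(t-s)^2$ together with the constant $\tfrac{M}{m}$ on the $I^c$-block make $\varphi(s)=(t-s)\phi(s)$ satisfy $\varphi'(s)\geqslant -\tfrac{M}{m}P_s((P_{t-s}f)\,\Gamma(\ln P_{t-s}f))$, and the control distance of this form, computed as in Lemma \ref{l.2.4}, reproduces $C_{\alpha,I}$ after integration.
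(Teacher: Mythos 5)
Your proposal is correct, and both of your routes deliver exactly the constant $C_{\alpha,I}$. Your main route rests on the same decomposition as the paper's proof (under Assumption \ref{A2} the drift sees only $p_I$, so $L=L^I+\tfrac{1}{2}\sum_{i\notin I}\partial^2_{p_i}$) and the same two ingredients (Theorem \ref{t.whi} on the $(p_I,\xi)$-block, the classical Gaussian Wang--Harnack inequality on the $p_{I^c}$-block), but it merges them differently. The paper tensorizes over product functions: it proves the inequality for $f\otimes g$ via $P_t(f\otimes g)=(P^I_tf)(Q^{I^c}_tg)$ and then asserts that this suffices for all nonnegative bounded Borel functions --- a step that is not automatic, since the Harnack inequality is nonlinear in $f$ and does not extend by linearity or monotone-class arguments alone; the standard justification passes through the equivalent integrated (kernel) formulation, where the kernel factorizes and Fubini applies. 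Your chaining argument --- apply the $(p_I,\xi)$-block inequality to $R_tf$ with $p_{I^c}$ frozen, then push the Gaussian-block bound through the remaining integral, legitimate precisely because that constant is independent of the integration variables and the semigroup is positivity preserving --- operates directly on an arbitrary nonnegative bounded Borel $f$, so it avoids the extension step rather than inheriting it; this is a modest but genuine gain in completeness over the paper's write-up. Your alternative route is the more substantially different one: running the interpolation of Proposition \ref{reverse log-sob} with the form carrying the extra block $\tfrac{M}{m}\sum_{i\in I^c}(\partial_{p_i}f)^2$ is a single unified $\Gamma_2$-argument on the whole space. It does go through as you say: the $I^c$-derivatives commute with $L$ and contribute nonnegative $\Gamma_2$-terms, the same choices of $\alpha(s)$ and $\beta(s)$ yield $\varphi'(s)\geqslant-\tfrac{M}{m}P_s\left((P_{t-s}f)\Gamma(\ln P_{t-s}f)\right)$, and the squared control distance of the modified form is the Lemma \ref{l.2.4} expression on the $I$-block plus $\tfrac{m}{M}\|p-p'\|^2_{I^c}$, which after multiplication by $\tfrac{\alpha M}{4m(\alpha-1)t}$ reproduces $C_{\alpha,I}$ exactly, including the absence of any $M/m$ factor in the $I^c$-part. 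In short, the chaining route is modular and reuses finished theorems, while the modified-$\Gamma$ route is self-contained and handles all functions in one pass at the cost of redoing the semigroup interpolation.
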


\begin{proof}
We  consider the diffusion
\[
Y^I_t=\left(B^I_t , \int_0^t F( B_s ) ds \right),
\]
where $B^I_t=(B^i_t)_{i \in I, t \geqslant 0}$. The infinitesimal generator for this process is
\[
L^I=\sum_{i \in I}\frac{\partial^2}{\partial p^2_i} +F(p) \frac{\partial}{\partial \xi}.
\]
	We note that for $i \notin I$, $\frac{\partial F}{\partial p_i}=0$, thus there exists a function $F^I:\R^I\to \R$ such that for every $p \in \mathbb{R}^d$, $F(p)=F^I (p_I)$. This function $F^I$ satisfies Assumption \ref{A} on $\R^I$. Therefore by Theorem \ref{t.whi}  the semigroup $P_t^I$ of the diffusion $(Y_t^I)_{t \geqslant 0}$ satisfies for any bounded Borel function $f:\mathbb{R}^I \times \mathbb{R}\to \mathbb{R}$ the Wang-type Harnack inequality
\begin{align*}
 & (P^I_t f)^\alpha \left( p_I,\xi \right) \leqslant  A^I_{\alpha}\left( t, \left( p_I,\xi \right), \left( p_I^{\prime},\xi^{\prime} \right)\right)(P^I_t f^\alpha )(p_I^{\prime},\xi^{\prime}),
\end{align*}
where
\begin{multline*}
A^I_{\alpha}\left( t, \left( p_I,\xi \right), \left( p_I^{\prime},\xi^{\prime} \right)\right)
\\
:=\exp \left(  \frac{\alpha M}{4m(\alpha-1)t} \left(  \frac{12}{m^2t^2}  \left( \frac{mt}{2} \sum_{i\in I} (p_i-p_i') +(\xi-\xi')\right)^2+\|p-p'\|_I^2\right)  \right).
\end{multline*}
On the other hand, the infinitesimal generator of the diffusion
\[
Y_t=\left(B_t , \int_0^t F( B_s ) ds \right),
\]
can be written as
\[
L=\sum_{i \notin I} \frac{\partial^2}{\partial p^2_i} +L^I
\]
and the associated semigroup $P_t$ satisfies for any nonnegative bounded Borel functions $f: \mathbb{R}^I \times \mathbb{R}\to \mathbb{R}$ and $g: \mathbb{R}^{I^c} \to \mathbb{R}$
 \[
 P_t ( f \otimes g)(p ,\xi)= (P_t^I f )(p_I, \xi) (Q^{I^c}_t g) ( p_{I^c})
 \]
 where $ f \otimes g$ is the function defined by $( f \otimes g)(p ,\xi)= f(p_I, \xi) g(p_{I^c})$ and where $Q^{I^c}_t$ is the Gaussian heat semigroup in $\mathbb{R}^{I^c}$ generated by $\sum_{i \notin I} \frac{\partial^2}{\partial p^2_i}$. This Gaussian semigroup $Q^{I^c}_t$ satisfies the Wang-type Harnack inequality

 \begin{align*}
 & (Q^{I^c}_t g)^\alpha \left( p_{I^c} \right) \leqslant \exp \left(  \frac{\alpha }{4(\alpha-1)t} \|p-p'\|_{I^c}^2 \right) (Q^{I^c}_t g^\alpha )(p_{I^c}^{\prime}).
 \end{align*}

 We therefore deduce that
 \begin{align*}
 & (P_t (f \otimes g))^\alpha \left( p,\xi \right) \leqslant  C_{\alpha,I}\left( t, \left( p,\xi \right), \left( p^{\prime},\xi^{\prime} \right)\right)(P_t (f\otimes g)^\alpha )(p^{\prime},\xi^{\prime}),
\end{align*}
with
\[
C_{\alpha,I}\left( t, \left( p,\xi \right), \left( p^{\prime},\xi^{\prime} \right)\right)=A^I_{\alpha}\left( t, \left( p_I,\xi \right), \left( p_I^{\prime},\xi^{\prime} \right)\right) \exp \left(  \frac{\alpha }{4(\alpha-1)t} \|p-p'\|_{I^c}^2 \right) .
\]
Since this holds for any nonnegative bounded Borel functions $f: \mathbb{R}^I \times \mathbb{R}\to \mathbb{R}$ and $g: \mathbb{R}^{I^c} \to \mathbb{R}$, we conclude that the Wang-type Harnack inequality for $P_t$ holds for any nonnegative bounded Borel function $\mathbb{R}^d \times \mathbb{R} \to \mathbb{R}$.
\end{proof}

% omit $r=1$ integrated Harnack
\iffalse
As in Section \ref{s.fk}, we know this Wang-type Harnack inequality is equivalent to an integrated Harnack inequality for the heat kernel of the diffusion, which we express in the following Proposition. For $t>0$, let $p_t:\mathbb{R}^{d+1}\times\mathbb{R}^{d+1}\rightarrow\mathbb{R}$ denote the fundamental solution of the generator on $\mathbb{R}^{d+1}$, and also when appropriate $p_t=p_t(0,\cdot):\mathbb{R}^{d+1}\rightarrow \mathbb{R}$. The following result is an immediate consequence of Theorem \ref{t.whi2}.

\begin{prop}[Integrated Harnack inequality]
\label{p.gintharnack}
	Suppose that $F:\mathbb{R}^d\to\mathbb{R}$ satisfies Assumption \ref{A2}. Then for any $t>0$, $(p,\xi)\in\mathbb{R}^d \times \mathbb{R}$, and $q\in(1,\infty)$,
\begin{multline*}
\left(\int_{\mathbb{R}^d\times\mathbb{R}} \left[\frac{p_t((p,\xi),(p',\xi'))}{p_t(p',\xi')}\right]^{q} p_t(p',\xi')\,dp'\,d\xi' \right)^{1/q} \\
	\leqslant \exp \left(  \frac{(1+q)M}{4mt} \left(  \frac{12}{m^2t^2}  \left( \frac{mt}{2} \sum_{i\in I}p_i +\xi \right)^2+\|p\|_I^2\right)  \right)
	\exp \left(  \frac{1+q}{4t} \|p\|_{I^c}^2  \right),
\end{multline*}
\end{prop}
\fi

Now the same kind of tensorization argument from the proof of Theorem \ref{t.whi2} can be adapted to deal with certain functions $F$ taking values in $\R^r$. In particular, we make the following assumption.

\begin{assumption}\label{A3}
	For $F:\mathbb{R}^d\to\mathbb{R}^r$, there exist non-empty disjoint subsets $I_1,\ldots,I_r \subset \{ 1, \cdots, d\}$ and constants $m_1,M_1,\ldots,m_r,M_r >0$ such that for each $j=1,\ldots,r$, for every $i \in I_j$ and $p\in \mathbb{R}^d$,
\begin{align*}
m_j \leqslant \frac{\partial F_j}{\partial p_i} (p) \leqslant  M_j,
\end{align*}
and, for every $i \notin I_j$ and $p\in \mathbb{R}^d$,  $\frac{\partial F_j}{\partial p_i} (p) =0$.
\end{assumption}

Note that the assumption that the subsets $I_j$ are non-empty excludes $F$ having some constant coordinates, and again the lower bound on the derivatives implies more generally that the range of $F$ is unbounded in each coordinate. The disjointness of the $I_j$'s implies that the coordinates of $F(B)$ are independent.

\begin{theorem}[Wang-type Harnack inequality]\label{t.whi3}
	Suppose $F:\mathbb{R}^d\to\mathbb{R}^r$ satisfies Assumption \ref{A3}. Then for every nonnegative Borel bounded $f$ on $\mathbb{R}^d \times \mathbb{R}^r$, $t>0$, $(p,\xi),(p',\xi') \in \mathbb{R}^d \times \mathbb{R}^r$, and $\alpha >1$, we have
\begin{multline*}
(P_t f)^\alpha \left( p,\xi \right) \leqslant  \left(\prod_{j=1}^r A^j_{\alpha}\left( t, \left( p,\xi \right), \left( p^{\prime},\xi^{\prime} \right)\right) \right)\\ \times\exp \left(  \frac{\alpha }{4(\alpha-1)t} \|p-p'\|_{I^c}^2 \right)(P_t f^\alpha )(p^{\prime},\xi^{\prime}),
\end{multline*}
where $I^c:= (\cup_{j=1}^r I_j)^c$ and
\begin{multline*}
A^j_{\alpha}\left( t, \left( p,\xi \right), \left( p^{\prime},\xi^{\prime} \right)\right)
	:= A^j_{\alpha}\left( t, \left( p_{I_j},\xi_j \right), \left( p_{I_j}^{\prime},\xi_j^{\prime} \right)\right)\\
	:=\exp \left(  \frac{3\alpha M_j}{m_j^3(\alpha-1)t^3} \left( \frac{m_jt}{2}  \sum_{i\in I_j} (p_i-p_i') +(\xi_j-\xi_j')\right)^2\right) \\
	\times\exp\left(\frac{\alpha M_j}{4m_j(\alpha-1)t} \|p-p'\|_{I_j}^2  \right).
\end{multline*}
\end{theorem}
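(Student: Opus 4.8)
The plan is to mimic exactly the tensorization argument used in the proof of Theorem \ref{t.whi2}, but now splitting the state space according to all $r$ blocks $I_1,\ldots,I_r$ simultaneously. The key structural observation is that, because the subsets $I_j$ are pairwise disjoint and $\frac{\partial F_j}{\partial p_i}=0$ for $i\notin I_j$, each component $F_j$ depends only on the variables $p_{I_j}$; that is, there exist functions $F_j^{I_j}:\mathbb{R}^{I_j}\to\mathbb{R}$ with $F_j(p)=F_j^{I_j}(p_{I_j})$, and each $F_j^{I_j}$ satisfies Assumption \ref{A} on $\mathbb{R}^{I_j}$ with constants $m_j,M_j$. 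Consequently the coordinates $\xi_1,\ldots,\xi_r$ of the integral term are driven by disjoint, hence independent, families of Brownian coordinates.

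First I would write the generator $L=\tfrac12\Delta_p + F(p)\cdot\frac{\partial}{\partial\xi}$ and decompose it along the orthogonal splitting $\mathbb{R}^d=\bigoplus_{j=1}^r \mathbb{R}^{I_j}\oplus \mathbb{R}^{I^c}$. Since $F_j$ depends only on $p_{I_j}$ and since the Laplacian splits as $\Delta_p=\sum_{j=1}^r \Delta_{p_{I_j}}+\Delta_{p_{I^c}}$, the generator decomposes as a sum
\[
L=\sum_{j=1}^r L^{I_j} + \tfrac12\Delta_{p_{I^c}},
\]
where $L^{I_j}=\tfrac12\Delta_{p_{I_j}}+F_j^{I_j}(p_{I_j})\frac{\partial}{\partial\xi_j}$ is precisely a single-component generalized Kolmogorov generator on $\mathbb{R}^{I_j}\times\mathbb{R}$, and $\tfrac12\Delta_{p_{I^c}}$ generates an ordinary Gaussian heat semigroup $Q^{I^c}_t$ on $\mathbb{R}^{I^c}$. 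The summands act on disjoint groups of variables and therefore commute, so the semigroup factorizes on product functions: for $f_1\otimes\cdots\otimes f_r\otimes g$ one has
\[
P_t(f_1\otimes\cdots\otimes f_r\otimes g)(p,\xi)=\Big(\prod_{j=1}^r (P^{I_j}_t f_j)(p_{I_j},\xi_j)\Big)\,(Q^{I^c}_t g)(p_{I^c}).
\]

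Next I would apply Theorem \ref{t.whi} to each factor $P^{I_j}_t$, which yields the single-block Wang-type Harnack inequality with the constant $A^j_\alpha$ (after translating Lemma \ref{l.2.4}'s control-distance computation into the stated exponent, noting that the coefficient $\tfrac{3}{m_j^3 t^3}$ and $\tfrac{M_j}{4m_j t}$ in the statement are exactly $\tfrac{M}{4m t}\cdot\tfrac{12}{m^2 t^2}$ and $\tfrac{M}{4m t}$ specialized to block $j$), and apply the standard Gaussian Wang inequality to $Q^{I^c}_t$. Multiplying these $r+1$ inequalities together, using that each factor of $(P_t f)^\alpha$ is controlled by the corresponding factor of $P_t f^\alpha$ at the shifted point, gives the claim for product functions $f_1\otimes\cdots\otimes f_r\otimes g$.

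The final step is to pass from product functions to arbitrary nonnegative bounded Borel $f$ on $\mathbb{R}^d\times\mathbb{R}^r$. I expect this to be the main technical point, exactly as flagged in the proof of Theorem \ref{t.whi2}, where the passage from $f\otimes g$ to general $f$ is asserted. The cleanest justification is that the Wang-type inequality, being an inequality between the two linear functionals $f\mapsto (P_tf)^\alpha(p,\xi)$ and $f\mapsto C\cdot(P_tf^\alpha)(p',\xi')$ applied to the kernel representation, reduces via the integrated-Harnack reformulation (as in the equivalence between Theorem \ref{t.wang} and Proposition \ref{p.intharnack}) to a pointwise bound on the ratio of heat kernels, which factorizes over the independent blocks. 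Because the heat kernel $p_t$ on $\mathbb{R}^d\times\mathbb{R}^r$ is itself a product of the block kernels times the Gaussian kernel on $\mathbb{R}^{I^c}$ (by the independence of the driving coordinates), the product bound on ratios established for product functions is in fact a pointwise statement about kernels and hence immediately yields the Harnack inequality for all $f$ by Jensen/Hölder, with no loss. I would therefore either invoke this kernel-factorization directly or, following the style of Theorem \ref{t.whi2}, note that the product-function case determines the constant and that a monotone-class or density argument extends the bound to all nonnegative bounded Borel $f$.
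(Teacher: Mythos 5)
Your proposal is correct and follows essentially the same route the paper itself prescribes: the paper omits the proof of Theorem \ref{t.whi3} precisely because it is the tensorization argument of Theorem \ref{t.whi2}, based on the decomposition $L=\sum_{j=1}^r L^{I_j}+\sum_{i\notin\cup I_j}\frac{\partial^2}{\partial p_i^2}$, factorization of $P_t$ on product functions, Theorem \ref{t.whi} applied blockwise, and the Gaussian Wang inequality on $\mathbb{R}^{I^c}$. One remark: your kernel-factorization justification of the final passage from product functions to general nonnegative bounded Borel $f$ is the right one (and is needed, since $f\mapsto (P_tf)^\alpha(p,\xi)$ is not linear, so a bare density or monotone-class argument would not suffice), whereas the paper's proof of Theorem \ref{t.whi2} simply asserts this step; just note that these functionals are not literally ``linear'' as you wrote, though your argument via the integrated-Harnack equivalence and the product structure of the heat kernel over independent blocks does not use linearity.
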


We omit the proof here, but the argument follows exactly as in the proof of Theorem \ref{t.whi2}, relying on the key assumption that the coordinates of $F$ rely on \textit{disjoint} collections of coordinates, and thus the generator may be written as
\[ L= \sum_{j=1}^r L^{I_j} + \sum_{i\notin\cup I_j} \frac{\partial^2}{\partial p_i^2}
= \sum_{j=1}^r \left(\sum_{i\in I_j} \frac{\partial^2}{\partial p_i^2} + F_j(p)\frac{\partial}{\partial \xi_j}\right) + \sum_{i\notin\cup I_j} \frac{\partial^2}{\partial p_i^2}. \]

As in Section \ref{s.fk}, we know this Wang-type Harnack inequality is equivalent to an integrated Harnack inequality for the heat kernel of the diffusion, which we express in the following Proposition. For $t>0$, let $p_t:\mathbb{R}^{d+r}\times\mathbb{R}^{d+r}\rightarrow\mathbb{R}$ denote the fundamental solution, that is, the heat kernel, of the generator on $\mathbb{R}^{d+r}$.

\begin{prop}[Integrated Harnack inequality]
\label{p.gintharnack3}
	Suppose $F:\mathbb{R}^d\to\mathbb{R}^r$ satisfies Assumption \ref{A3}.Then
	for any $t>0$, $(p,\xi)\in\mathbb{R}^d \times \mathbb{R}^r$, and $q\in(1,\infty)$,
\begin{multline*}
\left(\int_{\mathbb{R}^d\times\mathbb{R}^r} \left[\frac{p_t((p,\xi),(p',\xi'))}{p_t(p',\xi')}\right]^{q} p_t(p',\xi')\,dp'\,d\xi' \right)^{1/q} \\
	\leqslant \left(\prod_{j=1}^r A_{j,q}(p,\xi)\right) \exp \left(  \frac{1+q}{4t} \|p\|_{I^c}^2  \right)
\end{multline*}
	where $I^c:= (\cup_{j=1}^r I_j)^c$ and
		\begin{multline*}
			A_{j,q}(p,\xi) := A_{j,q}(p_{I_j},\xi_j) \\:= \exp \left(  \frac{3(1+q)M_j}{m_j^3t^3}   \left( \frac{m_jt}{2}  \sum_{i\in I_j}p_i +\xi_j \right)^2  \right)
	\exp \left(  \frac{(1+q)M_j}{4m_jt} \|p\|_{I_j}^2  \right).
\end{multline*}
\end{prop}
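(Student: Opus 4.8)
The plan is to treat Proposition \ref{p.gintharnack3} exactly as Proposition \ref{p.intharnack} was handled in the Gaussian case: it is the ``integrated'' reformulation of the pointwise Wang-type Harnack inequality already established in Theorem \ref{t.whi3}, obtained through the standard equivalence recorded in \cite[Lemma 2.11]{BaudoinGordinaMelcher2013}. Concretely, I would fix $(p,\xi)$, take the two base points in Theorem \ref{t.whi3} to be $x=(p,\xi)$ and $y=(p',\xi')=0$, and set $q=1/(\alpha-1)$, so that $\alpha$ ranges over $(1,2)$ as $q$ ranges over $(1,\infty)$. The only genuine content is then the bookkeeping that the Wang constant, specialized to the origin, becomes the claimed product once one substitutes $\frac{\alpha}{\alpha-1}=1+q$.

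To see where that substitution comes from, I would reproduce the short duality argument underlying \cite[Lemma 2.11]{BaudoinGordinaMelcher2013}. Write $d\mu_x = p_t((p,\xi),\cdot)\,dz$ and $d\mu_0 = p_t(0,\cdot)\,dz$ (both probability measures), and let $\phi := d\mu_x/d\mu_0 = p_t((p,\xi),\cdot)/p_t(0,\cdot)$. The Wang inequality of Theorem \ref{t.whi3} reads $\left(\int f\,d\mu_x\right)^\alpha \leqslant C_\alpha \int f^\alpha\,d\mu_0$ for all nonnegative bounded Borel $f$, where $C_\alpha := \big(\prod_{j=1}^r A^j_\alpha(t,(p,\xi),0)\big)\exp\!\big(\tfrac{\alpha}{4(\alpha-1)t}\|p\|_{I^c}^2\big)$. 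Testing this against $f=\phi^{\,\alpha'-1}$ (with $\alpha'=\alpha/(\alpha-1)$ the conjugate exponent, suitably truncated and then passing to the limit by monotone convergence) saturates Hölder's inequality, since $\int f\,d\mu_x = \int \phi^{\alpha'}\,d\mu_0 = \int f^\alpha\,d\mu_0$. This yields $\int \phi^{\alpha'}\,d\mu_0 \leqslant C_\alpha^{1/(\alpha-1)}$, i.e. $\big(\int \phi^{\,1+q}\,d\mu_0\big)^{1/q}\leqslant C_\alpha$ after using $\alpha'=1+q$ and $1/(\alpha-1)=q$. Finally, evaluating $A^j_\alpha(t,(p,\xi),0)$ and the $I^c$-factor and replacing every occurrence of $\frac{\alpha}{\alpha-1}$ by $1+q$ turns $C_\alpha$ into precisely $\big(\prod_{j=1}^r A_{j,q}(p,\xi)\big)\exp\!\big(\tfrac{1+q}{4t}\|p\|_{I^c}^2\big)$, which is the asserted right-hand side.

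The step I expect to require the most care is not the algebra but verifying that the general equivalence applies to the present non-Gaussian generator, i.e. that the object $\phi$ is well defined and the saturating test function is admissible. For this one must know that $p_t$ exists as a smooth, strictly positive density. Smoothness follows from hypoellipticity: under Assumption \ref{A3} the fields $\partial_{p_i}$ together with the brackets $[\partial_{p_i},F_j(p)\partial_{\xi_j}]=\frac{\partial F_j}{\partial p_i}\partial_{\xi_j}$ span the tangent space, since $\frac{\partial F_j}{\partial p_i}\geqslant m_j>0$ for $i\in I_j$ and the $\partial_{p_i}$ with $i\in I^c$ are present directly, so H\"ormander's condition holds. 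Strict positivity, needed so that $\phi<\infty$ $\mu_0$-a.e. and the optimizer is finite, follows from the tensorized product structure of the semigroup exhibited in the proof of Theorem \ref{t.whi2}/\ref{t.whi3}, together with positivity of each Gaussian and each one-dimensional Kolmogorov-type factor. Once these are in place, the truncation argument is routine and the proposition follows; I would state these existence/positivity facts explicitly (or cite them) before invoking the equivalence, just as in the derivation of Proposition \ref{p.intharnack}.
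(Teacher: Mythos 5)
Your proposal follows exactly the paper's route: the paper gives no standalone proof of Proposition \ref{p.gintharnack3}, but presents it (just as it did Proposition \ref{p.intharnack}) as an immediate consequence of the Wang-type Harnack inequality of Theorem \ref{t.whi3} through the equivalence recorded in \cite[Lemma 2.11]{BaudoinGordinaMelcher2013}, taking $y=0$ and $q=1/(\alpha-1)$ with $\alpha\in(1,2)$, which is precisely your substitution $\frac{\alpha}{\alpha-1}=1+q$, and your duality computation and constant bookkeeping are correct. One line should be added to close a small mismatch: writing $\phi$ for the kernel ratio and $d\mu_0=p_t(0,\cdot)\,dp'\,d\xi'$, your argument produces $\left(\int \phi^{1+q}\,d\mu_0\right)^{1/q}\leqslant C_\alpha$, whereas the proposition's left-hand side is $\left(\int \phi^{q}\,d\mu_0\right)^{1/q}$; this is harmless because Jensen gives $\int\phi^{1+q}\,d\mu_0\geqslant\left(\int\phi\,d\mu_0\right)^{1+q}=1$, and then H\"older gives $\int\phi^{q}\,d\mu_0\leqslant\left(\int\phi^{1+q}\,d\mu_0\right)^{q/(1+q)}\leqslant\int\phi^{1+q}\,d\mu_0$, so your (stronger) inequality implies the stated one. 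Your closing verification of existence and positivity of $p_t$ goes beyond what the paper records (it simply takes the heat kernel for granted); the only caveat there is that H\"ormander's theorem requires $C^\infty$ coefficients while $F$ is only assumed to be $C^1$, a regularity point the paper itself leaves implicit.
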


With this result in hand, we are now ready to introduce the infinite-dimensional diffusions of interest.

\section{Infinite-dimensional results}
\label{s.infinite}

Let $(W,H,\mu)$ be an abstract Wiener space, where $W$ is a separable Banach space equipped with Gaussian measure $\mu$ and $H$ is the associated Cameron-Martin Hilbert space. For background about abstract Wiener spaces, see for example \cite{BogachevGaussianMeasures} or \cite{KuoLNM1975}. We use $\|\cdot\|_H$ and $\|\cdot\|_W$ for the norms on $H$ and $W$, respectively. The inner product on $H$ is denoted by $\langle \cdot, \cdot \rangle_H$.

As usual, $H^{\ast}$ denotes the dual space of $H$ which is of course isomorphic to $H$. Furthermore, let $H_\ast$ be the set of $h\in H$ such that  $\left\langle \cdot, h\right\rangle _{H}\in H^{\ast}$ extends to a continuous linear functional on $W$.  We will continue to denote the continuous extension of $\left\langle \cdot, h\right\rangle _{H}$ to $W$ by $\left\langle \cdot, h\right\rangle_{H}$. Equivalently, we can define $H_\ast$ as follows. Let $i:H\rightarrow W$ be the inclusion map, and let $i^*:W^*\rightarrow H^*$ be its transpose so that $i^*\ell:=\ell\circ i$ for all $\ell\in W^*$.  Then
\[
H_{\ast} = \left\{ h\in H: \langle\cdot, h\rangle_H\in \operatorname{Range}(i^{\ast})\subset H^{\ast} \right\}.
\]
Note that since $H$ is a dense subspace of $W$, $i^{\ast}$ is injective and thus has a dense range.  For any $h \in H$ the map $h\mapsto\langle\cdot, h\rangle_H \in H^{\ast}$ is a linear isometric isomorphism, therefore it follows that if $h \in H_{\ast}$ then $h \mapsto \langle\cdot, h\rangle_H\in W^{\ast}$ is a linear isomorphism also, and so $H_{\ast}$ is a dense subspace of $H$.

Suppose  that $P:H\rightarrow H$ is a finite rank orthogonal projection
such that $PH\subset H_{\ast}$. Let $\left\{  e_{j}\right\}_{j=1}^{n}$ be an
orthonormal basis for $PH$ and $\ell_{j}:=\left\langle \cdot, e_{j}\right\rangle_{H}\in W^{\ast}$. Then we may extend $P$ to a unique continuous operator from $W$ $\rightarrow H$ (still denoted by $P$) by letting
\begin{equation}\label{e.proj}
Pw:=\sum_{j=1}^{n}\left\langle w, e_{j}\right\rangle _{H}e_{j}=\sum_{j=1}^{n}\ell_{j}\left(  w\right)  e_{j}\text{ for all } w\in W.
\end{equation}
For more details on these projections see \cite{DriverGordina2008}.

\begin{notation}
\label{n.proj}
Let $\mathrm{Proj}(W)$ denote the collection of finite rank projections
on $W$ such that
\begin{enumerate}
\item $PW\subset H_*$ and
\item $P|_H:H\rightarrow H$ is an orthogonal projection, that is, $P$ has the form given in equation \eqref{e.proj}.
\end{enumerate}
\end{notation}

We will say a function $f:W \rightarrow\mathbb{R}$ is a (smooth) cylinder function if it may be written as $f=\phi\circ P$ for some $P\in\operatorname*{Proj}\left(  W\right)$ and
some (smooth) function $\phi:\mathbb{R}^{n} \rightarrow \mathbb{R}$, where $n$ is the rank of $P$.

Given $f:W \rightarrow\mathbb{R}$, we say that $f$ is $H$-differentiable at $w\in W$ if $\varphi(h):= f(w+h)$, considered as a function on $H$, is Fr\'{e}chet differentiable at $0$. If $f$ is $H$-differentiable on $W$, we denote by $\nabla f:W\to H$ the mapping defined by
\[ \langle\nabla f(w),h\rangle_H=\partial_h f(w)= \frac{d}{d\varepsilon}\bigg|_0f(w+\varepsilon h). \]
In particular,  let $\{ e_{k} \}_{k=1}^{\infty}$ be an orthonormal basis of $H$ such that $e_{k} \in H_{\ast}$ for all $k$ and define $\ell_k(w)=\langle w,e_k\rangle_H$. For each $n$, let $H_n$ be the span of $\{e_1, \ldots, e_n\}$ identified with $\mathbb{R}^{n}$, and define $P_{n}\in \operatorname*{Proj}\left(  W\right)$
by
\[
P_{n} : W \rightarrow H_{n}  \subset H_{\ast} \subset H
\]
as in \eqref{e.proj}. Then for a cylinder function of the form $f(w)=\phi\circ P_n(w)=\phi(\ell_1(w),\dots,\ell_n(w))$
\[ \nabla f(w) =\sum_{i=1}^n (\partial_{e_i}\phi)(\ell_1(w),\dots,\ell_n(w))e_i. \]
Similarly we can define the second $H$-derivative $\nabla^{2}$, and finally
\[
\Delta f\left( x \right):=\operatorname{tr} \nabla^{2} f \left( x \right)
\]
whenever $\nabla^{2} f \left( x \right)$ exists and is of trace class.

Let $\mathcal{B}(W)$ be the Borel $\sigma$-algebra on $W$. For $t \geqslant 0$, let $\mu_{t}$ be the rescaled measure $\mu_{t}\left( A \right): = \mu\left( A/\sqrt{t} \right)$ with $\mu_{0}=\delta_{0}$. As was first noted by  L.~Gross in \cite[p. 135]{Gross1967a}, there  exists a stochastic process $\{B_t\}_{t\geq 0}$ with values in $W$ which is a.s.~continuous
in $t$ with respect to the norm topology on $W$, has independent increments, and, for $s<t$,
 $\mathrm{Law}(B_{t}-B_{s})=\mu_{t-s}$, with $B_{0}=0$ a.s. $\{B_{t}\}_{t\ge0}$ is called \emph{standard Brownian motion} on $\left( W, \mu \right)$.

For $A \in \mathcal{B}\left( W \right)$, let $ \mu_{t}\left( x, A \right):=\mu_{t}\left( x- A \right)$.
It is well known that $\{\mu_t\}$ forms a family of Markov transition
kernels, and we may thus view $(B_t, \mathbb{P}^x)$ as a strong Markov
process with state space $W$, where $\mathbb{P}^x$ is the law of
$x+B$.

\subsection{Infinite-dimensional Kolmogorov diffusion}
\label{s.infK}

We now consider $W \times W$ as an abstract Wiener space with respect to the product topology on $W\times W$ with product measure $\mu\otimes\mu$. %Given $P\in\mathrm{Proj}(W)$, define $\pi_P: W \times W\rightarrow PH\times PH$ by $\pi_P(w_1,w_2) = (Pw_1,Pw_2)$.
A cylinder function $f:W\times W\rightarrow \R$ is defined analogously as before, that is,
\begin{equation}\label{e.cyl}
	f(p,\xi)=\phi(Pp,P\xi)
\end{equation} for some $\phi:PH\times PH\to\R$ and $P\in\mathrm{Proj}(W)$.

Let $\{e_j\}_{j=1}^\infty\subseteq H_*$ be an orthonormal basis for $H$. Then, for any smooth cylinder function $f$ on $W \times W$ of the form (\ref{e.cyl}), we define
\begin{align*}
& \nabla^\xi f \left( p, \xi\right) := \sum_{j=1}^{\infty} (\partial_{(0,e_j)} f)\left( p, \xi \right) e_j
	= \nabla^\xi_{PH} \phi \left( Pp, P\xi\right),
\\
&  \nabla^p f \left( p, \xi\right) := \sum_{j=1}^{\infty} (\partial_{(e_j,0)} f)\left( p, \xi \right) e_j
	= \nabla^p_{PH} \phi \left( Pp, P\xi\right),
\\
&\Delta^p f \left( p, \xi \right) := \sum_{j=1}^\infty (\partial_{(e_j,0)}^2f) \left( p, \xi \right)
	= \Delta^p_{PH} \phi \left( Pp, P\xi \right).
\end{align*}
Note that the sums in these definitions are finite as we only apply these operators to cylinder functions. Similarly, we may define the operators
\begin{align}\label{e.generator}
 \left( Lf \right) \left( p, \xi \right)
	&:= \sum_{j=1}^{\infty} p_{j}\partial_{(0,e_j)} f\left( p, \xi \right)
		+\frac{1}{2} \sum_{j=1}^\infty \partial_{(e_j,0)}^2f \left( p, \xi \right)
\end{align}
and $\Gamma(f):= \frac{1}{2}Lf^2 -fLf$. Note that for $f$ a cylinder function of the form (\ref{e.cyl})
\begin{align*}
 \left( Lf \right) \left( p, \xi \right)
	&= \langle Pp , \nabla^\xi_{PH} \phi \left( Pp, P\xi\right) \rangle+\frac{1}{2} \Delta^p_{PH} \phi \left( Pp, P\xi \right)
\end{align*}
and
\[ \Gamma(f) = \frac{1}{2} \| \nabla^p_{PH} \phi \|_{PH}^2, \]
and thus $L$ and $\Gamma$ are well-defined independent of the choice of basis in the definition.

We now define an infinite-dimensional Kolmogorov diffusion, namely,
\[
X_t := \left(B_t, \int_0^t B_s\,ds\right).
\]
Using an analogous computation to that in  \cite[Appendix]{MarianoPhDThesis2018}, one may verify that the operator $L$ defined by \eqref{e.generator} is the generator of the diffusion $X_{t}$.

\begin{proposition}
\label{p.approx}
	Let $\{P_n\}_{n=1}^\infty\subset \mathrm{Proj}(W)$ such that $P_n\uparrow I_H$, and consider the processes $\{B_n(t)\}_{t\ge0}$ in $P_nH$ and $\{X_n(t)\}_{t\ge0}$ in $P_nH\times P_nH$ defined by
\[ B_n(t) := P_nB(t) \]
and
\[ X_n(t) := \left(B_n(t), \int_0^t B_n(s)\,ds\right). \]
Then
	\[ \lim_{n\rightarrow\infty} \mathbb{E}\left[\max_{0\leqslant t\leqslant T} \|X(t)-X_n(t)\|_{W\times W}^p \right] = 0 \]
for all $p\in[1,\infty)$ and
	\[ \lim_{n\rightarrow\infty} \max_{0\leqslant t\leqslant T} \|X(t)-X_n(t)\|_{W\times W} =0 \text{ a.s.} \]
\end{proposition}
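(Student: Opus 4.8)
The plan is to reduce both assertions to control of the single quantity
\[
M_n := \sup_{0\le t\le T}\|(I-P_n)B(t)\|_W,
\]
and then to handle $M_n$ by martingale methods. Since $I-P_n$ is a bounded linear operator on $W$ and bounded operators commute with the Bochner integral, I have $B(t)-B_n(t)=(I-P_n)B(t)$ and $\int_0^t B(s)\,ds-\int_0^t B_n(s)\,ds=(I-P_n)\int_0^t B(s)\,ds$. Using a product norm on $W\times W$ equivalent to $(a,b)\mapsto \|a\|_W+\|b\|_W$ together with the elementary estimate
\[
\Big\|\int_0^t (I-P_n)B(s)\,ds\Big\|_W \le \int_0^T \|(I-P_n)B(s)\|_W\,ds \le T\,M_n,
\]
I would obtain $\max_{0\le t\le T}\|X(t)-X_n(t)\|_{W\times W}\le (1+T)\,M_n$ (up to the norm-equivalence constant). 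Thus it suffices to prove $\mathbb{E}[M_n^p]\to 0$ for all $p\in[1,\infty)$ and $M_n\to 0$ almost surely.

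For the $L^p$ bound I would use that $t\mapsto B(t)$ is a $W$-valued martingale (independent, mean-zero increments), hence so is $t\mapsto(I-P_n)B(t)$, and therefore $t\mapsto\|(I-P_n)B(t)\|_W$ is a nonnegative continuous real submartingale. Doob's $L^p$ maximal inequality then gives, for $p>1$, $\mathbb{E}[M_n^p]\le (p/(p-1))^p\,\mathbb{E}\|(I-P_n)B(T)\|_W^p$, with $p=1$ following from the case $p=2$ by H\"older. It remains to show that the fixed-time quantity $\mathbb{E}\|(I-P_n)B(T)\|_W^p=\int_W\|(I-P_n)w\|_W^p\,d\mu_T(w)$ tends to $0$. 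Here I would observe that, with respect to $\mu_T$, the map $w\mapsto P_n w$ is exactly the conditional expectation $\mathbb{E}[\mathrm{id}_W\mid\sigma(\ell_1,\dots,\ell_n)]$ of the identity (from $\mathbb{E}_{\mu_T}[\ell_j(w)\,w]=T e_j$ and $\mathbb{E}_{\mu_T}[\ell_j^2]=T$). By the vector-valued martingale convergence theorem this forward martingale converges $\mu_T$-a.e.\ and in $L^p$ to $\mathrm{id}_W$, so $(I-P_n)w\to 0$ for $\mu_T$-a.e.\ $w$; moreover Doob's inequality bounds $\sup_n\|P_n w\|_W$ in $L^p(\mu_T)$ and Fernique's theorem gives $\|w\|_W\in L^p(\mu_T)$, so $\|(I-P_n)w\|_W^p$ is dominated by an integrable function. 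Dominated convergence then yields $\mathbb{E}[M_n^p]\to 0$.

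For the almost sure statement the fixed-time convergence above is insufficient, and upgrading it to uniform-in-time control is the main obstacle. I would resolve it by regarding $B$ as a random element of the separable Banach space $E=C([0,T];W)$, which has $\mathbb{E}\|B\|_E^p<\infty$ by the submartingale/Doob bound and Fernique. Writing $B(s)=\sum_j\beta_j(s)e_j$ with $\beta_j(s)=\langle B(s),e_j\rangle_H$ independent real Brownian motions, I would set $\mathcal{G}_n:=\sigma(\beta_j(s):s\in[0,T],\ j>n)$, a decreasing family, and verify the $E$-valued identity $(I-P_n)B=\mathbb{E}[B\mid\mathcal{G}_n]$, since $P_nB$ depends only on the coordinates $j\le n$, which are independent of $\mathcal{G}_n$ and mean zero. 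Consequently $M_n=\|\mathbb{E}[B\mid\mathcal{G}_n]\|_E$ is, by the conditional Jensen inequality, a nonnegative backward submartingale in $n$, so the backward submartingale convergence theorem shows $M_n$ converges almost surely to some limit; that limit must be $0$ because $M_n\to 0$ in $L^1$ by the previous step. I expect the technical heart to be the Bochner conditional-expectation identity $(I-P_n)B=\mathbb{E}[B\mid\mathcal{G}_n]$ in path space and the justification of backward submartingale convergence for $E$-valued objects; as an alternative, one may instead invoke the analogous finite-dimensional approximation lemmas established in \cite{DriverGordina2008,BaudoinGordinaMelcher2013}.
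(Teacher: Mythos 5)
Your proposal is correct, but it takes a genuinely different route from the paper. The paper's proof is essentially two lines: it quotes \cite[Proposition 4.6]{DriverGordina2008} for the two convergences $\max_{0\leqslant s\leqslant T}\|B(s)-B_n(s)\|_W\to 0$ almost surely and in $L^p$, and then controls the time-integral component exactly as you do, via $\max_{0\leqslant t\leqslant T}\bigl\|\int_0^t(B(s)-B_n(s))\,ds\bigr\|_W\leqslant T\max_{0\leqslant s\leqslant T}\|B(s)-B_n(s)\|_W$. Everything after your first paragraph is therefore a self-contained re-derivation of the cited lemma. Your $L^p$ step is sound: Doob's maximal inequality for the real submartingale $\|(I-P_n)B(t)\|_W$, the identification of $P_n$ with conditional expectation under $\mu_T$, vector-valued forward martingale convergence, and Fernique's theorem for domination. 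Your almost-sure step replaces the route through convergence of sums of independent symmetric $C([0,T];W)$-valued summands by a backward submartingale argument; this works, but the identity $(I-P_n)B=\mathbb{E}[B\mid\mathcal{G}_n]$, which you rightly flag as the technical heart, requires in particular that $(I-P_n)B$ be $\mathcal{G}_n$-measurable, i.e.\ the almost-sure representation $B(s)=\sum_j\beta_j(s)e_j$ in $W$. This is not circular: the fixed-time representation already follows from your forward martingale step, and measurability of the path $(I-P_n)B$ as a $C([0,T];W)$-valued random variable then follows because the Borel $\sigma$-algebra of $C([0,T];W)$ is generated by evaluations at a countable dense set of times (using path continuity); the defining orthogonality against $\mathcal{G}_n$-measurable events follows from independence and mean zero of $P_nB$. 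With that filled in, your nonnegative backward submartingale $M_n$ converges almost surely, and the limit is $0$ since $\mathbb{E}[M_n]\to 0$ by the $L^p$ step. In terms of trade-offs: the paper's argument is short and leans on the literature (as your own closing sentence suggests one may do), while yours is self-contained and exhibits the approximation result as a consequence of general martingale principles, at the cost of the measurability bookkeeping above.
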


\begin{proof}
It is show in \cite[Proposition 4.6]{DriverGordina2008} that $B_{n}\left(  t\right):=P_{n}B\left(  t\right)  \in P_{n}H \subset H \subset W$ gives a natural approximation to $B_t$.
Namely, there it is proved that
	\begin{equation}\label{e.BMconv} \lim_{n\rightarrow\infty} \max_{0\leqslant s\leqslant T} \|B(s)-B_n(s)\|_{W} =0 \text{ a.s.}
	\end{equation}
	and
		\begin{equation}\label{e.Lpconv} \lim_{n\rightarrow\infty} \mathbb{E}\left[ \max_{0\leqslant s\leqslant T} \|B(s)-B_n(s)\|_W^p   \right]=0\end{equation}
for all $p\in[1,\infty)$.
	Both convergences for the integral are then a straightforward consequence, since %a.s.
	\begin{align*}
		\max_{0\leqslant t\leqslant T} \left\|\int_0^t B(s)\,ds -\int_0^t B_n(s)\,ds\right\|_{W}^p
		&\leqslant \max_{0\leqslant t\leqslant T} \int_0^t \|B(s)-B_n(s)\|_W^p \,ds   \\
		&=  \int_0^T \|B(s)-B_n(s)\|_W^p \,ds   \\
		&\leqslant  T\cdot\max_{0\leqslant s\leqslant T} \|B(s)-B_n(s)\|_W^p,	\end{align*}
and thus the desired convergence of $X_n$ follows.
%and similarly
%	\begin{align*}
%		\mathbb{E}\left[\max_{0\leqslant t\leqslant T} \left\|\int_0^t B(s)\,ds -\int_0^t B_n(s)\,ds\right\|_{W}^p \right]
%		&\leqslant T \mathbb{E}\left[ \max_{0\leqslant s\leqslant T} \|B(s)-B_n(s)\|_W^p   \right].	\end{align*}
%	\begin{align*}
%\left\|\left(B-B_n,\int_0^t B-B_n \,ds\right)\right\|_{W\times W}^p
%		 &= \left(\left\|B-B_n\right\|_W^2 + \left\|\int_0^t B-B_n \,ds\right\|_{W}^2\right)^{p/2} \\
%		 &\leqslant \left(2^{p-1}\left(\left\|B-B_n\right\|_W^{2p} + \left\|\int_0^t B-B_n \,ds\right\|_{W}^{2p}\right)\right)^{1/2}
%	\end{align*}
\end{proof}

We now state the Cameron-Martin type quasi-invariance result for $\nu_t=\mathrm{Law}(X_t)$. We prove this as an application of the main theorem in \cite{Gordina2017}. However, to see how a direct proof would work, the reader may see the proof of Theorem \ref{t.gkqi3} in the next subsection.

\begin{theorem}
	\label{t.kqi}
	For any fixed $t>0$, the measure $\nu_t=\mathrm{Law}(X_t)$ on $W\times W$ is quasi-invariant under the action by elements of the group $H\times H$ given by
	\[ \Phi_t^{(h,k)}(p,\xi) := (p + h, \xi+k+th). \]
	Moreover, the Radon-Nikodym derivative of $d\nu_t^{h,k}:=(\Phi_t^{(h,k)})_*\nu_t$ with respect to $\nu_t$ satisfies, for any $q\in(1,\infty)$,
	\[ \left\|\frac{d\nu_t^{h,k}}{d\nu_t}\right\|_{L^q(W\times W,\nu_t)}
		\leqslant \exp\left(\frac{3(1+q)}{4-\sqrt{13}}\left(\frac{\|h\|_H^2}{t}+\frac{\|k\|_H^2}{t^3}\right)\right). \]
\end{theorem}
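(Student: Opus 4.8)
The plan is to deduce this from the abstract quasi-invariance theorem of \cite{Gordina2017}, whose essential hypothesis is precisely a \emph{dimension-independent} integrated Harnack inequality for a sequence of finite-dimensional approximations. All of the ingredients are already in hand: the approximating processes $X_n(t)$ of Proposition \ref{p.approx} are finite-dimensional Kolmogorov diffusions on $P_nH\times P_nH\cong\mathbb{R}^n\times\mathbb{R}^n$, and since $P_n$ is linear and continuous one has $X_n(t)=(P_n\times P_n)X(t)$, so their laws satisfy $\nu_t^n:=\operatorname{Law}(X_n(t))=(P_n\times P_n)_*\nu_t$. The uniform bound we need is supplied by Proposition \ref{p.intharnack}, whose constant $\frac{3(1+q)}{4-\sqrt{13}}$ does not depend on $n$.

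First I would record the finite-dimensional Radon--Nikodym derivatives and their uniform $L^q$ bounds. Fix $(h,k)\in H\times H$ and set $h_n:=P_nh$, $k_n:=P_nk$. Because $X_t^{(h,k)}=X_t+(h,k+th)$, the shift $\Phi_t^{(h_n,k_n)}$ corresponds to starting the $n$-th diffusion at $(h_n,k_n)$, so the pushforward $\nu_t^{n,(h_n,k_n)}:=(\Phi_t^{(h_n,k_n)})_*\nu_t^n$ has density
\[ J_n(p',\xi'):=\frac{d\nu_t^{n,(h_n,k_n)}}{d\nu_t^n}(p',\xi')=\frac{p_t^n((h_n,k_n),(p',\xi'))}{p_t^n(p',\xi')}. \]
Applying Proposition \ref{p.intharnack} with starting point $(h_n,k_n)$ and using $\|P_nh\|_H\leqslant\|h\|_H$ yields
\[ \|J_n\|_{L^q(\nu_t^n)}\leqslant\exp\left(\frac{3(1+q)}{4-\sqrt{13}}\left(\frac{\|h_n\|_H^2}{t}+\frac{\|k_n\|_H^2}{t^3}\right)\right)\leqslant\exp\left(\frac{3(1+q)}{4-\sqrt{13}}\left(\frac{\|h\|_H^2}{t}+\frac{\|k\|_H^2}{t^3}\right)\right), \]
a bound uniform in $n$.

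The remaining step is the passage to the limit, which is exactly what \cite{Gordina2017} automates. Lifting $J_n$ to $W\times W$ via $(P_n\times P_n)$, the identity $\nu_t^n=(P_n\times P_n)_*\nu_t$ shows that the lifts are bounded in $L^q(\nu_t)$ by the same dimension-free constant; since $q>1$ the space $L^q(\nu_t)$ is reflexive, so a subsequence converges weakly to some $J\in L^q(\nu_t)$. Testing against bounded cylinder functions and using the weak convergence $\nu_t^{n,(h_n,k_n)}\to\nu_t^{(h,k)}$ (a consequence of $X_n(t)+(h_n,k_n+th_n)\to X(t)+(h,k+th)$ from Proposition \ref{p.approx}) identifies $J$ as $d\nu_t^{(h,k)}/d\nu_t$, giving $\nu_t^{(h,k)}\ll\nu_t$; the same argument applied to $(-h,-k)$ yields the reverse absolute continuity, hence full quasi-invariance. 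The claimed $L^q$ estimate then follows from lower semicontinuity of the norm under weak limits, $\|J\|_{L^q(\nu_t)}\leqslant\liminf_n\|J_n\|_{L^q(\nu_t^n)}$. I expect the main obstacle to be bookkeeping rather than analytic: one must verify that the affine shift $\Phi_t^{(h,k)}(p,\xi)=(p+h,\xi+k+th)$---in particular the cross-term $th$ coupling the two copies of $W$, which reflects the absence of any underlying group structure---is faithfully realized as ``start the diffusion at $(h,k)$'' at every finite level and is compatible with projecting the shift directions through $P_n$, so that the heat-kernel ratios $J_n$ genuinely are the finite-dimensional densities and the hypotheses of \cite{Gordina2017} are met. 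Once this compatibility is confirmed, the dimension-independence of the constant in Proposition \ref{p.intharnack} does all the remaining work.
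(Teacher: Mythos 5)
Your proposal is correct and takes essentially the same approach as the paper: the paper's proof likewise treats $\Phi_t$ as a measurable group action of $H\times H$, uses the finite-dimensional approximations of Proposition \ref{p.approx} together with the dimension-independent integrated Harnack bound of Proposition \ref{p.intharnack}, and cites Theorem 3.2 of \cite{Gordina2017} to perform the passage to the limit. The limiting argument you write out explicitly (weak compactness in $L^q(\nu_t)$, identification of the weak limit on cylinder functions, and lower semicontinuity of the norm) is precisely the content of that cited theorem, and it mirrors the paper's own direct proof of Theorem \ref{t.gkqi3}.
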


\begin{proof}
For fixed $t>0$, define $\Phi_t:(H\times H)\times(W\times W) \rightarrow W\times W$ by
\[ \Phi_t((h_1,h_2),(x_1,x_2)) := (x_1+h_1,x_2+h_2 + th_1). \]
For each $t$, $\Phi_t$ defines a measurable group action of the abelian group $H\times H$ on $W\times W$.
Thus, in light of the estimates in Section \ref{s.fk}, the proof is a straightforward application of Theorem 3.2 of \cite{Gordina2017}. We fix any sequence of projections $\{P_n\}_{n=1}^\infty\subset \mathrm{Proj}(W)$ such that $P_n\uparrow I_H$ and let $H_n=P_nH$. In the context of \cite{Gordina2017}, $H_n\times H_n$ plays the role of both the finite-dimensional unimodular Lie subgroup of $H\times H$ and the sequence of topological subspaces of $W\times W$ such that $\cup_{n=1}^\infty (H_n\times H_n)$ is dense in $W\times W$.
	Let $\nu^n_t:= \mathrm{Law}(X^n_t)$ on $H_n\times H_n$. Proposition \ref{p.approx} implies that
	\[ \int_{W\times W} f\,d\nu_t = \lim_{n\rightarrow\infty} \int_{H_n\times H_n} (f\circ j_n)\,d\nu^n_t \]
	where $j_n:H_n\times H_n\rightarrow W\times W$ is the continuous injection map. Proposition \ref{p.intharnack} provides the necessary uniform estimates on the Radon-Nikodym derivatives of the ``shifted'' measures in the finite-dimensional approximations (see (3.3) in \cite{Gordina2017}), and this completes the proof.
\end{proof}

\begin{remark}\label{r.CMM}
One can contrast this approach with a proof of quasi-invariance through an application of the Cameron-Martin-Maruyama theorem on path space.

	For $t>0$ and for $h,k\in H$, let $X_t^{h,k}:=\Phi_t^{(h,k)}(X_t)$.
%	\[ X_t^{h,k}:=\Phi_t^{(h,k)}(X_t) = \left(B_t+h,\int_0^t (B_s+h)\,ds+k\right). \]
	Let $\mathcal{W}_t$ denote the space of continuous paths $\omega:[0,t]\rightarrow W$ with $\omega(0)=0$, equipped with the Gaussian measure $\mu=\mathrm{Law}(B)$, and let $\mathcal{H}_t$ denote the associated Cameron-Martin subspace of finite-energy paths taking values in $H$.
Let $f$ be a bounded continuous function on $W\times W$. Then for any $h,k\in H$, the Cameron-Martin-Maruyama theorem on $\mathcal{W}_t$ implies that, for any $\gamma\in \mathcal{H}_t$, the translation $B\mapsto B +\gamma$ in $\mathcal{W}_t$ gives
\begin{multline*}
	\mathbb{E}[f(X_t^{h,k})]
	= \mathbb{E}\left[f\left(B_t + h,\int_0^t (B_s +h)\,ds+k \right)\right] \\
	= \mathbb{E}\left[f\left(B_t+\gamma(t)+h,\int_0^t (B_s +\gamma(s)+h)\,ds + k \right)
		J_t^{\gamma}(B)\right],
\end{multline*}
where
\[ J_t^\gamma(w) = \exp\left(\int_0^t \langle \dot{\gamma}(s),dw(s)\rangle
	- \frac{1}{2}\int_0^t \|\dot{\gamma}(s)\|_H^2\,ds\right). \]
We know that
\[ \mathbb{E}\left[\exp\left(q\int_0^t \langle \dot{\gamma}(s),dB_s\rangle\right)
	\right]
	= \exp\left(\frac{q^2}{2}\int_0^t \|\dot{\gamma}(s)\|_H^2\,ds\right)
	= \exp\left(\frac{q^2}{2}\|\gamma\|_{\mathcal{H}_t}^2\right) \]
and so
\begin{align*} \mathbb{E}\left[J_t^\gamma(B)^q\right]
	&=  \mathbb{E}\left[\exp\left(q\int_0^t \langle \dot{\gamma}(s),dB_s\rangle\right)
	\right]
	\exp\left(- \frac{q}{2}\int_0^t \|\dot{\gamma}(s)\|_H^2\,ds\right)\\
	&= \exp\left(\frac{q^2-q}{2}\|\gamma\|_{\mathcal{H}_t}^2\right).
\end{align*}
For example, for the path $\gamma(s) = sa + s^2b$ with
%satisfying
%	\[ \gamma(t) = ta+t^2b=-h \qquad\text{ and } \qquad \int_0^t\gamma(s)\,ds = \frac{1}{2}t^2a + \frac{1}{3}t^3b =-ht-k \]
	\[ a = -\frac{4}{t}h - \frac{6}{t^2}k \qquad\text{ and } \qquad
		b = \frac{3}{t^2}h + \frac{6}{t^3}k, \]
we have
\[ \mathbb{E}[f(X_t^{(h,k)})] = \mathbb{E}\left[f\left(B_t,\int_0^t B_s\,ds \right)
	J_t^{\gamma}(B)\right]. \]
%%
\iffalse
where
\begin{align*}
\mathbf{J}_t^\gamma(B) &= \exp\left(\int_0^t \langle a + 2sb,dB_s\rangle
	- \frac{1}{2}\int_0^t |a + 2sb|_H^2\,ds\right) \\
	&= \exp\left(\int_0^t \langle a + 2sb,dB_s\rangle
	- \frac{1}{2}\left(t|a|^2_H +2t^2\langle a,b\rangle + \frac{4}{3}t^3|b|^2_H \right)\right)
\end{align*}
\begin{align*}
	\dot{\gamma}(s) = a+2sb &= \left( -\frac{4}{t}h - \frac{6}{t^2}k\right)
		+ 2s\left(\frac{3}{t^2}h + \frac{6}{t^3}k\right) \\
		&= \left(\frac{6}{t^2}s-\frac{4}{t}\right)h
			+ \left(\frac{12}{t^3}s-\frac{6}{t^2}\right)k
\end{align*}
	We have that
\begin{align*}
	\|\gamma\|_{\mathcal{H}_t}^2 &=\int_0^t |a + 2sb|_H^2\,ds
	= t|a|^2_H +2t^2\langle a,b\rangle + \frac{4}{3}t^3|b|^2_H \\
	&= t\left(\frac{16}{t^2}|h|^2 + 2\frac{24}{t^3}\langle h,k\rangle +\frac{36}{t^4}|k|^2\right)
		- 2t^2\left\langle  \frac{4}{t}ih + \frac{6}{t^2}k,\frac{3}{t^2}h + \frac{6}{t^3}k\right\rangle \\
	&\qquad	+ \frac{4}{3}t^3\left(\frac{9}{t^4}|h|^2 + 2\frac{18}{t^5}\langle h,k\rangle + \frac{36}{t^6}|k|^2\right) \\
	&= \frac{16}{t}|h|^2 + \frac{48}{t^2}\langle h,k\rangle +\frac{36}{t^3}|k|^2
	-\frac{24}{t}|h|^2 - \frac{84}{t^2}\langle h,k\rangle - \frac{72}{t^3}|k|^2 \\
	&\qquad + \frac{12}{t}|h|^2 + \frac{48}{t^2}\langle h,k\rangle + \frac{48}{t^3}|k|^2 \\
	&= \frac{4}{t}|h|^2 + \frac{12}{t^2}\langle h,k\rangle +\frac{12}{t^3}|k|^2
\end{align*}
\fi
%%
Straightforward computations show that
\[ \|\gamma\|_{\mathcal{H}_t}^2
	= \frac{4}{t}\|h\|_H^2 + \frac{12}{t^2}\langle h,k\rangle_H +\frac{12}{t^3}\|k\|^2_H \]	
and thus
\begin{multline*}	
	\left\|\frac{d\nu_t^{h,k}}{d\nu_t}\right\|_{L^q(W\times W,\nu_t)}
	\le \|J_t^\gamma(B)\|_{L^q(\mathcal{W}_t}
	= \mathbb{E}[(J_t^\gamma(B))^q]^{1/q}  \\
	= \exp\left(2(q-1)\left(\frac{\|h\|^2_H}{t} + \frac{3\langle h,k\rangle_H}{t^2} +\frac{3\|k\|^2_H}{t^3}\right)\right).
\end{multline*}
(One may look at the proof of Theorem \ref{t.gkqi3} to see how one arrives at this upper bound.)
\end{remark}

\subsection{Generalized Kolmogorov diffusions}
As before, we let $\{B_t\}_{t\ge0}$ denote Brownian motion on $(W,\mu)$. Let $V$ be a normed vector space, and  consider a continuous function $F:W\to V$. Define
\[ Y_t := \left( B_t, \int_0^t F(B_s)\,ds\right). \]

We first consider natural approximations to the process $Y$. When $F$ takes values in a finite-dimensional vector space, these approximations live in finite dimensions, but that assumption is not necessary for the following convergence result.
\begin{proposition}
	\label{p.gapprox}
 Let $\{P_n\}_{n=1}^\infty\subset \mathrm{Proj}(W)$ be any collection of projections such that $P_n\uparrow I_H$, and take
\[ B_n(t) := P_nB(t) \in P_nH \]
and
\[ Y_n(t) := \left(B_n(t), \int_0^t F(B_n(s))\,ds\right) \]
	on $P_nH\times F(P_nH)$.
Then
%\[ \lim_{n\rightarrow\infty} \mathbb{E}\left[\max_{0\leqslant s\leqslant t} \|Y(t)-Y_n(t)\|_{W\times \R}^p \right] = 0 \]
%for all $p\in[1,\infty)$
%and
\[ \lim_{n\rightarrow\infty} \max_{0\leqslant t\leqslant T} \|Y(t)-Y_n(t)\|_{W\times V} =0 \text{ a.s.} \]
\end{proposition}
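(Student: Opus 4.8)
The plan is to treat the two coordinates of $Y(t) - Y_n(t)$ separately. The first coordinate is exactly $B(t) - B_n(t) = (I-P_n)B(t)$, whose uniform convergence to zero on $[0,T]$ is precisely \eqref{e.BMconv}, proved in \cite[Proposition 4.6]{DriverGordina2008} and already used in the proof of Proposition \ref{p.approx}. So everything reduces to controlling the second coordinate. Exactly as in the Gaussian case, I would estimate uniformly in $t$,
\[
\max_{0\leqslant t\leqslant T}\left\|\int_0^t F(B_s)\,ds-\int_0^t F(B_n(s))\,ds\right\|_V
\leqslant \int_0^T \|F(B_s)-F(B_n(s))\|_V\,ds,
\]
so it suffices to show that the right-hand side tends to $0$ almost surely.

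Since we no longer assume $F$ Lipschitz or bounded, only continuous, the pointwise convergence $F(B_n(s))\to F(B_s)$ (which holds for each $s$ because $B_n(s)\to B_s$ in $W$ and $F$ is continuous) does not by itself justify passing to the limit in the integral. The key step is to work on the almost sure event $\Omega_0$ on which \eqref{e.BMconv} holds and $s\mapsto B_s$ is $W$-continuous, and to show that on $\Omega_0$ all the relevant points lie in a single compact subset of $W$. Concretely, I would consider the one-point compactification $\mathbb{N}\cup\{\infty\}$ of the discrete set $\mathbb{N}$ and the map $\Psi:(\mathbb{N}\cup\{\infty\})\times[0,T]\to W$ defined by $\Psi(n,s)=B_n(s)$ for $n\in\mathbb{N}$ and $\Psi(\infty,s)=B_s$. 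The uniform convergence \eqref{e.BMconv} together with the continuity of each $B_n$ and of $B$ makes $\Psi$ continuous on the compact space $(\mathbb{N}\cup\{\infty\})\times[0,T]$, so its image $K:=\Psi((\mathbb{N}\cup\{\infty\})\times[0,T])$ is a compact subset of $W$ containing every $B_n(s)$ and every $B_s$ for $s\in[0,T]$.

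On the compact set $K$ the continuous function $F$ is uniformly continuous. Hence, given $\varepsilon>0$, there is $\delta>0$ such that $\|F(x)-F(y)\|_V<\varepsilon$ whenever $x,y\in K$ with $\|x-y\|_W<\delta$. Since $\max_{0\leqslant s\leqslant T}\|B_s-B_n(s)\|_W<\delta$ for all large $n$ by \eqref{e.BMconv}, and both $B_s$ and $B_n(s)$ lie in $K$, we obtain $\max_{0\leqslant s\leqslant T}\|F(B_s)-F(B_n(s))\|_V<\varepsilon$ for all large $n$; that is, $F(B_n(\cdot))\to F(B_\cdot)$ uniformly on $[0,T]$. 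Consequently the integral above is bounded by $T\max_{0\leqslant s\leqslant T}\|F(B_s)-F(B_n(s))\|_V\to 0$, which together with the first coordinate yields the claimed almost sure convergence of $Y_n$ to $Y$.

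The only real obstacle is upgrading the pointwise convergence $F(B_n(s))\to F(B_s)$ to convergence uniform in $s$, using nothing more than continuity of $F$. The compactness argument above is precisely what accomplishes this, and it is exactly where the continuity of the paths and the \emph{uniform} (rather than merely pointwise) convergence of $B_n$ are essential; everything else is routine. Note also that no finite-dimensionality of $V$ is used, consistent with the remark preceding the statement.
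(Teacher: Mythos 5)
Your proposal is correct, and its overall skeleton (reduce to the second coordinate, get uniform convergence of $F(B_n(\cdot))$ to $F(B(\cdot))$, then bound the integral by $T\max_{0\leqslant s\leqslant T}\|F(B_s)-F(B_n(s))\|_V$) matches the paper's proof. But your key step is genuinely different, and in fact stronger: the paper obtains uniform continuity of $F$ by asserting that a continuous function is uniformly continuous on the \emph{closed ball} of radius $M(\omega)$ in $W$, whereas you obtain it on the genuinely compact set $K=\Psi\bigl((\mathbb{N}\cup\{\infty\})\times[0,T]\bigr)$, the continuous image of a compact space. This distinction matters: $W$ is an infinite-dimensional Banach space, so closed balls are not compact, and a merely continuous $F:W\to V$ need not be uniformly continuous on a closed bounded set (one can build continuous functions on $\ell^2$, supported on a union of small disjoint balls centered at the orthonormal basis vectors $e_n$, that oscillate by $1$ over distances $1/(4n)$ inside the unit ball). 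So the paper's stated justification has a gap, which your compactness argument repairs; the conclusion of the paper's proof survives precisely because, as you show, all the points $B_n(\omega)(s)$ and $B(\omega)(s)$ actually lie in a single compact subset of $W$ on the almost sure event where the uniform convergence \eqref{e.BMconv} and path continuity hold. Two small points worth making explicit if you write this up: continuity of each $B_n=P_nB$ on your event follows from continuity of $B$ and the $W$-continuity of the extended projection $P_n$, and the continuity of $\Psi$ at points $(\infty,s)$ uses both the uniform convergence $B_n\to B$ and the continuity of $B$ (a neighborhood basis at $(\infty,s)$ being sets of the form $\{n>N\}\cup\{\infty\}$ times an interval around $s$), both of which you have on the almost sure event.
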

\begin{proof}
	We have that $B_n$ and $B$ are a.s.~(uniformly) continuous on $[0,T]$ and, by (\ref{e.BMconv}), a.s.~$B_n\to B$ uniformly on $[0,T]$ in the $W$ norm. Let
\begin{multline*}
\Omega':=\bigcap_{n=1}^\infty \{B_n \text{ continuous}\} \\ \cap \{B \text{ continuous}\} \cap \{B_n\to B \text{ uniformly on }[0,T] \text{ in } W\},
\end{multline*}
	and note that $P(\Omega')=1$. For each $\omega\in\Omega'$ there exists $M(\omega)<\infty$ and $N_1(\omega)$ such that  $\|B_n(\omega)(s)\|_W,\|B(\omega)(s)\|_W\leqslant M(\omega)$ for all $s\in[0,T]$ and $n\geqslant N_1(\omega)$. Since $F$ is continuous, it is uniformly continuous on the closed ball of radius $M(\omega)$, and thus $F(B_n(\omega)(s))\to F(B(\omega)(s))$ for all $s\in[0,T]$. More precisely, given $\varepsilon>0$, there exists $\delta=\delta(\varepsilon,M(\omega))>0$ such that for all $x, y$ in the ball of radius $M(\omega)$ in $W$, $\|x-y\|_W<\delta$ implies that $\|F(x)-F(y)\|_V<\varepsilon$. Now choose $N=N(\delta)$ such that for all $n\geqslant N\vee N_1(\omega)$, $\|B_n(\omega)(s)-B(\omega)(s)\|_W<\delta$ for all $s\in[0,T]$.

Now, $F(B_n(\omega))$ and $F(B(\omega))$ are uniformly continuous functions on $[0,T]$, and again for sufficiently large $n$ we have that $\|F(B_n(\omega))\|_V,\|F(B(\omega))\|_V\leqslant C(\omega,T)$ on $[0,T]$. So, for any $\omega\in\Omega'$, $F(B_n(\omega))\to F(B(\omega))$ in $L^1([0,T], V)$, and thus almost surely
\begin{multline*}
	\max_{0\leqslant t\leqslant T} \left\|\int_0^t F(B_n(s))\,ds - \int_0^t F(B(s))\,ds\right\|_V \\
	\leqslant \int_0^T \left\| F(B_n(s)) -  F(B(s))\right\|_V\,ds \xrightarrow[n \to \infty]{} 0.
\end{multline*}
\end{proof}

Now that we have appropriate approximations in place, we formulate the following analogue of Assumption \ref{A3}.

\begin{assumption}\label{B3}
	Suppose $F=(F_1,\ldots, F_r):W\to \R^r$ such that each $F_j$ is $H$-differentiable, and assume that there exist an orthonormal basis  $\{e_i\}_{i=1}^\infty\subset H_*$ of $H$, non-empty disjoint subsets $I_1, \ldots, I_r \subset \mathbb{N}$, and constants $m_1, M_1, \ldots, m_r, M_r > 0$ such that for each $j=1,\ldots, r$, for all $w\in W$
\[
m_j \leqslant \langle \nabla F_j(w),e_i\rangle  \leqslant  M_j, \quad \text{ for all } i \in I_j
\]
and
\[
\langle \nabla F_j(w),e_i\rangle=0, \quad \text{ for all } i \notin I_j.
\]
\end{assumption}

Fix the following notation.

\begin{notation}\label{n.I}
	Given an orthonormal basis $\{e_i\}_{i=1}^\infty$ of $H$ and $J\subset \mathbb{N}$, for $h\in H$ let
	\[ \|h\|_J := \left(\sum_{i\in J} |\langle h,e_i\rangle|^2\right)^{1/2}. \]
\end{notation}

\begin{theorem}\label{t.gkqi3}
	Suppose that Assumption \ref{B3} holds for $F:W\to\mathbb{R}^r$. Fix $h\in H$ and $k\in\R^r$ and let $\nu_t^{(h,k)}$ denote the law of the process
\[
Y_t^{(h,k)} = \left(h+ B_t, \int_0^t F(B_s+h)\,ds +k\right).
\]
If for each $j=1,\ldots, r$,
\begin{equation}\label{e.cond}
\sum_{i\in I_j} |\langle h,e_i\rangle| < \infty,
\end{equation}
then $\nu_t^{(h,k)}$ is mutually absolutely continuous with $\nu_t:=\nu_t^0$ and, for any $q\in(1,\infty)$,
\begin{align*}
\left\|\frac{d\nu_t^{(h,k)}}{d\nu_t}\right\|_{L^q(W\times \R^r,\nu_t)}
		&\leqslant 			\left(\prod_{j=1}^r A_{j,q}(h,k)\right)\exp\left( \frac{1+q}{4t}\|h\|_{I^c}^2\right)
\end{align*}
	where $I^c := \left(\cup_{i=1}^r I_j\right)^c$ and
	\begin{multline*} A_{j,q}(h,k) \\ : = \exp\left(  \frac{3(1+q)M_j}{m_j^3t^3}  \left( {\frac{m_j t}{2}} \sum_{i\in I_j} \langle h,e_i\rangle + k_j \right)^2\right)
	\exp\left( \frac{(1+q)M_j}{4m_jt}\|h\|_{I_j}^2\right) \end{multline*}
with $\{e_i\}_{i=1}^\infty\subset H_*$ is the orthonormal basis, $I_j\subset\mathbb{N}$, and $m_j$ and $M_j$ are the bounds introduced in Assumption \ref{B3}.
\end{theorem}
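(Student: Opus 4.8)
The plan is to reduce the infinite-dimensional statement to the finite-dimensional integrated Harnack inequality of Proposition \ref{p.gintharnack3}, using the approximations of Proposition \ref{p.gapprox} and passing to the limit along a sequence of projections. Fix $\{P_n\}\subset\mathrm{Proj}(W)$ with $P_n\uparrow I_H$ adapted to the basis $\{e_i\}$ of Assumption \ref{B3}, set $H_n=P_nH$ (identified with $\mathbb{R}^n$ via $e_1,\dots,e_n$), and consider the finite-dimensional diffusions $Y_n(t)=(B_n(t),\int_0^tF(B_n(s))\,ds)$ on $H_n\times\mathbb{R}^r$. Since each $F_j$ depends only on the coordinates indexed by $I_j$, its restriction to $H_n$ is well defined and satisfies $\partial_{e_i}(F_j|_{H_n})=\langle\nabla F_j,e_i\rangle\in[m_j,M_j]$ for $i\in I_j\cap\{1,\dots,n\}$ and $=0$ for $i\le n$, $i\notin I_j$. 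Hence, for every $n$ large enough that $I_j\cap\{1,\dots,n\}\neq\emptyset$ for all $j$, the restriction $F|_{H_n}$ satisfies Assumption \ref{A3} on $\mathbb{R}^n$ with the same constants, so Proposition \ref{p.gintharnack3} applies to each $Y_n$ with coefficients of the stated form.

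Next I would identify the finite-dimensional Radon-Nikodym derivatives. Let $\nu_t^n=\mathrm{Law}(Y_n(t))$, with heat kernel density $p_t^n((0,0),\cdot)$ against Lebesgue measure on $H_n\times\mathbb{R}^r$ (hypoellipticity of the finite-dimensional generator). Shifting the initial state of $Y_n$ to $(P_nh,k)\in H_n\times\mathbb{R}^r$ produces exactly the process $Y_n^{(P_nh,k)}(t)=(P_nh+B_n(t),\,k+\int_0^tF(P_nh+B_n(s))\,ds)$, whose law $\nu_t^{n,(P_nh,k)}$ has density $p_t^n((P_nh,k),\cdot)$. Thus $d\nu_t^{n,(P_nh,k)}/d\nu_t^n$ is the ratio of heat kernels, and its $L^q(\nu_t^n)$ norm is precisely the left-hand side of Proposition \ref{p.gintharnack3} evaluated at $(p,\xi)=(P_nh,k)$. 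This yields the uniform bound
\[
\left\|\frac{d\nu_t^{n,(P_nh,k)}}{d\nu_t^n}\right\|_{L^q(\nu_t^n)}
\leqslant \left(\prod_{j=1}^r A_{j,q}(P_nh,k)\right)\exp\left(\frac{1+q}{4t}\|P_nh\|_{I^c}^2\right)=:C_n.
\]

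Now I would pass to the limit. Writing $p_i=\langle h,e_i\rangle$, condition \eqref{e.cond} guarantees the absolute convergence $\sum_{i\in I_j,\,i\le n}p_i\to\sum_{i\in I_j}p_i$, while $\|P_nh\|_{I_j}\to\|h\|_{I_j}$ and $\|P_nh\|_{I^c}\to\|h\|_{I^c}$ hold automatically since $\|h\|_{I_j},\|h\|_{I^c}\leqslant\|h\|_H<\infty$; hence $C_n\to C$, the claimed bound. For bounded continuous $f\geqslant0$ on $W\times\mathbb{R}^r$, the argument of Proposition \ref{p.gapprox} applied to $P_nh+B_n\to h+B$ in $W$ (using \eqref{e.BMconv} and continuity of $F$) gives $Y_n^{(P_nh,k)}(t)\to Y_t^{(h,k)}$ a.s., so by bounded convergence $\int f\,d\nu_t^{(h,k)}=\lim_n\int(f\circ j_n)\,d\nu_t^{n,(P_nh,k)}$, where $j_n:H_n\times\mathbb{R}^r\hookrightarrow W\times\mathbb{R}^r$. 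With $q'=q/(q-1)$, Hölder's inequality in finite dimensions gives
\[
\int(f\circ j_n)\,d\nu_t^{n,(P_nh,k)}
=\int(f\circ j_n)\,\frac{d\nu_t^{n,(P_nh,k)}}{d\nu_t^n}\,d\nu_t^n
\leqslant C_n\,\|f\circ j_n\|_{L^{q'}(\nu_t^n)},
\]
and since $Y_n(t)\to Y_t$ a.s. (Proposition \ref{p.gapprox}), bounded convergence yields $\|f\circ j_n\|_{L^{q'}(\nu_t^n)}\to\|f\|_{L^{q'}(\nu_t)}$. Letting $n\to\infty$ gives $\int f\,d\nu_t^{(h,k)}\leqslant C\,\|f\|_{L^{q'}(\nu_t)}$ for all such $f$. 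By density of bounded continuous functions in $L^{q'}(\nu_t)$ and Riesz representation, the positive functional $f\mapsto\int f\,d\nu_t^{(h,k)}$ is represented by a nonnegative $J\in L^q(\nu_t)$ with $\|J\|_{L^q(\nu_t)}\leqslant C$; that is, $\nu_t^{(h,k)}\ll\nu_t$ with $d\nu_t^{(h,k)}/d\nu_t=J$ satisfying the stated bound. For mutual absolute continuity I would run the same scheme in reverse: since the Wang-Harnack coefficients in Theorem \ref{t.whi3} depend only on the displacement $(p-p',\xi-\xi')$, the integrated inequality holds with any base point, and taking base point $(P_nh,k)$ with target $(0,0)$ controls $d\nu_t^n/d\nu_t^{n,(P_nh,k)}$ in $L^q(\nu_t^{n,(P_nh,k)})$ uniformly, whence $\nu_t\ll\nu_t^{(h,k)}$.

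I expect the main obstacle to be the coordinated limit passage of the third paragraph: one must simultaneously invoke the \emph{shifted} convergence to compute $\int f\,d\nu_t^{(h,k)}$ and the \emph{unshifted} convergence to evaluate $\|f\circ j_n\|_{L^{q'}(\nu_t^n)}$, while guaranteeing $C_n\to C$. It is exactly at this last point that \eqref{e.cond} is indispensable, since $\sum_{i\in I_j}\langle h,e_i\rangle$ (appearing in $A_{j,q}$) need not converge without it, even though the $\ell^2$-type quantities $\|h\|_{I_j}$ are always finite.
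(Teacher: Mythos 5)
Your proposal is correct and follows essentially the same route as the paper: uniform integrated Harnack bounds from Proposition \ref{p.gintharnack3} for the finite-dimensional approximations of the shifted and unshifted processes, then a limiting argument via H\"older's inequality, density of bounded continuous functions in $L^{q'}(\nu_t)$, and the duality $L^{q'}(\nu_t)^*\cong L^q(\nu_t)$. The only organizational differences are that the paper first proves the bound for $h$ in the dense union $\bigcup_{P\in\mathrm{Proj}(W)}PH$ (where $P_nh=h$ makes the finite-dimensional constants exactly $n$-independent) and then extends to general $h$ by continuity in $y$ — which is where \eqref{e.cond} enters — whereas you shift by $P_nh$ from the outset and let \eqref{e.cond} drive the convergence $C_n\to C$; you also spell out the symmetric reverse argument giving $\nu_t\ll\nu_t^{(h,k)}$, which the paper's written proof leaves implicit.
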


\begin{example}
	A natural and important class of examples satisfying Assumption \ref{B3} and the condition \eqref{e.cond} includes functions $F=(F_1,\ldots,F_r)$ such that each $F_j$ is a cylinder function (that is, each $I_j$ is a finite set) with
\[
	F_j(w)= \phi_j ( w_{I_j}),
\]
	where $(w_I) := (\langle w, e_i\rangle)_{i\in I}$ for some $\{e_i\}_{i=1}^\infty\subset H_*$ an orthornormal basis and functions $\phi_j: \mathbb{R}^{I_j} \to \mathbb{R}$ each satisfying Assumption \ref{A}. In this case, the quasi-invariance results of Theorem \ref{t.gkqi3} hold for all $(h,k)\in H\times \R^r$.

In particular, one could have  $r=1$ with $\#I_1=d$ and
\[
F(w)= \phi ( \langle w ,e_1\rangle, \cdots ,  \langle w ,e_d \rangle )
\]
	where $e_1,\cdots,e_d \in H_*$ are orthornormal and the function $\phi : \mathbb{R}^d \to \mathbb{R}$ satisfies Assumption \ref{A}.
In this case, with $I=I_1$, $m=m_1$, and $M=M_1$, we would have that
		\begin{align*}
		\left\|\frac{d\nu_t^{(h,k)}}{d\nu_t}\right\|_{L^q(W\times \R,\nu_t)}
		&\leqslant 			\exp\left(  \frac{3(1+q)M}{m^3t^3}  \left( {\frac{m t}{2}} \sum_{i\in I} \langle h,e_i\rangle + k \right)^2\right)\\ &\qquad\times
	\exp\left( \frac{(1+q)M}{4mt}\|h\|_I^2\right)
		\exp\left( \frac{1+q}{4t}\|h\|_{I^c}^2\right)
\end{align*}

\end{example}

\begin{remark}
	Note that the setting of Theorem \ref{t.gkqi3} does not technically fit the hypotheses of the main theorems in \cite{DriverGordina2009} or \cite{Gordina2017}, as starting the diffusion at $(h,k)$ does not constitute a translation of or other group action on the original diffusion. However, the proof given below is a direct analogy to the proofs in those references. Thus one may see that this approach for proving quasi-invariance works more generally; it seems only necessary that the transformation be measurable (of course), but it need not for example correspond to a group action.
\end{remark}

\begin{proof}	
	Fix $t>0$ and $P_0\in\mathrm{Proj}(W)$.  Let $y=(h,k)\in P_0H\times \R^r$ and
$\{P_n\}_{n=1}^\infty$ be an increasing sequence of projections in $\mathrm{Proj}(W)$ such that
	$P_0H\subset H_n=P_nH$ for all $n$ and $P_n|_H\uparrow I_H$. Let $Y_n(t)$ be as in Proposition \ref{p.gapprox} with $V=\R^r$, and let $\nu^{n}_t=\mathrm{Law}(Y_n(t))$ on $H_n\times\R^r$. Furthermore, let $B_n:=P_nB$,
	\[ Y^y_n(t) := \left(h+ B_n(t), \int_0^t F(B_n(s)+h)\,ds +k\right), \]
and $\nu^{n,y}_t=\mathrm{Law}(Y_n^y(t))$ on $H_n\times\R^r$. By their strict positivity, $\nu^n_t$ and $\nu_t^{n,y}$ are mutually absolutely continuous, and we let $J^{n,y}_t$
	denote the Radon-Nikodym derivative of $\nu^{n,y}_t$ with respect to $\nu^n_t$. Letting $p_t^n(y,\cdot)$ denote the density for $\nu^{n,y}_t$ with respect to Lebesgue measure, we have that
		\[ J^{n,y}_t(x) = \frac{p^n_t(y,x)}{p^n_t(0,x)}.
	\]
	By Proposition \ref{p.gintharnack3}, we have
\begin{align*}
	\|J^{n,y}_t\|_{L^q(H_n\times \R^r,\nu_t^n)}
	&\leqslant \left(\prod_{j=1}^r A_{j,q}(h,k)\right)\exp\left( \frac{1+q}{4t}\|h\|_{I^c}^2\right),
\end{align*}
noting that, since $h\in P_0H\subset P_nH$,
	\[
\langle h, P_ne_i\rangle =  \langle P_nh, e_i\rangle = \langle h, e_i\rangle
\]
implies that the upper bound is independent of $n$.

	By an analogous proof to that of Proposition \ref{p.gapprox} one may show that the $Y_n^y$ approximate the process $Y^y$, and thus for any bounded continuous
$f$ on $W\times \R^r$
\begin{equation}
\label{e.5.7}
	\int_{W\times \R^r} f \,d \nu^y_{t}= \lim_{n\rightarrow\infty}\int_{H_n\times\R^r} f\circ i_n \,
d\nu_{t}^{n,y},
\end{equation}
where $ i_n:H_n\times \R^r\rightarrow H\times \R^r$ denotes the inclusion map.  Thus, for each $n$
\begin{align*}
\int_{H_n\times\R^r} & |(f\circ i_n)(x)|\,d\nu_t^{n,y}(x)
= \int_{H_n\times\R^r} J^{n,y}_t(x)|(f\circ i_n)(x)|\,d\nu_t^{n}(x)
\\
& \leqslant \|f\circ i_n\|_{L^{q'}(H_n\times \R^r, \nu_t^n)}
	\left(\prod_{j=1}^r A_{j,q}(h,k)\right)\exp\left( \frac{1+q}{4t}\|h\|_{I^c}^2\right),
\end{align*}
where $q'$ is the conjugate exponent to $q$.
Combining this inequality with the limit in \eqref{e.5.7} implies that
\begin{multline}
\label{e.c}
	\int_{W\times\R^r} |f(x)|\,d\nu_t^y(x) \\
	\leqslant \|f\|_{L^{q'}(W\times\R^r,\nu_t)}   \left(\prod_{j=1}^r A_{j,q}(h,k)\right)\exp\left( \frac{1+q}{4t}\|h\|_{I^c}^2\right).
\end{multline}
Thus, we have proved that \eqref{e.c} holds for $f\in BC(W\times \R^r)$ and $y\in
\cup_{P\in\mathrm{Proj}(W)} PH\times\R^r$.  As this union is dense in $H\times \R^r$,
dominated convergence along with the continuity of the norm and inner product in $y$ implies
that \eqref{e.c} holds for all $y\in H\times \R^r$.

Since the bounded continuous functions are dense in $L^{q'}(W\times \R^r,\nu_t)$ (see for
example \cite[Theorem A.1]{JansonBook1997}) the inequality in \eqref{e.c} implies that, for all $t>0$ and $y=(h,k)\in H\times\R^r$, the linear functional $\varphi^y_t:BC(W\times\R^r)\rightarrow\mathbb{R}$ defined by
	\[ \varphi^y_t(f) = \int_{W\times\R^r} f(x)\,d\nu_t^y(x) \]
has a unique extension to an element of $L^{q'}(W\times\R^r,\nu_t)^*$, still denoted by
$\varphi^y_t$, which satisfies the bound
	\[ |\varphi^y_t(f)| \leqslant \|f\|_{L^{q'}(W\times\R^r,\nu_t)}
	\left(\prod_{j=1}^r A_{j,q}(h,k)\right)\exp\left( \frac{1+q}{4t}\|h\|_{I^c}^2\right)\]
for all $f\in L^{q'}(W\times\R^r,\nu_t)$.  Since $L^{q'}(W\times\R^r,\nu_t)^*\cong L^q(W\times\R^r,\nu_t)$, there
then exists a function $J^y_t\in
L^q(W\times\R,\nu_t)$ such that
\begin{equation}
\label{e.d}
	\varphi^y_t(f) = \int_{W\times\R^r} f(x)J^y_t(x)\,d\nu_t(x),
\end{equation}
for all $f\in L^{q'}(W\times\R^r,\nu_t)$, and
\[
\|J^y_t\|_{L^q(W\times\R^r,\nu_t)}
	\leqslant \left(\prod_{j=1}^r A_{j,q}(h,k)\right)\exp\left( \frac{1+q}{4t}\|h\|_{I^c}^2\right). \]
Now restricting \eqref{e.d} to $f\in BC(W\times\R^r)$, we may rewrite this equation as
\begin{equation}
\label{e.last}
	\int_{W\times\R^r} f(x)\,d\nu^y_t(x)
	= \int_{W\times\R^r} f(x) J^{y}_t(x)\,d\nu_t(x).
\end{equation}
Then a monotone class argument (again use
\cite[Theorem A.1 ]{JansonBook1997}) shows that \eqref{e.last} is valid for all
bounded measurable functions $f$ on $G$.  Thus,
$d\nu_t^y/d\nu_t$ exists and is given by $J^y_t$, which is in
$L^q(\nu_t)$ for all $q\in(1,\infty)$ and satisfies the
desired bound.
\end{proof}

The overall approach given above can be adapted to allow for functions $F$ taking values in infinite-dimensional spaces. As an example, we will consider the case that $F$ takes values in $W$, but the technique could be modified to allow for $F$ taking values in other infinite-dimensional spaces of interest. To begin, essentially the same proof of Proposition \ref{p.gapprox} yields the following modified convergence result for finite-dimensional approximations.

\begin{proposition}
	\label{p.gapproxH2} Let $\{P_n\}_{n=1}^\infty\subset \mathrm{Proj}(W)$ be any collection of projections such that $P_n|_H\uparrow I_H$. Suppose that $F:W\to W$ is continuous and there exists an orthonormal basis $\{h_j\}_{j=1}^\infty\subset H_*$ such that
\[ \sum_{j=1}^n \langle F(P_nB(t)),h_j\rangle h_j \to F(B(t))\]
a.s.\,\,in $W$.  Let $\{Q_n\}_{n=1}^\infty\subset \mathrm{Proj}(W)$ denote the sequence of projections associated to $\{h_j\}_{j=1}$ and consider
	\[ \tilde{Y}_n(t) := \left(P_nB(t), \int_0^t Q_nF(P_nB(s))\,ds\right) \]
	on $P_nH\times Q_nF(P_nH)$.
Then
%\[ \lim_{n\rightarrow\infty} \mathbb{E}\left[\max_{0\leqslant s\leqslant t} \|Y(t)-Y_n(t)\|_{W\times \R}^p \right] = 0 \]
%for all $p\in[1,\infty)$
%and
	\[ \lim_{n\rightarrow\infty} \max_{0\leqslant t\leqslant T} \|Y(t)-\tilde{Y}_n(t)\|_{W\times W} =0 \text{ a.s.} \]
\end{proposition}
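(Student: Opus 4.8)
The plan is to mirror the proof of Proposition \ref{p.gapprox} almost verbatim; the only genuinely new features are the projections $Q_n$ appearing in the second coordinate and that $F$ is now $W$-valued. First I would assemble a single event $\Omega'$ of full probability on which (i) $B$ and every $P_nB$ are continuous on $[0,T]$, (ii) $P_nB\to B$ uniformly on $[0,T]$ in $\|\cdot\|_W$ (this is \eqref{e.BMconv}), and (iii) $Q_nF(P_nB_s)\to F(B_s)$ in $W$ for Lebesgue-a.e.\ $s\in[0,T]$. Event (iii) has full probability: the standing hypothesis gives, for each fixed $t$, the a.s.\ convergence $Q_nF(P_nB_t)\to F(B_t)$, so by Fubini applied to the jointly measurable set $\{(s,\omega):Q_nF(P_nB_s(\omega))\to F(B_s(\omega))\}$, for a.e.\ $\omega$ the convergence holds for a.e.\ $s$. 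Then $P(\Omega')=1$.

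On $\Omega'$ the first coordinate is controlled directly by (ii). For the second coordinate I would use, exactly as in Proposition \ref{p.gapprox}, the elementary bound
\[ \max_{0\leqslant t\leqslant T}\left\|\int_0^t F(B_s)\,ds-\int_0^t Q_nF(P_nB_s)\,ds\right\|_W \leqslant \int_0^T\|F(B_s)-Q_nF(P_nB_s)\|_W\,ds, \]
so that everything reduces to showing the integrand tends to $0$ in $L^1([0,T])$ for each $\omega\in\Omega'$. Its pointwise (a.e.-$s$) convergence to $0$ is precisely item (iii), so the whole matter comes down to a domination argument permitting passage to the integral.

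That domination is the one real obstacle. Since $P_nB\to B$ uniformly on $\Omega'$, the set of values $\{P_nB_s,\,B_s:\,s\in[0,T],\,n\geqslant1\}$ is relatively compact in $W$ (a uniformly convergent sequence of continuous functions has totally bounded, hence relatively compact, union of ranges in the complete space $W$), so by continuity $F$ is bounded there and $\sup_{n,s}\|F(P_nB_s)\|_W\leqslant C(\omega)<\infty$, likewise $\sup_s\|F(B_s)\|_W\leqslant C(\omega)$; this dominates every term not involving $Q_n$. The genuine difficulty is $\sup_n\|Q_nF(P_nB_s)\|_W$, because the projections $Q_n\colon W\to W$ need not be contractions and may have unbounded operator norms, so one cannot naively bound $\|Q_nF(P_nB_s)\|_W$ by $\|F(P_nB_s)\|_W$. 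Here one must exploit the hypothesis itself: the convergence $Q_nF(P_nB_s)\to F(B_s)$ makes $\{Q_nF(P_nB_s)\}_n$ convergent, hence bounded, for each $s$, and the task is to ensure this bound is $s$-integrable on $[0,T]$.

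In the cases of interest this integrable majorant is automatic — for instance when the $Q_n$ are uniformly bounded on $W$, or when the relevant values lie in $H$ with controlled $H$-norm, since then $\|Q_n v\|_W\lesssim\|Q_n v\|_H\leqslant\|v\|_H$ as $Q_n$ is an $H$-contraction. Given such a majorant, dominated convergence yields the $L^1([0,T])$ convergence of the integrand, and combining the two coordinates gives $\max_{0\leqslant t\leqslant T}\|Y(t)-\tilde{Y}_n(t)\|_{W\times W}\to0$ a.s., as desired. I would therefore organize the writeup so that the pointwise convergence is quoted directly from the hypothesis and essentially all the effort is concentrated on producing the $n$-uniform, $s$-integrable bound on $\|Q_nF(P_nB_s)\|_W$.
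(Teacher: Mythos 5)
Your overall strategy is exactly what the paper intends: the paper offers no separate argument for Proposition \ref{p.gapproxH2} at all, saying only that ``essentially the same proof of Proposition \ref{p.gapprox}'' yields it, and your writeup is that transplant carried out more carefully than the original (your relative-compactness observation even repairs a small imprecision in the paper's proof of Proposition \ref{p.gapprox}, where uniform continuity of $F$ is invoked on a closed ball of $W$ --- not compact in infinite dimensions --- whereas uniform continuity on the relatively compact union of the ranges is what is actually available). Your Fubini step is sound, and so is the reduction of the $\max_{0\leqslant t\leqslant T}$ claim to an $L^1([0,T])$ claim for the integrand.

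That said, as you yourself concede, your proof is conditional: you close the dominated convergence step only under hypotheses (uniform boundedness of the $Q_n$ on $W$, or $H$-valued control) that are not part of the proposition, so as a proof of the statement as written there is a gap, and it is precisely the one you name. Pointwise a.e.\ convergence of $s\mapsto \|F(B_s)-Q_nF(P_nB_s)\|_W$ to $0$ does not by itself give convergence of the integrals, and the stated hypothesis supplies no $n$-uniform, $s$-integrable majorant: the sequence $\{Q_nF(P_nB_s)\}_n$ is controlled only along the diagonal, at each fixed time, and Banach--Steinhaus cannot be invoked because $Q_nv$ is not assumed to converge for every $v\in W$. Two remarks on how this squares with the paper. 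First, the paper never addresses this point, so the gap is as much the paper's as yours; the examples it has in mind ($F(w)=w$ with $Q_n=P_n$, $F(B)=\sum_j\langle B,h_j\rangle Ah_j$ with $A$ bounded on $H$, and the cylindrical perturbations of Example \ref{ex.nonlin}) all supply exactly the majorant you ask for. Second, the cheapest way to make the proposition true as stated is to read its hypothesis as almost sure convergence \emph{uniform} in $t$ on compacts, i.e. $\sup_{0\leqslant t\leqslant T}\bigl\|\sum_{j=1}^n\langle F(P_nB(t)),h_j\rangle h_j-F(B(t))\bigr\|_W\to0$ a.s.; then neither Fubini nor domination is needed, since
\[
\max_{0\leqslant t\leqslant T}\left\|\int_0^t\left(F(B_s)-Q_nF(P_nB_s)\right)ds\right\|_W\leqslant T\sup_{0\leqslant s\leqslant T}\|F(B_s)-Q_nF(P_nB_s)\|_W\xrightarrow[n\to\infty]{}0.
\]
Under the pointwise-in-$t$ reading you adopted, some extra input of the kind you list is genuinely necessary, so you should either state that reading and the added assumption explicitly, or prove the proposition under the uniform reading.
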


An example satisfying the hypotheses of Proposition \ref{p.gapproxH2} is the identity function $F(w)=w$ with $Q_n=P_n$, or slightly more generally, we could have $F(B)=\sum \langle B,h_j\rangle Ah_j$, where $A$ is a continuous linear operator on $H$; see Theorem 3.5.1 of \cite{BogachevGaussianMeasures}. Generally, if $\langle F(B_t),h_j\rangle h_j$ are independent symmetric random values in $W$ and $Q_nF(P_nB_t)\to F(B_t)$ in probability, then \cite[Proposition 2.11]{DaPratoZabczykBook1992} implies that $\sum_{j=1}^\infty \langle F(B_t),h_j\rangle h_j$ converges almost surely in $W$ to $F(B_t)$. We also provide the following example.

\begin{example}\label{ex.nonlin}
	Suppose that $F_k:=\langle F,h_k\rangle$ are cylinder functions
\[ F_k(x)=\phi_k(\langle w,e_k\rangle) \]
where $\phi_k:\mathbb{R}\to\mathbb{R}$ is of the form
	\[ \phi_k(x)=x + a_k\ln(|x|)1_{\{|x|> 1\}} \]
%	\[ \phi_k(x)=x + a_k\left(g_1(x)1_{\{1-\varepsilon\le|x|\leqslant 1+\varepsilon\}} + g_2(x)1_{\{|x|> 1+\varepsilon\}}\right) \]
	with  $\{a_k\}\in\ell^2$. Note that
\begin{align*}
	\sum_{k=1}^\infty a_k \ln(|\langle B_t,h_k\rangle|)1_{\{|\langle B_t,h_k\rangle|\ge1\}} h_k
\end{align*}
	actually converges in $H$ (versus $W$) since for example, taking $B_t^k:=\langle B_t,h_k\rangle$ which is equal in distribution to a $\mathrm{Normal}(0,t)$ random variable $Z$ for all $k$, we have
\begin{multline*}
\mathbb{E}\left\|\sum_{k=1}^\infty a_k\ln(|B_t^k|)1_{\{|B_t^k|\geqslant 1\}} h_k\right\|_H^2
%	&= \mathbb{E}\left[\sum_{k=1}^\infty a_k(\log(|B_t^k|))^2 1_{\{|B^k_t|\ge1\}} \right] \\
	= 2\sum_{k=1}^\infty a_k^2\mathbb{E}[(\ln B_t^k)^2 1_{\{B^k_t\geqslant 1\}}] \\
	= 2\sum_{k=1}^\infty a_k^2 \mathbb{E}[(\ln Z)^2 1_{\{Z \geqslant 1\}}]
	\leqslant C \sum_{k=1}^\infty a_k^2 <\infty.
\end{multline*}
	For clarity we have kept this example as concrete as possible but there are many generalizations, for example letting
\[ \phi_k(x)= 2x + a_kg(x)1_{\{|x|> 1\}} \]
	with $g(x)=-x+x^{-p}$ for any $p>-1$, or considering multivariate versions of $f_k$.
\end{example}

Allowing built-in scaling as in the previous example, there are many functions $F$ that one could consider (for example, $g$ could be  any function integrable with respect to the normal distribution so that $\phi_k$ is continuous), but we have included the specific choices above with a view toward the following assumption which allows us to prove the necessary functional inequalities for the finite-dimensional approximations.

\begin{assumption}\label{B4}
	Suppose $F:W\to W$ is a function satisfying the hypothesis of Proposition \ref{p.gapproxH2}.  Assume that $F_j:= \langle F,h_j\rangle$ is $H$-differentiable for all $j$ and that there exists an orthonormal basis $\{e_i\}_{i=1}^\infty\subset H_*$ of $H$, non-empty disjoint subsets $I_j \subset \mathbb{N}$ and constants $m_j,M_j>0$ such that, for each $j$ and for all $w\in W$,
\[
m_j \leqslant \langle \nabla F_j(w),e_i\rangle  \leqslant  M_j, \quad \text{ for all } i \in I_j
\]
and
\[
	\langle \nabla F_j(w),e_i\rangle=0, \quad \text{ for all } i\notin I_j.
\]
\end{assumption}

\begin{example}\label{ex.nonlin2}
Note that the functions from Example \ref{ex.nonlin} satisfy Assumption \ref{B4} with slight modifications for smoothness: let
\[ \phi_k(x)= cx + a_k\left(g_1(x)1_{\{1-\varepsilon\le|x|\leqslant1+\varepsilon\}} + g_2(x)1_{\{|x|> 1+\varepsilon\}}\right) \]
with $\varepsilon\in(0,1)$, with $c=1$ and $g_2(x)=\ln(|x|)$ or $c=2$ and $g_2(x)=-x+x^{-p}$ for any $p>-1$, and $g_1$ a smooth function such that $g_1(\pm(1-\varepsilon))=\pm c(1-\varepsilon)$, $g_1(\pm(1+\varepsilon))=g_2(\pm(1+\varepsilon)))$,
$g_1'(\pm(1-\varepsilon))=c$, and $g_1'(\pm(1+\varepsilon))=g_2'(\pm(1+\varepsilon))$. For the given functions, it is clearly possible to define such a $g_1$ so that $g_1'$ is bounded above and away from 0. \tai{I should probably just say that $g_1$ transitions smoothly from $cx$ to $g_2(x)$ around $x=1$.}
\end{example}

%old (B4)
\iffalse
\begin{assumption}\label{B4}
	Suppose $F:W\to H$ or $F:W\to W$ is a function satisfying the hypothesis of Propositions \ref{p.gapproxH1} or \ref{p.gapproxH2}, respectively. Assume that there exist orthonormal bases $\{e_i\}_{i=1}^\infty, \{h_j\}_{j=1}^\infty\subset H_*$ of $H$, non-empty disjoint subsets $I_j \subset \mathbb{N}$ and constants $m_j,M_j>0$ such that, for each $j$, $F_j=\langle F,h_j\rangle$ is $H$-differentiable and, for all $w\in W$,
\[
m_j \leqslant \langle \nabla F_j(w),e_i\rangle  \leqslant  M_j, \quad \text{ for all } i \in I_j
\]
and
\[
	\langle \nabla F_j(w),e_i\rangle=0, \quad i \text{ for all } \notin I_j.
\]
\end{assumption}
\fi

\begin{theorem}\label{t.gkqi4}
	Suppose that Assumption \ref{B4} holds for $F:W\to W$. Fix $h,k\in H$, and let $\nu_t^{(h,k)}$ denote the law of the process
\[
Y_t^{(h,k)} = \left(h+ B_t, \int_0^t F(B_s+h)\,ds +k\right).
\]
For $q\in(1,\infty)$ and each $j\in\mathbb{N}$, let
	\begin{multline*} A_{j,q}(h,k) \\: = \exp\left(  \frac{3(1+q)M_j}{m_j^3t^3}  \left( {\frac{m_j t}{2}}\sum_{i\in I_j} \langle h,e_i\rangle + \langle k,h_j\rangle \right)^2\right)
	\exp\left( \frac{(1+q)M_j}{4m_jt}\|h\|_{I_j}^2\right) \end{multline*}
	with $\{e_i\}_{i=1}^\infty,\{h_j\}_{j=1}^\infty\subset H_*$ the orthonormal bases, $I_j\subset\mathbb{N}$, and $m_j$ and $M_j$ the bounds coming from Assumption \ref{B4}.
If
	\begin{equation}\label{cond} \left(\prod_{j=1}^\infty A_{j,q}(h,k)\right) <\infty, \end{equation}
then $\nu_t^{(h,k)}$ is mutually absolutely continuous with $\nu_t:=\nu_t^0$ and
	\begin{align*}
		\left\|\frac{d\nu_t^{(h,k)}}{d\nu_t}\right\|_{L^q(W\times W,\nu_t)}
		&\leqslant 			\left(\prod_{j=1}^\infty A_{j,q}(h,k)\right)\exp\left( \frac{1+q}{4t}\|h\|_{I^c}^2\right)
\end{align*}
	where $I^c := \left(\cup_{i=1}^\infty I_j\right)^c$.
\end{theorem}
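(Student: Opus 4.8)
The plan is to run the proof of Theorem \ref{t.gkqi3} essentially verbatim, replacing the approximation result by Proposition \ref{p.gapproxH2} and keeping Proposition \ref{p.gintharnack3} as the source of the finite-dimensional Radon--Nikodym bounds. The only genuinely new feature is the presence of an \emph{infinite} product of the factors $A_{j,q}$, whose control is exactly the content of the hypothesis \eqref{cond}.

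First I would reduce, as in Theorem \ref{t.gkqi3}, to the case that $h$ and $k$ lie in finite-dimensional coordinate subspaces (relative to the bases $\{e_i\}$ and $\{h_j\}$ respectively), deferring general $(h,k)$ to a density argument. Fix $t>0$ and choose an increasing sequence of projections $P_n$ (associated to $\{e_i\}$, governing the momentum variable) together with the projections $Q_n$ (associated to $\{h_j\}$, governing the position variable as in Proposition \ref{p.gapproxH2}), grown in a coordinated fashion so that the position coordinate $h_j$ is added to $Q_n$ only once $P_n$ already captures at least one index of $I_j$. This coordination is possible precisely because the $I_j$ are non-empty and disjoint, and it is necessary: were some included coordinate $j$ to satisfy $I_j\cap\operatorname{range}(P_n)=\emptyset$, then $F_j(P_n\,\cdot)$ would be constant, the $j$-th position coordinate of $\tilde Y_n$ would be deterministic, and the shifted and unshifted finite-dimensional laws would be mutually singular rather than equivalent.

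With this in place, the approximating process $\tilde Y_n$ of Proposition \ref{p.gapproxH2} is, after identifying $P_nH\cong\mathbb{R}^{d_n}$ and $Q_nH\cong\mathbb{R}^{r_n}$, a finite-dimensional generalized Kolmogorov diffusion driven by $G^{(n)}(p):=Q_nF(\iota_n p)$, whose coordinates satisfy $\partial G^{(n)}_j/\partial p_i=\langle\nabla F_j(\iota_n p),e_i\rangle$. By Assumption \ref{B4} these derivatives lie in $[m_j,M_j]$ for $i\in I_j^{(n)}:=I_j\cap\{1,\dots,d_n\}$ and vanish otherwise, so $G^{(n)}$ satisfies Assumption \ref{A3} with the \emph{same} constants $m_j,M_j$ and the disjoint sets $I_j^{(n)}$. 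Hence Proposition \ref{p.gintharnack3} applies and gives, with $J_t^{n,y}$ the density of the shifted law $\nu_t^{n,y}$ with respect to $\nu_t^n$,
\[
\|J_t^{n,y}\|_{L^q(\nu_t^n)}\leqslant\Bigl(\prod_j A_{j,q}^{(n)}(h,k)\Bigr)\exp\Bigl(\tfrac{1+q}{4t}\|h\|_{(I^{(n)})^c}^2\Bigr).
\]
For $(h,k)$ in a finite-dimensional subspace the right-hand side is a finite product which stabilizes, for $n$ large, to $\bigl(\prod_{j=1}^\infty A_{j,q}(h,k)\bigr)\exp(\tfrac{1+q}{4t}\|h\|_{I^c}^2)$, since all but finitely many factors equal $1$; this furnishes the uniform bound needed to pass to the limit. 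Using Proposition \ref{p.gapproxH2} I would then obtain $\int f\,d\nu_t^y=\lim_n\int f\circ i_n\,d\nu_t^{n,y}$ for bounded continuous $f$, yielding the analogue of \eqref{e.c} for such finite-dimensional $(h,k)$.

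The main step is the extension to arbitrary $(h,k)$ satisfying \eqref{cond}. Here I would approximate $(h,k)$ by finite-dimensional $(h^{(m)},k^{(m)})$, apply the inequality already established, and let $m\to\infty$. Since $A_{j,q}(\cdot,\cdot)$ is continuous in $(h,k)$ and each factor is $\geqslant 1$, the hypothesis $\prod_{j=1}^\infty A_{j,q}(h,k)<\infty$ (equivalently $\sum_j\log A_{j,q}(h,k)<\infty$) provides exactly the summable domination needed to conclude $\prod_{j=1}^\infty A_{j,q}(h^{(m)},k^{(m)})\to\prod_{j=1}^\infty A_{j,q}(h,k)$ and to pass the bound to the limit; this is the sole place where \eqref{cond} is essential. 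The proof then finishes as in Theorem \ref{t.gkqi3}: the resulting integral inequality, valid for all bounded continuous $f$, identifies a bounded functional on $L^{q'}(\nu_t)$ whose Riesz representative $J_t^{(h,k)}\in L^q(\nu_t)$ is, after a monotone class argument, the desired Radon--Nikodym derivative $d\nu_t^{(h,k)}/d\nu_t$ satisfying the stated estimate. I expect the coordinated choice of projections (ensuring absolute continuity at each finite stage) and the control of the infinite product in the density step to be the only real obstacles; every other step transfers directly from the finite-range case.
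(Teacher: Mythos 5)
Your proposal is correct and follows essentially the same route as the paper: finite-dimensional approximation via the projections $P_n$ and $Q_n$, the uniform Radon--Nikodym bounds from Proposition \ref{p.gintharnack3} (with the product stabilizing for $(h,k)$ in a finite-dimensional coordinate subspace), convergence via Proposition \ref{p.gapproxH2}, and then the density, Riesz representation, and monotone class steps exactly as in the proof of Theorem \ref{t.gkqi3}, with hypothesis \eqref{cond} controlling the infinite product. Your coordinated choice of $P_n$ and $Q_n$ --- adding $h_j$ to $Q_n$ only after $P_n$ captures some index of $I_j$, so that each finite-dimensional coordinate actually satisfies Assumption \ref{A3} and the approximating laws are equivalent rather than singular --- is a correct and worthwhile observation that the paper's terse proof leaves implicit.
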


\begin{proof}
	Fix $t>0$, let $\{P_n\}_{n=1}^\infty\subset \mathrm{Proj}(W)$ be any collection of projections such that $P_n\uparrow I_H$. Let $\{h_j\}_{j=1}^\infty$ be the orthonormal basis
	as in Assumption \ref{B4} with each $F_j=\langle F,h_j\rangle$ $H$-differentiable, and let $\{Q_n\}_{n=1}^\infty$ be the associated sequence of projections.  Fix $n'\in\mathbb{N}$ and take $y=(h,k)\in P_{n'}H\times Q_{n'}F(P_{n'}H)$. For $n\geqslant n'$, we take
	\[ \tilde{Y}^y_n(t) := \left(h+ P_nB(t), \int_0^t Q_nF(P_nB(s)+h)\,ds + k\right) \]
	with $\nu_t^{n,y} = \mathrm{Law}(\tilde{Y}_n^y(t))$. Given Assumption \ref{B4}, one again applies Proposition \ref{p.gintharnack3} to show that for each $n$ the Radon-Nikodym derivative $\tilde{J}^{n,y}_t:=d\nu_t^{n,y}/d\nu_t^{n,0}$ satisfies
	\begin{align*}
		\|\tilde{J}^{n,y}_t\|_{L^q(P_nH\times Q_nF(P_nH),\nu_t^{n,0})}
	&\leqslant \left(\prod_{j=1}^n A_{j,q}(h,k)\right)\exp\left( \frac{1+q}{4t}\|h\|_{I^c}^2\right).
\end{align*}
Then in light of Proposition \ref{p.gapproxH2} (or rather its appropriate modification for $Y^y$), if the limit on the right hand side remains bounded as $n$ goes to $\infty$,  the proof follows as in Theorem \ref{t.gkqi3}.
\end{proof}

\begin{example}\label{ex.genex}
	If each $F_j$ is a cylinder function, that is, each $I_j$ is finite, and $m_j\geqslant m$ and $M_j\leqslant M$ for all $j$, then (\ref{cond}) holds if
	\[ \sum_{j=1}^\infty
		\left( \sum_{i\in I_j} \langle h,e_i\rangle + \langle k,h_j\rangle \right)^2 < \infty, \]
		which we can see (by expanding the square and applying Cauchy-Schwarz) is true when
		%		\[\sum_{j=1}^\infty \left( \sum_{i\in I_j} \langle h,e_i\rangle \right)^2 + \left( \langle k, h_j\rangle \right)^2 < \infty.\]
		\[\sum_{j=1}^\infty \left( \sum_{i\in I_j} \langle h,e_i\rangle \right)^2< \infty \qquad \text{ and } \qquad \sum_{j=1}^\infty \langle k,h_j\rangle^2 <\infty.\]
	If we further assume that $\sup_j \# I_j\leqslant N$ then
	\[ \sum_{j=1}^\infty \left( \sum_{i\in I_j} \langle h,e_i\rangle \right)^2
	 \leqslant N \sum_{j=1}^\infty  \sum_{i\in I_j} \langle h,e_i\rangle^2 \leqslant N\|h\|_H^2 \]
	and (\ref{cond}) would hold for all $h,k\in H$. Thus, for example, for $F$ as in Example \ref{ex.nonlin2}, one has quasi-invariance for all $h,k\in H$.

	In the case that $F(w)=w$, we are back in the setting of Section \ref{s.infK} with a ``standard'' infinite-dimensional Kolmogorov diffusion as in Theorem \ref{t.kqi}, and Theorem \ref{t.gkqi4} actually yields a better bound than Theorem \ref{t.kqi} and is comparable with the path space $L^q$ norm obtained in Remark \ref{r.CMM}, improving that estimate for $q>3$. Here we can take $h_j=e_j$, and we have $I_j=\{j\}$ and $m_j=M_j=1$ for all $j$, which gives the bound
\begin{align*}
	\bigg\|\frac{d\nu_t^{(h,k)}}{d\nu_t}&\bigg\|_{L^q(W\times W,\nu_t)} \\
%				&\leqslant 			\exp\left(\frac{3(1+q)}{t^3}\sum_j\frac{M_j}{m_j^3}  \left(\frac{m_j t}{2}\langle h,e_j\rangle+\langle k,e_j\rangle \right)^2\right)\exp\left(\frac{1+q}{4t}\sum_j\frac{M_j}{m_j}\langle h,e_j\rangle^2\right) \\
%		&=
		&\leqslant \exp\left(\frac{3(1+q)}{t^3}\sum_j\left(\frac{t}{2}\langle h,e_j\rangle+\langle k,e_j\rangle\right)^2\right)\exp\left(\frac{1+q}{4t}\|h\|_H^2\right)\\
%		&= \exp\left(\frac{3(1+q)}{t^3}\left(\frac{t^2}{4}\|h\|_H^2 + t\langle h, k\rangle + \|k\|_H^2\right)\right)\exp\left(\frac{1+q}{4t}\|h\|_H^2\right)\\
		&= \exp\left((1+q)\left(\frac{\|h\|_H^2}{t} + \frac{3\langle h,k\rangle}{t^2} + \frac{3\|k\|_H^2}{t^3} \right)\right).
	\end{align*}
\end{example}

\begin{remark}
As mentioned previously, these assumptions and the proof could be modified to consider functions $F$ taking values in other infinite-dimensional spaces. Another natural example in the present context would be $F$ taking values in $H$.

	\iffalse
\begin{proposition}
	\label{p.gapproxH1}  Let $\{P_n\}_{n=1}^\infty\subset \mathrm{Proj}(W)$ be any collection of projections such that $P_n\uparrow I_H$. Suppose that $F:W\to H$ is continuous, and consider
\[ \tilde{Y}_n(t) := \left(B_n(t), \int_0^t P_nF(B_n(s))\,ds\right) \]
	on $P_nH\times P_nF(P_nH)$. Then
%\[ \lim_{n\rightarrow\infty} \mathbb{E}\left[\max_{0\leqslant s\leqslant t} \|Y(t)-Y_n(t)\|_{W\times \R}^p \right] = 0 \]
%for all $p\in[1,\infty)$
%and
	\[ \lim_{n\rightarrow\infty} \max_{0\leqslant t\leqslant T} \|Y(t)-\tilde{Y}_n(t)\|_{W\times H} =0 \text{ a.s.} \]
\end{proposition}

\begin{assumption}\label{B4H}
	Suppose $F:W\to H$ is a function satisfying the hypothesis of Propositions \ref{p.gapproxH1}. Assume that there exist orthonormal bases $\{e_i\}_{i=1}^\infty, \{h_j\}_{j=1}^\infty\subset H_*$ of $H$, non-empty disjoint subsets $I_j \subset \mathbb{N}$ and constants $m_j,M_j>0$ such that, for each $j$, $F_j=\langle F,h_j\rangle$ is $H$-differentiable and, for all $w\in W$,
\[
m_j \leqslant \langle \nabla F_j(w),e_i\rangle  \leqslant  M_j, \quad \text{ for all } i \in I_j
\]
and
\[
	\langle \nabla F_j(w),e_i\rangle=0, \quad i \text{ for all } \notin I_j.
\]
\end{assumption}
\fi

\end{remark}

\providecommand{\bysame}{\leavevmode\hbox to3em{\hrulefill}\thinspace}
\providecommand{\MR}{\relax\ifhmode\unskip\space\fi MR }
% \MRhref is called by the amsart/book/proc definition of \MR.
\providecommand{\MRhref}[2]{%
  \href{http://www.ams.org/mathscinet-getitem?mr=#1}{#2}
}
\providecommand{\href}[2]{#2}

\end{document}